\theoremstyle{definition}
\newtheorem{definition}{Definition}[section]
\newtheorem{construction}[definition]{Construction}
\newtheorem{problem}[definition]{Problem}
\theoremstyle{plain}
\newtheorem{theorem}[definition]{Theorem}
\newtheorem{lemma}[definition]{Lemma}
\newtheorem{proposition}[definition]{Proposition}
\newtheorem{claim}[definition]{Claim}
\def \l {\ell}
\def \L {L}
\def \C {\mathcal{C}}
\newcommand{\Cm}[1][\l]{\mathcal{C}^-_{#1}}
\newcommand{\fCm}[1][\le\L]{\bm{\mathcal{C}}^-_{#1}}
\newcommand{\ifCm}{\bm{\mathcal{C}}^-}
\newcommand{\ifB}{\bm{\mathcal{B}}}
\newcommand{\fB}[1][\L]{\bm{\mathcal{B}}_{\le #1}}
\newcommand{\fT}{\bm{T}}
\def \F {\mathcal{F}}
\def \fF {\bm{\mathcal{F}}}
\def \K {\mathcal{K}}
\def \E {\mathcal{E}}
\def \mP {\mathcal{P}}
\def \mH {\mathcal{H}}
\def \mHp {\mH'}
\def \fmH {\bm{\mathcal{H}}}
\def \R {\mathbb{R}}
\def \Z {\mathbb{Z}}
\def \tri{\triangle}
\def \ex {\mathrm{ex}}
\def \sm {\setminus}
\def \ce {\coloneqq}
\renewcommand{\le}{\leqslant}
\renewcommand{\ge}{\geqslant}
\renewcommand{\geq}{\geqslant}
\def \eps {\varepsilon}
\def \es {\varnothing}
\renewcommand \b[2] {\binom{#1}{#2}}
\title{Tur{\'a}n density of long tight cycle \\minus one hyperedge}
\author{
J\'ozsef Balogh\thanks{Department of Mathematics, University of Illinois at Urbana-Champaign, Urbana, Illinois 61801, USA. E-mail: \texttt{\{jobal, haoranl8\}@illinois.edu}.} \thanks{Research is partially supported by NSF grants DMS-1764123, RTG DMS-1937241 and FRG DMS-2152488, the Arnold O. Beckman Research Award (UIUC Campus Research Board RB 22000), and the Langan Scholar Fund (UIUC).}
\and
Haoran Luo\footnotemark[1] \thanks{Research is partially supported by FRG DMS-2152488, the Arnold O. Beckman Research Award (UIUC Campus Research Board RB 22000).}
}
\date{}
\begin{document}
\maketitle

\begin{abstract}
Denote by $\Cm$ the $3$-uniform hypergraph obtained by removing one hyperedge from the tight cycle on $\l$ vertices.
It is conjectured that the Tur\'an density of $\Cm[5]$ is $1/4$.
In this paper, we make progress toward this conjecture by proving that the Tur\'an density of $\Cm$ is $1/4$,
for every sufficiently large $\l$ not divisible by $3$.
One of the main ingredients of our proof is a forbidden-subhypergraph characterization of the hypergraphs, for which there exists a tournament on the same vertex set such that every hyperedge is a cyclic triangle in this tournament.

A byproduct  of our method is a human-checkable proof for the upper bound on the maximum number of almost similar triangles in a planar point set, which was recently proved using the method of flag algebras by Balogh, Clemen, and Lidick\'y.
\end{abstract}

\section{Introduction} \label{sec::Int}
For a collection $\fF$ of $r$-uniform hypergraphs (\emph{$r$-graphs}), we say that an $r$-graph $\mH$ is \emph{$\fF$-free} or \emph{free of $\fF$}, if $\mH$ contains no $\F \in \fF$ as a subhypergraph. The \emph{Tur\'an number} $\ex(n,\fF)$ is defined to be the maximum number of $r$-edges an $n$-vertex $\fF$-free $r$-graph can have. To determine $\ex(n,\fF)$ is a central problem in Extremal Combinatorics, but also notoriously hard when $r \ge 3$, where even the \emph{Tur\'an density} $\pi(\fF) \ce \lim_{n\to \infty} \ex(n,\fF) / \b{n}{r}$ is only known for a few $\fF$'s.\footnote{When $\fF = \{\F\}$, we use $\ex(n,\F)$ and $\pi(\F)$ for $\ex(n,\{\F\})$ and $\pi(\{\F\})$, respectively.} For example, let $\K_4$ be the complete $3$-graph on $4$ vertices and $\K_4^-$ be the $3$-graph obtained by removing one hyperedge from $\K_4$. It has been a long-standing open problem to determine $\pi(\K_4)$ and $\pi(\K_4^-)$.

A related family that has also received extensive attention for Tur\'an density problems is the tight cycles. For every integer $\l \ge 4$,
let $\C_\l$ be the \emph{tight cycle} of size $\l$, i.e., it has vertex set $\{0,1,\ldots, \l-1\}$ and hyperedges $\{ \{i,i+1,i+2 \pmod{\l}\}: 0\le i \le \l-1\}$, and let $\Cm$ be the \emph{tight cycle minus one hyperedge} of size $\l$, i.e., it is obtained from $\C_\l$ by removing the hyperedge $\{\l-1, 0, 1\}$. Note that $\C_4 = \K_4$ and $\Cm[4] = \K_4^-$. When $\l$ is a multiple of $3$, $\C_\l$ is tripartite, so, by a classical result of Erd\H{o}s~\cite{erdos1964extremal}, $\pi(\Cm) = \pi(\C_\l) = 0$.
It is conjectured that $\pi(\C_5) = 2\sqrt{3} -3$ and $\pi(\Cm[5]) = 1/4$, see ~\cite{mubayi2011hypergraph}.
In this paper, we make progress toward the latter conjecture by proving the following theorem.

\begin{theorem} \label{thm::main}
There is a constant $\L$ such that $\pi(\Cm) = 1/4$, for every $\l > L$ not divisible by $3$.
\end{theorem}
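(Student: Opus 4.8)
First I would establish $\pi(\Cm) \ge 1/4$ by exhibiting a $\Cm$-free construction with edge density approaching $1/4$. The natural candidate is an iterated/blow-up construction built from a tournament: take a random tournament $T$ on $[n]$, and let the hyperedges be all triples that form a cyclic triangle in $T$; a random tournament has roughly $\frac14\binom n3$ cyclic triangles. One checks that such a hypergraph contains no long $\Cm$: a tight cycle minus an edge, drawn on a tournament's cyclic triangles, would force a long "consistently oriented" structure that a tournament cannot sustain once $\l$ is large — this is exactly where the hypothesis $3 \nmid \l$ and the forbidden-subhypergraph characterization promised in the abstract enter. So the lower bound is the easy half, modulo checking the construction is genuinely $\Cm$-free for large $\l\not\equiv 0 \pmod 3$.

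\textbf{The upper bound: setup.} For the upper bound $\pi(\Cm)\le 1/4$, I would argue by contradiction using the hypergraph removal lemma / regularity method for $3$-graphs. Suppose $\mH$ is an $n$-vertex $\Cm$-free $3$-graph with at least $(1/4+\eps)\binom n3$ edges. Apply hypergraph regularity to get a reduced/cluster structure; since $\mH$ is $\Cm$-free and $\l$ can be taken as large as we like relative to the regularity parameters, standard embedding/counting lemmas let us assume $\mH$ (or a large dense subhypergraph on a typical cluster) is essentially free of \emph{all} $\C_{\l'}^-$ for a window of lengths $\l'$, and in particular behaves like a hypergraph with no short tight path that can be closed up. The goal is to deduce a rigid structural constraint: the link graphs must be very sparse or very structured.

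\textbf{The upper bound: the tournament characterization.} The crux is the forbidden-subhypergraph characterization advertised in the abstract: a $3$-graph all of whose hyperedges are cyclic triangles of some tournament on the same vertex set is characterized by finitely many forbidden subhypergraphs, and $\Cm$ for large $\l\not\equiv 0$ is (up to those finitely many exceptions) exactly such a forbidden configuration, or forces one. I would use this to show that a $\Cm$-free $\mH$ of density near $1/4$ must, after cleaning, have all its edges lying among the cyclic triangles of a tournament $T$ on $V(\mH)$ — and then a counting argument (a tournament has at most $\frac14\binom n3 + O(n^2)$ cyclic triangles, since $\sum_v \binom{d^+(v)}{2} \ge n\binom{(n-1)/2}{2}$ bounds the non-cyclic ones from below) yields the contradiction with the $(1/4+\eps)$ density. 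The main obstacle — and where I expect the real work to be — is this reduction: turning "$\mH$ is $\Cm$-free and dense" into "$\mH$ embeds into the cyclic-triangle hypergraph of a tournament." This requires the stability/robustness of the tournament characterization under the lossy passage through regularity (one cannot literally forbid long cycles in a bounded reduced hypergraph), so I anticipate needing a bootstrapping argument: first get approximate structure from density, then use $\Cm$-freeness at many scales to promote it to exact tournament structure, handling the $3 \nmid \l$ divisibility obstruction explicitly (a tripartite-type configuration is the enemy when $3 \mid \l$, which is why the theorem excludes it). The byproduct about almost-similar triangles suggests the tournament-characterization lemma is the reusable engine, so I would isolate and prove that first, then feed it into the regularity argument.
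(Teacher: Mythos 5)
Your proposal has two genuine gaps, one in each half. The lower bound construction fails: the cyclic-triangle $3$-graph $\mH(T)$ of a (random) tournament is \emph{not} $\Cm$-free. Indeed $\Cm$ is itself orientable: orient $v_0\to v_1\to\cdots\to v_{\l-1}$ with the back edges $v_{i+2}\to v_i$, and the removed hyperedge $\{\l-1,0,1\}$ is precisely what leaves the pairs $\{v_{\l-1},v_0\}$ and $\{v_0,v_{\l-2}\}$ unconstrained, so the last hyperedge $\{\l-2,\l-1,0\}$ can be closed cyclically. Hence any tournament containing this orientation as a subtournament --- in particular a random tournament, with high probability --- produces copies of $\Cm$ among its cyclic triangles. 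The correct extremal example is the iterated blow-up of a single hyperedge ($\E_n$), which is $\Cm$-free for $3\nmid\l$ by a mod-$3$ colouring argument. Relatedly, your reading of the tournament characterization is backwards: orientability does not exclude $\Cm$ (so it cannot serve as a lower-bound device), and the forbidden family characterizing orientability (the ``bottles'') is infinite, not finite; what is true is that freeness of \emph{all} pseudo-cycles minus one hyperedge of length $\not\equiv 0\pmod 3$ up to a threshold forces orientability.

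For the upper bound, even granting a reduction to an orientable hypergraph, ``clean up, orient, and count cyclic triangles'' cannot give the exact value $1/4$ for a fixed $\l$. The passage from freeness of the single $\Cm$ to freeness of a family $\fCm$ of short (pseudo-)cycles minus an edge goes through supersaturation and blow-ups and requires $\l\ge 2\L-3$, while the cleaning step that upgrades $\fCm$-freeness to freeness of $\ifCm$ (and hence orientability) costs $\Theta\bigl(n^3/\sqrt{\L}\bigr)$ hyperedges; since $\L$ is tied to the fixed $\l$, this route only yields $\pi(\Cm)\le 1/4+O(1/\sqrt{\l})$, not $1/4$. Closing that gap is the actual content of the paper: a stability theorem for $\Cm[5]$-free orientable $3$-graphs (proved via a removal lemma for tournaments and an analysis of the tournament $D_5$), then a symmetrization and cleaning argument showing that a maximum $\fCm$-free $3$-graph has an exact recursive tripartite structure, an induction giving $\ex(n,\fCm)\le n^3/24+M^2n$, and only then the blow-up transfer $\pi(\Cm)\le\pi(\fCm)$. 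None of this is present in your sketch, and the regularity step you invoke (``assume $\mH$ is essentially free of all $\C^-_{\l'}$ in a window of lengths'') is not a consequence of $\Cm$-freeness by itself; the correct mechanism is that blow-ups of each shorter pseudo-cycle minus one hyperedge contain $\Cm$, so $\Cm$-freeness forces few copies of each such configuration, which the removal lemma then deletes.
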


Our key method for proving Theorem~\ref{thm::main} is to reduce this hypergraph Tur\'an problem to a counting problem in tournaments, which is in general much easier to deal with than hypergraphs.
We note that a similar framework is used by Kam{\v{c}}ev, Letzter, and Pokrovskiy~\cite{kamvcev2022turan} for the Tur\'an density of $\C_\l$ for sufficiently large $\l$ not divisible by $3$. Some of our ideas and lemmas are partially inspired by them. We also note that Piga, Sales, and Sch{\"u}lke~\cite{piga2022codegree} recently proved that the codegree Tur\'an density of $\Cm$ is $0$, for every $\l \ge 5$.

We now give an outline of the proof for Theorem~\ref{thm::main}.
The lower bound in Theorem~\ref{thm::main} follows from the following construction, which is usually called the iterated blow-up of a hyperedge. It is also conjectured to be the extremal construction for $\Cm[5]$, see Section~2.5 in~\cite{mubayi2011hypergraph}.
\begin{construction} \label{con::iteEdge}
Define $3$-graphs $\E_n$ by induction. The vertex set of $\E_n$ is $\{1,2,\ldots, n\}$, which is partitioned into three parts $V_1,V_2,V_3$ with sizes $\lfloor n/3 \rfloor$, $\lfloor(n+1)/3 \rfloor$, and $\lfloor (n+2)/3 \rfloor$, respectively. $\E_n$ contains no hyperedge when $n = 1$ or $2$. For $n \ge 3$, $\E_n$ contains all the hyperedges with exactly one vertex in each $V_i$, and $V_i$ spans a copy of $\E_{|V_i|}$ for $1\le i \le 3$.
\end{construction}
It is easy to check that $\Cm \not \subseteq \E_n$, when $\l \ge 4$ and $3 \nmid \l$.
A standard induction shows that $\E_n$ has at least $n^3/24 - Cn \log n$ hyperedges for some constant $C> 0$, see also Section 1 in~\cite{barany2018almost}.

For the upper bound in Theorem~\ref{thm::main}, we will first work on the Tur\'an problem of \emph{pseudo-cycles}, which are, roughly speaking, tight cycles with repeated vertices allowed. See Definition~\ref{def::pseCyc} for a rigorous definition.
The key step is to connect this problem with counting the number of cyclic triangles in tournaments. We introduce the following notion.
\begin{definition}
A $3$-graph $\mH$ is \emph{orientable} if there is a tournament $T$ on the same vertex set such that every hyperedge in $\mH$ is a cyclic triangle in $T$.
\end{definition}
For example, it can be checked that $\C_5$ is orientable, but $\K_4^-$ is not.
We remark that the connection between $3$-graphs and tournaments has been noticed decades ago, which can be traced back to the work of Erd\H{o}s and Hajnal~\cite{erdos1972ramsey} in 1972.
For example, orientable $3$-graphs serve as the constructions for the lower bound of the codegree Tur\'an density of $\K_4^-$, see~\cite{falgas2023codegree}, and
the uniform Tur\'an density of $\K_4^-$, see~\cite{glebov2016problem} and~\cite{reiher2018turan}.
See also~\cite{reiher2018generalisation} for a generalization of orientable $3$-graphs to $r$-graphs with $r \ge 4$.

We will prove that a $3$-graph $\mH$ is orientable if and only if it is free of a certain family of hypergraphs, which we call \emph{bottles}, see Proposition~\ref{pro::ori}. Using this characterization of orientable $3$-graphs, we prove that a $3$-graph is orientable if it is free of all the pseudo-cycles minus one hyperedge with length not divisible by $3$, see Lemma~\ref{lem::freifCmOri}.
Then, by analyzing tournaments, we are able to prove a stability result for $\Cm[5]$-free orientable hypergraphs, which says that the vertex set of an almost maximum $\Cm[5]$-free orientable $3$-graph can be partitioned into three parts with almost equal sizes such that there are very few \emph{bad} hyperedges, i.e., the hyperedges with two vertices in a part and one vertex in another part, see Proposition~\ref{pro::sta}.
Building on this structure, a cleaning argument in Section~\ref{sec::Pro} shows that the maximum $3$-graphs free of the pseudo-cycles minus one hyperedge with length not divisible by $3$ and less than a fixed large constant indeed contain no bad hyperedge, from which we can easily prove that such hypergraphs have edge density at most $1/4+ o(1)$.
Finally, a standard technique using blow-ups gives Theorem~\ref{thm::main}.

As a direct application of Theorem~\ref{thm::main}, we give a human-checkable answer to the following question about the maximum number of almost similar triangles in a planar point set. For a triangle $\tri$ with angles $0<a_1 \le a_2 \le a_3<180^\circ$, we say that another triangle $\tri'$ with angles $0<a_1' \le a_2'\le a_3'<180^\circ$ is \emph{$\eps$-similar} to $\tri$ if $|a_i - a_i'| \le \eps$ for $i = 1,2,3$. Inspired by the work of Conway, Croft, Erd\H{o}s, and Guy~\cite{conway1979distribution} about the distribution of angles determined by a planar set, B\'ar\'any and F\"uredi~\cite{barany2018almost} studied $h(n,\tri,\eps)$, the maximum number of triangles that are $\eps$-similar to $\tri$ in a planar set of $n$ points.
They~\cite{barany2018almost} proved that $h(\tri,\eps)\ce \lim_{n\to \infty} h(n,\tri,\eps)/n^3$ exists and it is at least $1/24$ for every triangle $\tri$ and $\eps >0$, see their Figure 1.
They also showed that $h(\tri,\eps) = 1/24$, when $\tri$ is the equilateral triangle and $\eps \le 1^\circ$, and $h(\tri,\eps)$ can be strictly larger for some $\tri$'s, including all the right-angled triangles.
In order to give a general upper bound for $h(n,\tri,\eps)$, they represented the shape of triangles by points in $S_{tri} \ce \{ (a_1,a_2,a_3) \in \mathbb{R}^3: a_1,a_2,a_3 > 0,\, a_1+a_2+a_3 = \pi\}$ and considered the Lebesgue measure on $S_{tri}$. In the same paper, they showed that for almost all triangles $\tri$, there exists $\eps = \eps(\tri)>0$ such that $h(n,\tri, \eps) \le 0.25108\b{n}{3}(1+o(1))$, and with the further aid of the flag algebra method developed by Razborov~\cite{razborov2007flag}, they could improve the constant to $0.25072$. Their main idea is to reduce this problem to a hypergraph Tur\'an problem, by noticing that there exists a family $\fF_{tri}$ of $3$-graphs, whose hyperedges cannot be represented by triangles $\eps$-similar to $\tri$ in any planar set, for almost all triangles $\tri$. See Definition~\ref{def::fFtri} for the rigorous definition of $\fF_{tri}$.
Extending this idea, Balogh, Clemen, and Lidick\'y~\cite{balogh2022maximum} improved this bound to $0.25$, which is best possible, by verifying that more $3$-graphs are members in $\fF_{tri}$ and using flag algebra and the stability method.
\begin{theorem}[Theorem 1.3 in~\cite{balogh2022maximum}] \label{thm::triangle}
For almost all triangles $\tri$, there exists $\eps = \eps(\tri)>0$ such that $h(\tri,\eps) \le 1/4$.
\end{theorem}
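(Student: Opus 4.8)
The plan is to feed Theorem~\ref{thm::main} into the hypergraph-Tur\'an reduction of B\'ar\'any and F\"uredi. Recall that for almost every triangle $\tri$ there are a constant $\eps = \eps(\tri) > 0$ and a family $\fF_{tri}$ of $3$-graphs (Definition~\ref{def::fFtri}), no member of which occurs as a subhypergraph of the $3$-graph on any planar point set whose hyperedges are the $\eps$-similar triangles of $\tri$; consequently $h(n,\tri,\eps) \le \ex(n,\fF_{tri})$, and so $h(\tri,\eps) \le \pi(\fF_{tri})$. It therefore suffices to show that, for almost all $\tri$, the family $\fF_{tri}$ contains $\Cm$ for some $\l > \L$ with $3 \nmid \l$: fixing one such $\l$, Theorem~\ref{thm::main} gives $h(\tri,\eps) \le \pi(\fF_{tri}) \le \pi(\Cm) = 1/4$. (A more hands-on variant would try to place all sufficiently long pseudo-cycles minus one hyperedge of length not divisible by $3$ into $\fF_{tri}$ and then combine Lemma~\ref{lem::freifCmOri} with the fact that an orientable $3$-graph has at most $(1+o(1))\binom{n}{3}/4$ hyperedges --- the largest number of cyclic triangles in a tournament on $n$ vertices; the route through Theorem~\ref{thm::main} only needs a single honest copy of $\Cm$.)

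To prove the membership claim I would chase directed angles around the ``strip'' picture of $\Cm$. An embedding realizes the $\l-1$ hyperedges of $\Cm$ as a chain of triangles, successive ones overlapping in a pair of vertices, each exactly similar to $\tri = (\alpha,\beta,\gamma)$ in the limit $\eps \to 0$; since the only hyperedge of $\C_\l$ absent from $\Cm$ is $\{\l-1,0,1\}$, this chain is all but closed, so it must wrap around with its two ends essentially meeting at vertex $0$. Because directed angles at a vertex add and the three directed angles of a triangle sum to $\pm\pi$, going once around this almost-closed chain telescopes the local contributions into a relation $c_\alpha\alpha + c_\beta\beta + c_\gamma\gamma \in \pi\Z$ whose integer coefficients are bounded in terms of $\l$ alone. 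The hypothesis $3\nmid\l$ is precisely what forces every relation produced this way to be something other than a formal consequence of $\alpha + \beta + \gamma = \pi$ --- when $3 \mid \l$ the chain can close legitimately, as the iterated blow-up $\E_n$ already indicates --- so the relation genuinely constrains the shape. The set of $\tri \in S_{tri}$ obeying some nontrivial integer-linear relation among $\alpha,\beta,\gamma,\pi$ is a countable union of lines and hence null; for every $\tri$ outside it a short compactness argument upgrades ``$\Cm$ has no exact realization by $\tri$-similar triangles'' to ``$\Cm$ has no $\eps$-realization'', provided $\eps = \eps(\tri)$ is small enough, which is to say $\Cm \in \fF_{tri}$. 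This step also involves the finite bookkeeping over which angle of $\tri$ is placed at which vertex of each hyperedge and over the orientations of the chain, together with a check that it is the honest cycle-minus-edge, rather than a degenerate variant with coinciding vertices, that is being excluded.

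Putting it together: for almost all $\tri$, take $\eps(\tri)$ as above and any fixed $\l > \L$ with $3\nmid\l$ for which the membership claim holds; then $h(n,\tri,\eps) \le \ex(n,\fF_{tri}) \le \ex(n,\Cm) = (1+o(1))\binom{n}{3}/4$ by Theorem~\ref{thm::main}, that is, $h(\tri,\eps) \le 1/4$. I expect the one real obstacle to be the membership claim of the second paragraph: making the directed-angle accounting rigorous, controlling the coefficients, isolating the role of $3\nmid\l$, and passing from exact similarity to $\eps$-similarity. This is also exactly the step that renders the argument human-checkable, in place of the flag-algebra computation of Balogh, Clemen, and Lidick\'y.
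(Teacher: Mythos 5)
Your overall architecture is the same as the paper's: reduce to showing $\Cm \in \fF_{tri}$ for some admissible $\l$, then quote Theorem~\ref{thm::main} to get $h(\tri,\eps) \le \pi(\fF_{tri}) \le \pi(\Cm) = 1/4$. The gap is in your second paragraph, the membership claim itself. Because the hyperedge $\{\l-1,0,1\}$ is the one that is missing, consecutive triangles of the chain are coupled only through shared edges, and the unique global constraint on an exact realization is the vector closure $\sum_{i=0}^{\l-1} w_i = 0$, where $w_i = z_{i+1}-z_i$ and each hyperedge gives $w_{i+1} = -\lambda_i w_i$ with $\lambda_i = \frac{\sin A_i}{\sin C_i}e^{\pm \mathrm{i} B_i}$ for some assignment of the angles of $\tri$. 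So the realizability condition is the vanishing of one of finitely many expressions of the form $1+\sum_{i\ge 1}\prod_{j<i}(-\lambda_j)$, which mixes the angles with the side-length ratios $\sin A/\sin C$. Your telescoping of directed angles does not close up into a relation $c_\alpha\alpha+c_\beta\beta+c_\gamma\gamma \in \pi\Z$, precisely because the two ends of the chain are not joined by a hyperedge: there is no constraint tying $\arg(w_{\l-1})$ to $\arg(w_0)$ other than the vectorial one above. (Your mechanism does work for $\K_4^-$, where all three triangles share the vertex $2$ and the angles there add up; for $\l\ge 5$ no such vertex exists.) Consequently the exceptional set of triangles is not visibly a countable union of lines, and the ``null set of rational angle relations'' step does not finish the argument. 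To conclude almost-everywhere non-realizability from these analytic conditions you still must show that none of the finitely many constraint functions vanishes identically, i.e.\ you must exhibit at least one witness triangle admitting no realization at all --- and that is exactly the missing content, where $3\nmid\l$ has to be used.

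This witness step is how the paper proceeds: by the B\'ar\'any--F\"uredi reduction (a proper algebraic subset of the shape space is null), it suffices to find a single triangle $\tri$ such that $\mH(P,\tri)$ is $\Cm$-free for every $P$, and the paper takes the equilateral triangle, observes that a chain of exactly equilateral triangles sharing edges lies in the unit triangular lattice, and $3$-colors the lattice by $x+2y \pmod 3$ so that colors advance by $1 \bmod 3$ along the chain while every unit equilateral triangle must be rainbow; the wrap-around hyperedge $\{\l-2,\l-1,0\}$ then contradicts $3\nmid\l$ (Proposition~\ref{pro::CminfFtri}). The same citation also disposes of your deferred issues --- passing from exact to $\eps$-similarity and excluding degenerate configurations with coinciding vertices --- which your compactness sketch would otherwise have to handle (a limit of honest copies of $\Cm$ can degenerate to a pseudo-cycle minus one hyperedge, so a compactness route needs non-realizability for all such quotients, not just $\Cm$). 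As written, your proof of the key claim that $\Cm \in \fF_{tri}$ does not go through; replacing the angle-relation argument by the one-triangle reduction plus the lattice coloring (or by some other explicit witness triangle) is needed to complete it.
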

We will show in Section~\ref{sec::tri} that Theorem~\ref{thm::main} implies Theorem~\ref{thm::triangle}, by the observation that $\Cm \in \fF_{tri}$ for some large $\l$ not divisible by $3$.

The rest of this paper is organized as follows. In Section~\ref{sec::Pre}, we introduce our notation and lemmas used in our proof. In Section~\ref{sec::Cyc}, we give our forbidden-subhypergraph characterization of the orientable $3$-graphs and prove several other lemmas about tournaments. In Section~\ref{sec::Sta}, we prove our stability result. In Section~\ref{sec::Pro}, we prove Theorem~\ref{thm::main}. In Section~\ref{sec::tri}, we give our new proof for Theorem~\ref{thm::triangle}.

\section{Preliminaries} \label{sec::Pre}
For a positive integer $n$, we write $[n]$ for the set $\{1,2,\ldots, n\}$. For a set $X$ and a positive integer $k$, denote by $\b{X}{k}$ the collection of all subsets of $X$ of size $k$.
For sets $X_1, X_2, \ldots, X_k$, let $[X_1,X_2, \ldots, X_k] \ce \{ \{x_1,x_2,\ldots, x_k\} : x_i \in X_i \textrm{ for $1 \le i \le k$}\}$.

For an $r$-graph $\mH$, we use $V(\mH)$ for its vertex set and use $\mH$ to stand for its $r$-edges. In particular, $|\mH|$ denotes the number of $r$-edges in $\mH$.

Let $\mH$ be a $3$-graph.
For a vertex $v \in V(\mH)$ and two (not necessarily disjoint) sets $S_1, S_2\subseteq V(\mH)$, let $N^\mH_{S_1,S_2}(v) \ce \{ \{x,y\}: \{v,x,y\} \in \mH,\,x \in S_1,\,y \in S_2 \}$ be the \emph{link graph} of $v$ between $S_1$ and $S_2$ and $d^\mH_{S_1,S_2}(v) \ce |N^\mH_{S_1,S_2}(v)|$ be the \emph{degree} of $v$ between $S_1$ and $S_2$. When $S_1 = S_2 =S$, we write $N^\mH_{S}(v)$ for $N^{\mH}_{S_1,S_2}(v)$ and $d^\mH_S(v)$ for $d^\mH_{S_1,S_2}(v)$. Let $N^\mH(v) \ce N^\mH_{V(\mH)}(v)$ and $d^\mH(v) \ce d^\mH_{V(\mH)}(v)$.
For vertices $u, v \in V(\mH)$ and a set $S \subseteq V(\mH)$, let $N^\mH_S(u,v) \ce \{w \in S: \{u,v,w\} \in \mH\}$ be the set of \emph{neighbors} of $u,v$ in $S$ and $d^\mH_S(u,v) \ce |N^\mH_S(u,v)|$ be the \emph{codegree} of $u,v$ in $S$. Let $N^\mH(u,v) \ce N^\mH_{V(\mH)}(u,v)$ and $d^\mH(u,v) \ce d^\mH_{V(\mH)}(u,v)$. We often omit the superscript $\mH$ when it is clear from the context.
For vertex sets $S_1, S_2, S_3 \subseteq V(\mH)$, let  $\mH[S_1,S_2,S_3] \ce \mH \cap [S_1,S_2,S_3]$ be the set of hyperedges between $S_1,S_2,S_3$. Let $\mH[S_1] \ce \mH[S_1, S_1, S_1]$ and $\bar{\mH}[S_1,S_2,S_3] \ce [S_1,S_2,S_3] \sm \mH[S_1, S_2, S_3]$.
For a partition $\pi = (S_1,S_2,S_3)$ of $V(\mH)$, we write $\mH_\pi \ce \mH[S_1,S_2,S_3]$ and $\bar{\mH}_\pi \ce [S_1,S_2,S_3] \sm \mH_\pi$.

For a tournament $T$ and a vertex $v \in V(T)$, let $N^+(v) \ce \{ u \in V(G) : v \to u\}$ and let $N^-(v) \ce \{u \in V(G) : u \to v\}$ be the sets of \emph{out-neighbors} and \emph{in-neighbors} of $v$, respectively. Let $d^+(v) \ce |N^+(v)|$ and $d^-(v) \ce |N^-(v)|$ be the \emph{out-degree} and \emph{in-degree} of $v$, respectively. For vertex sets $V_1, V_2 \subseteq V(T)$, we write $V_1 \to V_2$ if $u \to v$ for every $u \in V_1$ and $v \in V_2$.

We use the following two results about triangle-free graphs.
\begin{theorem}[Mantel~\cite{mantel1907Pro}] \label{thm::Mantel}
Every $n$-vertex triangle-free graph has at most $n^2/4$ edges.
\end{theorem}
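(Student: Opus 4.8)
The plan is to use a double-counting argument combined with the Cauchy--Schwarz inequality. Let $G$ be a triangle-free graph on vertex set $[n]$ with edge set $E$, and write $d(v)$ for the degree of a vertex $v$. The key local observation is that for every edge $uv \in E$ the neighbourhoods $N(u)$ and $N(v)$ are disjoint, since a common neighbour would form a triangle with $u$ and $v$; as both neighbourhoods lie in $[n]$, this yields $d(u) + d(v) \le n$. This is the only place where triangle-freeness is used.

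Next I would sum this inequality over all edges. On the one hand $\sum_{uv \in E} (d(u) + d(v)) \le n \lvert E \rvert$. On the other hand, each vertex $v$ is counted once for every edge incident to it, contributing $d(v)$ each time, so $\sum_{uv \in E}(d(u)+d(v)) = \sum_{v} d(v)^2$. Hence $\sum_v d(v)^2 \le n\lvert E\rvert$.

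Finally I would apply Cauchy--Schwarz (equivalently, the power-mean inequality) to the degree sequence together with the handshake identity $\sum_v d(v) = 2\lvert E\rvert$, obtaining $\sum_v d(v)^2 \ge \frac{1}{n}\bigl(\sum_v d(v)\bigr)^2 = \frac{4\lvert E\rvert^2}{n}$. Combining the two displayed bounds gives $\frac{4\lvert E\rvert^2}{n} \le n\lvert E\rvert$, and therefore $\lvert E\rvert \le n^2/4$, as desired.

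There is no substantive obstacle in this argument; the only step warranting a moment's care is the local inequality $d(u)+d(v)\le n$. If one additionally wanted the sharper bound $\lfloor n^2/4\rfloor$ together with the characterization of the extremal graph as the balanced complete bipartite graph, a little extra bookkeeping (tracking the equality cases of Cauchy--Schwarz and of $d(u)+d(v)\le n$) would be needed, but for the stated bound $n^2/4$ the argument above suffices.
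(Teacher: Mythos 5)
Your proof is correct and complete: the disjointness of $N(u)$ and $N(v)$ for an edge $uv$ gives $d(u)+d(v)\le n$, summing over edges yields $\sum_v d(v)^2 \le n|E|$, and Cauchy--Schwarz with the handshake identity closes the argument. The paper itself gives no proof of this statement—it simply cites Mantel's classical theorem—so there is nothing to compare against; your argument is the standard double-counting proof and is entirely sound.
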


\begin{theorem}[Erd\H{o}s, Faudree, Pach, and Spencer, Theorem 1 in~\cite{erdos1988make}] \label{thm::manSta}
Every triangle-free graph $G$ with $n$ vertices and $m$ edges can be made bipartite by removing at most
$$
\min \left\{
\frac{m}{2} - \frac{2m(2m^2 - n^3)}{n^2(n^2-2m)}
,
m - \frac{4m^2}{n^2}
\right\}
$$
edges.
\end{theorem}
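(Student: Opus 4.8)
The plan is to produce, for each of the two quantities inside the minimum, an explicit bipartition of $V(G)$ whose number of non-crossing edges is at most that quantity; since the edges crossing a bipartition always span a bipartite graph, deleting the non-crossing edges of any bipartition makes $G$ bipartite, and taking whichever of the two bipartitions has fewer non-crossing edges yields the stated bound.

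\emph{The quantity $m-4m^2/n^2$.} Here I would range over all vertices $v$ and use the bipartition $(N(v),\,V(G)\setminus N(v))$. Triangle-freeness forces $N(v)$ to be independent, so every edge meeting $N(v)$ crosses this partition, and the non-crossing edges are precisely those lying inside $V(G)\setminus N(v)$; their number is $b(v)=m-\sum_{u\in N(v)}d(u)$. Summing over $v$, the double sum collapses as $\sum_v\sum_{u\in N(v)}d(u)=\sum_u d(u)^2$, so $\frac1n\sum_v b(v)=m-\frac1n\sum_u d(u)^2$, and Cauchy--Schwarz ($\sum_u d(u)^2\ge (2m)^2/n$) produces a vertex $v$ with $b(v)\le m-4m^2/n^2$.

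\emph{The quantity $\frac m2-\frac{2m(2m^2-n^3)}{n^2(n^2-2m)}$.} This is the delicate case, and I expect it to be the main obstacle: a uniformly random bipartition already gives $m/2$ non-crossing edges in expectation, so the entire content is in \emph{beating} $m/2$, with triangle-freeness as the only leverage. The route I would take is to analyze a \emph{maximum} cut $(A,B)$: first-order optimality ($d_A(v)\le d_B(v)$ for $v\in A$, symmetrically for $B$) only re-derives $e(A)+e(B)\le e(A,B)$, so the extra saving has to come from the structural fact that adjacent vertices in a triangle-free graph have disjoint neighborhoods — equivalently, that $G[A]$ and $G[B]$ are not merely triangle-free but sparse relative to $e(A,B)$ and to $n$. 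A concrete way to package this is to weight the neighborhood-bipartitions of the previous paragraph by $d(v)$, which gives $\min_v b(v)\le m-\frac1m\sum_{uv\in E}d(u)d(v)$, and then to feed in a lower bound on $\sum_{uv\in E}d(u)d(v)$ that improves on the general (Blakley--Roy) bound $\ge 4m^3/n^2$ by using the triangle-free inequality $\sum_{uv\in E}(d(u)+d(v))=\sum_v d(v)^2\le mn$. Either way the problem reduces to an extremal question about the degree sequence under $\sum_v d(v)=2m$ and $\sum_v d(v)^2\le mn$, whose optimum — via a smoothing or Lagrange-multiplier computation — is exactly the displayed rational function; this quantity beats $m/2$ only when $2m^2>n^3$, that is, in the dense regime, which is why the argument must exploit triangle-freeness quantitatively rather than merely qualitatively. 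Deleting the non-crossing edges of whichever of the two bipartitions is smaller completes the proof.
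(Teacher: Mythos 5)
A preliminary remark: the paper does not prove this statement at all — it is imported verbatim as Theorem~1 of Erd\H{o}s, Faudree, Pach, and Spencer~\cite{erdos1988make} — so there is no internal proof to compare against, and I am judging your argument on its own. Your treatment of the second quantity, $m-4m^2/n^2$, is correct and complete: with $b(v)=m-\sum_{u\in N(v)}d(u)$, averaging over $v$ and applying Cauchy--Schwarz to $\sum_u d(u)^2$ is exactly the standard argument, and triangle-freeness is used correctly to see that $(N(v),V\setminus N(v))$ is a valid bipartition after deleting the $b(v)$ edges inside $V\setminus N(v)$.

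The first quantity is where there is a genuine gap, and it is not merely an unfinished computation: the route you sketch cannot succeed. Your weighted inequality $\min_v b(v)\le m-\frac1m\sum_{uv\in E}d(u)d(v)$ is true, but to deduce the first bound from it you would need every triangle-free graph to satisfy $\sum_{uv\in E}d(u)d(v)\ge \frac{m^2}{2}+\frac{2m^2(2m^2-n^3)}{n^2(n^2-2m)}$, and this is false. Take $G$ to be the disjoint union of two copies of $K_{n/4,n/4}$: it is triangle-free and $(n/4)$-regular with $m=n^2/8$, and $2m^2>n^3$ once $n>32$. Being regular, it attains the Blakley--Roy bound with equality, $\sum_{uv\in E}d(u)d(v)=4m^3/n^2$, so nothing can be squeezed out of the degree sequence; concretely, every single-vertex neighborhood cut leaves exactly $b(v)=m/2$ uncut edges, whereas the first quantity for this graph equals $\frac{5n^2}{96}+\frac{n}{3}<\frac{n^2}{16}=\frac m2$. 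Hence no argument built only on the cuts $(N(v),V\setminus N(v))$, however weighted, reaches the first bound, and your claimed ``reduction to an extremal question about the degree sequence'' is not a reduction at all, since $\sum_{uv\in E}d(u)d(v)$ is not a function of the degree sequence; the max-cut observation is, as you yourself note, vacuous. The missing idea is to seed the bipartition with \emph{two} neighborhoods at once: for an edge $xy$ of a triangle-free graph, $N(x)$ and $N(y)$ are disjoint independent sets; place them on opposite sides and assign every remaining vertex to a side uniformly at random, so the expected number of uncut edges is at most $\frac12\bigl(m-e(N(x),N(y))\bigr)$. One then needs an edge with $e(N(x),N(y))\ge \frac{4m(2m^2-n^3)}{n^2(n^2-2m)}$, which comes from a codegree/four-cycle count: the $\sum_v\binom{d(v)}{2}$ paths of length two all have non-adjacent endpoints, so they are distributed over only $\binom n2-m$ pairs, which is where the factor $n^2-2m$ enters. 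In the example above this two-neighborhood cut leaves only about $m/4$ uncut edges — precisely the gain your construction cannot see.
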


For a $3$-graph $\mH$ and a positive integer $t$, the \emph{$t$-blow-up} $\mH[t]$ is the $3$-graph on vertex set $V(\mH) \times [t]$
with hyperedges $\{ \{(v_1,t_1),(v_2,t_2),(v_3,t_3)\}: \{v_1,v_2,v_3\} \in \mH, 1\le t_1,t_2,t_3 \le t  \}$.
For a family of $3$-graphs $\fmH = \{\mH_1,\ldots,\mH_h\}$, let $\fmH[t] \ce \{\mH_1[t],\ldots,\mH_h[t]\}$.

\begin{theorem}[See Section 2 in~\cite{keevash2011hypergraph}] \label{thm::turanBlowup} For every family of $3$-graphs $\fmH$ and positive integer $t$, we have
$\pi(\fmH[t]) = \pi(\fmH)$.
\end{theorem}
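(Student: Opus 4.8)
The plan is to prove the two inequalities $\pi(\fmH)\le\pi(\fmH[t])$ and $\pi(\fmH[t])\le\pi(\fmH)$ separately. The first is immediate: for each $\mH_i\in\fmH$ the diagonal embedding $v\mapsto(v,1)$ realizes $\mH_i$ as a subhypergraph of $\mH_i[t]$, so every $\fmH$-free $3$-graph is also $\fmH[t]$-free; hence $\ex(n,\fmH)\le\ex(n,\fmH[t])$ for all $n$, and the inequality follows on dividing by $\binom n3$ and letting $n\to\infty$.

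For the reverse inequality I would argue by contradiction, using supersaturation and a counting (``blow-up'') lemma. Suppose $\pi(\fmH[t])>\pi(\fmH)$; then there are $\delta>0$ and arbitrarily large $\fmH[t]$-free $3$-graphs $\mathcal{G}$ on $n$ vertices with $|\mathcal{G}|\ge(\pi(\fmH)+\delta)\binom n3$. Fix $q$ large enough that $(\pi(\fmH)+\delta/2)\binom q3>\ex(q,\fmH)$. Averaging $|\mathcal{G}[Q]|$ over all $q$-subsets $Q\subseteq V(\mathcal{G})$ shows that a fraction of these $Q$, bounded below in terms of $\delta$, satisfy $|\mathcal{G}[Q]|\ge(\pi(\fmH)+\delta/2)\binom q3$ and therefore contain some member of $\fmH$. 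Writing $\fmH=\{\mH_1,\dots,\mH_h\}$ and pigeonholing on which member appears, for infinitely many $n$ the same $\mH_i$ is contained in at least a $\delta/(2h)$ fraction of the sets $Q$; a standard double count (each copy of $\mH_i$ lies in exactly $\binom{n-v_i}{q-v_i}$ such $Q$, where $v_i\ce|V(\mH_i)|$) then yields that $\mathcal{G}$ contains at least $c\,n^{v_i}$ copies of $\mH_i$ for some constant $c=c(\delta,h,q)>0$.

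It remains to show that $c\,n^{v_i}$ copies of $\mH_i$ force a copy of $\mH_i[t]$ once $n$ is large, which then contradicts $\fmH[t]$-freeness and gives $\pi(\fmH[t])\le\pi(\fmH)$. Identify $V(\mH_i)$ with $[v_i]$ and let $S\subseteq V(\mathcal{G})^{v_i}$ be the set of injective edge-preserving maps $[v_i]\to V(\mathcal{G})$ (ordered copies of $\mH_i$), so $|S|\ge c\,n^{v_i}$. Pick a uniformly random partition of $V(\mathcal{G})$ into parts $V_1,\dots,V_{v_i}$ and keep only the copies placing vertex $j$ in $V_j$; with positive probability the parts all have size $\Theta(n)$ and the number of retained copies is $\Omega(n^{v_i})$, so fixing such a partition, the retained copies form a subset of $V_1\times\cdots\times V_{v_i}$ whose density there is bounded below by an absolute positive constant. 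By Erd\H{o}s's theorem on the Tur\'an number of the complete $v_i$-partite $v_i$-uniform hypergraph $K^{(v_i)}_{t,\dots,t}$ (the \emph{box lemma}), for $n$ large such a dense subset of $V_1\times\cdots\times V_{v_i}$ contains a full product set $A_1\times\cdots\times A_{v_i}$ with $|A_j|=t$ for every $j$. Since the $V_j$, hence the $A_j$, are pairwise disjoint and every transversal of $A_1\times\cdots\times A_{v_i}$ is a copy of $\mH_i$, the set $A_1\cup\cdots\cup A_{v_i}$ spans a copy of $\mH_i[t]$ in $\mathcal{G}$ — the desired contradiction.

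I expect the only non-routine ingredient to be the box lemma, i.e.\ Erd\H{o}s's bound $\ex(n,K^{(v)}_{t,\dots,t})=O\!\left(n^{\,v-1/t^{v-1}}\right)$, proved by induction on $v$ with a K\H{o}v\'ari--S\'os--Tur\'an-style double count at each step; given this, the supersaturation step (where the pigeonhole over members is what recovers the correct exponent $v_i$) and the random-partition reduction are entirely standard. Since in the present paper Theorem~\ref{thm::turanBlowup} is only invoked as a black box, it suffices to cite this classical material.
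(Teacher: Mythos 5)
Your argument is correct, and it is essentially the standard proof that the paper points to by citing Section 2 of~\cite{keevash2011hypergraph}: the paper gives no proof of its own, and the cited one proceeds exactly via supersaturation plus the Erd\H{o}s box theorem to pass from many copies of $\mH_i$ to a copy of $\mH_i[t]$. So your proposal matches the intended argument; no gaps.
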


We use the following version of the removal lemma for tournaments by Choi, Lidick\'{y}, and Pfender~\cite{choi2020inducibility} (see their Lemma 5; their original theorem is about oriented graphs, and we can, for example, add the graph consisting of two isolated vertices to the forbidden family to get the following version), which follows from a general theorem by Aroskar and Cummings~\cite{aroskar2014limits}.

\begin{theorem} \label{thm::remLem}
Let $\fT$ be a (possibly infinite) set of tournaments. For every $\eps>0$, there exist $n_0$ and $\delta>0$ such that for every tournament $T$ on $n \geq n_0$ vertices, if $T$ contains at most $\delta n^{|V(D)|}$ copies of $D$ for each $D$ in $\fT$, then there exists $T'$ on the same vertex set such that $T'$ is $D$-free for every $D$ in $\fT$ and $T'$ can be obtained from $T$ by reorienting at most $\eps n^2$ edges.
\end{theorem}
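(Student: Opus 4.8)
The plan is to derive this from a removal lemma for oriented graphs --- for instance Lemma~5 of Choi, Lidick\'y, and Pfender, which itself rests on the limit theory of Aroskar and Cummings --- the only extra work being to guarantee that the modified structure is again a tournament. Such a removal lemma states: for a finite or infinite family $\fD$ of oriented graphs and every $\eps>0$ there are $n_0,\delta>0$ so that every oriented graph $G$ on $n\ge n_0$ vertices with at most $\delta n^{|V(D)|}$ induced copies of each $D\in\fD$ can be made induced-$\fD$-free by adding, deleting, or reorienting at most $\eps n^2$ edges. I would invoke it for the forbidden family $\fD\ce\fT\cup\{I_2\}$, where $I_2$ is the edgeless oriented graph on two vertices, and the given $\eps$, obtaining constants $n_0$ and $\delta$.

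Given a tournament $T$ on $n\ge n_0$ vertices with at most $\delta n^{|V(D)|}$ copies of each $D\in\fT$: since $T$ is complete, every copy of a tournament $D$ in $T$ is already an induced copy, so $T$ has at most $\delta n^{|V(D)|}$ induced copies of each $D\in\fT$; and $T$ has no induced $I_2$ at all. Hence $T$ meets the hypothesis of the oriented-graph lemma for $\fD$, which produces $G'$ on $V(T)$, within $\eps n^2$ edge modifications of $T$, that is induced-$\fD$-free. Induced-$I_2$-freeness means every pair of vertices of $G'$ is adjacent, so $G'$ is a tournament $T'$; and a tournament that is induced-$D$-free for every $D\in\fT$ is exactly a $D$-free tournament for every $D\in\fT$. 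Finally, as $T$ and $T'$ are both tournaments, $T'$ arises from $T$ by reorienting precisely the pairs on which they disagree, and the number of such pairs is at most the number of modifications, i.e.\ at most $\eps n^2$ (up to rescaling $\eps$ by a constant if the removal lemma's edit distance is defined differently). This is exactly the claimed statement.

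If one instead wanted a self-contained proof, I would run the regularity--counting--cleaning scheme adapted to tournaments. First establish a Szemer\'edi-type regularity lemma for tournaments: for every $\gamma>0$ there is $k=k(\gamma)$ so that $V(T)$ has a partition into at most $k$ nearly equal parts $V_1,\dots,V_k$ with all but $\gamma k^2$ ordered pairs $(V_i,V_j)$ being $\gamma$-regular of some forward density $d_{ij}$, where $d_{ij}+d_{ji}=1$. Form the reduced weighted tournament $R$ on the parts, discarding irregular pairs and pairs with $d_{ij}\le\eta$. A counting lemma then shows: if some $D\in\fT$ maps homomorphically into $R$ --- with several vertices of $D$ allowed to land in the same part, in which case the required sub-tournament there is found recursively inside that cluster --- then $T$ has $\Omega_\eta(n^{|V(D)|})$ copies of $D$. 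So few copies of every $D\in\fT$ force no such map to exist, and then reorienting, inside every part and inside every discarded pair, all arcs to agree with the dominant direction recorded by $R$ yields an $\fT$-free tournament $T'$ differing from $T$ in $O((\gamma+\eta+1/k)n^2)$ arcs; taking $\gamma,\eta,1/k$ small completes the proof.

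The main obstacle lies in this second route, namely the counting lemma: unlike in the ordinary graph case, one must handle two vertices of the forbidden $D$ landing in a common cluster, which turns the argument into an iterated application of regularity, and one must verify that a regular pair of intermediate density $\eta<d_{ij}<1-\eta$ genuinely supplies the needed embeddings rather than obstructing them --- here the completeness of tournaments and the hereditary nature of ``$\fT$-free'' are what make it work. For the purposes of this paper the first route is entirely sufficient, since the cited oriented-graph removal lemma already carries out all of the analytic content and only the bookkeeping above remains.
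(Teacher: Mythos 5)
Your first route is exactly what the paper does: Theorem~\ref{thm::remLem} is not proved from scratch there but deduced from the oriented-graph removal lemma of Choi, Lidick\'y, and Pfender (their Lemma~5, resting on Aroskar--Cummings), with precisely the same trick of adding the edgeless two-vertex oriented graph to the forbidden family so that the cleaned structure is forced to be a tournament and edits become reorientations. So your argument is correct and essentially coincides with the paper's; the regularity-based second route you sketch is not needed.
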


The following is a technical lemma that will be used in our proof of the stability result in Section~\ref{sec::Sta}. We omit its standard proof.

\begin{lemma} \label{lem::pro}
For integers $a \ge b > 0$, we have
$$
\max \left(\sum_{i = 1}^\infty x_iy_i \right)
\le
\left \lfloor \frac{a}{b} \right \rfloor \cdot \frac{b^2}{4}
+
\frac{1}{4}\left(a - b \left \lfloor \frac{a}{b} \right \rfloor \right)^2
,
$$
where the maximum is over non-negative integer sequences $(x_1,x_2,\ldots)$ and $(y_1,y_2,\ldots)$ such that $\sum_{i=1}^\infty (x_i+y_i) = a$ and $x_j + y_j \le b$ for every $j \ge 1$.
\end{lemma}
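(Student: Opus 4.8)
The plan is to reduce the problem to a one‑variable estimate and then finish by a convexity/smoothing argument. Write $q \ce \lfloor a/b \rfloor$ and $r \ce a - bq$, so that $0 \le r < b$ and the claimed upper bound is $\tfrac14(qb^2 + r^2)$. Fix any admissible pair of sequences $(x_i),(y_i)$. Since the $x_i$ and $y_i$ are non‑negative integers with $\sum_i (x_i + y_i) = a < \infty$, all but finitely many of them vanish, so every sum below is finite. For each index $i$ put $s_i \ce x_i + y_i$; then $0 \le s_i \le b$, $\sum_i s_i = a$, and by the AM--GM inequality $x_i y_i \le \bigl(\tfrac{x_i + y_i}{2}\bigr)^2 = s_i^2/4$. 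Summing over $i$, it suffices to prove
\[
  \sum_i s_i^2 \;\le\; qb^2 + r^2
\]
for every finitely supported sequence of reals $(s_i)$ with $s_i \in [0,b]$ and $\sum_i s_i = a$.

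For this I would use a smoothing argument based on the convexity of $t \mapsto t^2$. Call an index $i$ \emph{fractional} if $0 < s_i < b$. As long as there are two distinct fractional indices $i$ and $j$, replace the pair $(s_i, s_j)$ by $(s_i + s_j, 0)$ if $s_i + s_j \le b$, and by $(b,\, s_i + s_j - b)$ otherwise. This keeps all entries in $[0,b]$ and keeps $\sum_i s_i$ fixed, and it does not decrease $\sum_i s_i^2$: in the first case $(s_i + s_j)^2 \ge s_i^2 + s_j^2$, while in the second case
\[
  b^2 + (s_i + s_j - b)^2 - s_i^2 - s_j^2 = 2(b - s_i)(b - s_j) \ge 0 .
\]
Each such step strictly decreases the number of fractional indices, so after finitely many steps we reach a sequence with at most one fractional entry; it therefore consists of some number $k$ of entries equal to $b$, at most one entry $r' \in [0,b)$, and all remaining entries equal to $0$, with $kb + r' = a$. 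Since $0 \le r' < b$, this forces $k = q$ and $r' = r$, whence $\sum_i s_i^2 \le qb^2 + r^2$. Combining this with $\sum_i x_i y_i \le \tfrac14 \sum_i s_i^2$ yields the lemma.

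The argument is essentially routine. The only points requiring a moment's care are verifying that every smoothing step genuinely lowers the number of fractional indices (so that the process terminates in finitely many steps), and that the terminal configuration has exactly $\lfloor a/b \rfloor$ entries equal to $b$ — both of which follow from the bound $0 \le r' < b$. I do not anticipate any real obstacle here; this is why the authors call it a technical lemma and omit the proof.
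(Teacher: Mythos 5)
Your argument is correct and complete: the reduction $x_iy_i \le \tfrac14 (x_i+y_i)^2$ via AM--GM, followed by the convexity smoothing that pushes mass into blocks of size $b$ (with the identity $b^2+(s_i+s_j-b)^2-s_i^2-s_j^2=2(b-s_i)(b-s_j)\ge 0$ handling the overflow case), correctly yields the extremal configuration of $\lfloor a/b\rfloor$ blocks of size $b$ plus one residual block of size $a-b\lfloor a/b\rfloor$, which gives exactly the claimed bound. The paper explicitly omits the proof of this lemma as standard, so there is nothing to compare against; your write-up is precisely the sort of routine argument the authors had in mind, and the two points you flag (termination of the smoothing and identification of the terminal configuration) are indeed the only ones needing care, and you have verified both.
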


\section{Cyclic triangles in tournaments} \label{sec::Cyc}
In this section, we provide the first step toward proving Theorem~\ref{thm::main}, by relating it to the number of cyclic triangles in tournaments.

For $3$-graphs $\mH$ and $\F$, a \emph{surjective homomorphism} from $\mH$ to $\F$ is a surjective map from $V(\mH)$ to $V(\F)$ such that $\{f(v_1),f(v_2),f(v_3)\} \in \F$ if $\{v_1,v_2,v_3\} \in \mH$ and for every $\{u_1, u_2, u_3\} \in \F$, there is $\{v_1,v_2,v_3\} \in \mH$ with $\{f(v_1),f(v_2),f(v_3)\} = \{u_1, u_2, u_3\}$.
Following~\cite{kamvcev2022turan}, we have the following definitions.
\begin{definition} \label{def::psePath}
For every integer $\l \ge 3$, let $\mP_\l$ be the path of size $\l$, i.e., it is the $3$-graph on vertex set $\{1,2,\ldots, \l\}$ with hyperedges $\{\{i,i+1,i+2\}: 1 \le i \le \l -2\}$.
We call a $3$-graph $\mH$ a \emph{pseudo-path} of size $\l$ if there exists a surjective homomorphism from $\mP_\l$ to $\mH$.
\end{definition}
Hence, pseudo-paths are a generalization of paths, where repeated vertices are allowed. For tight cycles, we have the following similar notion.

\begin{definition} \label{def::pseCyc}
For every integer $\l \ge 4$, we call a $3$-graph $\mH$ a \emph{pseudo-cycle} of size $\l$ if there exists a surjective homomorphism from $\C_\l$ to $\mH$, and
we call a $3$-graph $\mH$ a \emph{pseudo-cycle minus one hyperedge} of size $\l$ if there exists a surjective homomorphism from $\Cm$ to $\mH$.
\end{definition}
For every integer $\L \ge 4$, let $\fCm$ be the set of all the pseudo-cycles minus one hyperedge of size $\l$, where $4 \le \l \le \L$ and $3 \nmid \l$. Let $\ifCm \ce \bigcup_{\L \ge 4} \fCm$. It can be easily checked that $\E_n$, the iterated blow-up of a hyperedge defined in Construction~\ref{con::iteEdge}, is $\ifCm$-free.

For a copy $\mH$ of pseudo-path, pseudo-cycle minus one hyperedge, or a pseudo-cycle of size $\l$, we often use $v_0v_2\ldots v_{\l-1}$, a sequence of its vertices (with repetition allowed) to stand for it; this means that $\mH$ consists of hyperedges $\{v_{i},v_{i+1 \pmod{\l}},v_{i+2 \pmod{\l}} \}$ for $0 \le i \le \l-3$, when $\mH$ is a pseudo-path, for $0 \le i \le \l-2$, when $\mH$ is a pseudo-cycle minus one hyperedge, and for $0\le i \le \l-1$, when $\mH$ is a pseudo-cycle.

For $k \ge 4$, we call a pseudo-path $v_1v_2\ldots v_k v_2v_1$ a \emph{bottle} of size $k+2$, see Figure~\ref{fig::bottle}.
For $\L \ge 6$,
let $\fB$ be the set of the bottles of size $\l$ where $6 \le \l \le \L$ and $\ifB \ce \bigcup_{\L \ge 6} \fB$.

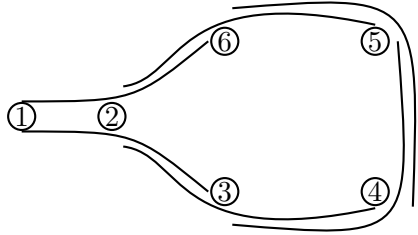
\begin{figure}[ht]
    \centering
    \begin{tikzpicture}
    \draw [thick] (-2.7,0) circle [radius=0.18] node {$1$};
    \draw [thick](-1.5,0) circle [radius=0.18] node {$2$};
    \draw [thick](0,-1) circle [radius=0.18] node {$3$};
    \draw [thick](2,-1) circle [radius=0.18] node {$4$};
    \draw [thick](2,1) circle [radius=0.18] node {$5$};
    \draw [thick](0,1) circle [radius=0.18] node {$6$};
    \draw [thick] (-2.7,-0.2) .. controls (-1.2,-0.2) .. (-0.22,-1.0);
    \draw [thick](-2.7, 0.2) .. controls (-1.2, 0.2) .. (-0.22, 1.0);
    \draw [thick](-1.35, -0.4) .. controls (-0.55,-0.5) and (-0.7, -1.8) .. (2, -1.22);
    \draw [thick](-1.35,  0.4) .. controls (-0.55, 0.5) and (-0.7,  1.8) .. (2,  1.22);
    \draw [thick](0.1,-1.44) .. controls (2.5,-1.64) .. (2.3, 1);
    \draw [thick] (0.1, 1.44) .. controls (2.6, 1.64) .. (2.5, -1.2);
    \end{tikzpicture}
    \caption{An example of a bottle of size $8$. It has vertex set $\{1,2,\ldots, 6\}$ and hyperedges $\{1,2,3\},\{2,3,4\},\{3,4,5\},\{4,5,6\},\{5,6,2\},\{6,2,1\}$.
    It can be represented as $12345621$.
    }
    \label{fig::bottle}
\end{figure}

\begin{proposition} \label{pro::ori}
A $3$-graph $\mH$ is orientable if and only if it is free of $\ifB$.
\end{proposition}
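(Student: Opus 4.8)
The plan is to prove both directions. For the easy direction, suppose $\mH$ is orientable, witnessed by a tournament $T$ on $V(\mH)$. I claim no bottle embeds in $\mH$. Consider a putative copy of a bottle $v_1v_2\ldots v_kv_2v_1$ inside $\mH$. Each of its hyperedges must be a cyclic triangle in $T$. I would trace the implied orientations along the ``neck and body'': the consecutive cyclic triangles $\{v_1,v_2,v_3\},\{v_2,v_3,v_4\},\ldots,\{v_{k-1},v_k,v_2\},\{v_k,v_2,v_1\}$ force a chain of constraints on the edges among $v_1,v_2,v_3,v_k$ (the only vertices that get reused). The key point is that in a cyclic triangle the three edges cycle, so once we fix, say, the orientation of the edge $v_1v_2$, propagating around the bottle forces the orientation of $v_1v_2$ to flip, a contradiction. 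Concretely, I expect the bottle to be precisely engineered so that walking the pseudo-path from one end to the other reverses the orientation of the edge $\{v_1,v_2\}$; carefully bookkeeping the direction of each shared edge through the sequence of triangles yields the contradiction. Hence an orientable $3$-graph is $\ifB$-free.

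For the converse, suppose $\mH$ is $\ifB$-free; I must construct a tournament $T$ on $V(\mH)$ in which every hyperedge is cyclic. The natural approach is to build, for each hyperedge, a local constraint and show the constraints are globally consistent. For each hyperedge $e=\{x,y,z\}\in\mH$ there are two ``cyclic'' orientations of the triangle on $\{x,y,z\}$ (the two $3$-cycles), so $e$ imposes that the induced tournament on $\{x,y,z\}$ is one of these two. Equivalently, pick any fixed reference total order on $V(\mH)$ and say an edge is ``normal'' or ``reversed''; then being a cyclic triangle is a parity/linear condition over $\mathbb{F}_2$ on the three edge-variables of $e$. So the question becomes whether this system of $\mathbb{F}_2$-equations (one per hyperedge, in variables indexed by pairs of vertices) is solvable. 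A linear system over $\mathbb{F}_2$ is unsolvable iff some $\mathbb{F}_2$-linear combination of the equations has left-hand side $0$ but right-hand side $1$; that is, iff there is a set $S$ of hyperedges such that every pair of vertices lies in an even number of hyperedges of $S$, yet the sum of the ``defect'' constants over $S$ is odd. I would then argue that any such minimal obstruction $S$ can be organized into a closed walk through hyperedges that, when read off as a sequence of vertices, produces exactly a bottle (or contains one) — matching the definition of $\fB$. This is the combinatorial heart of the argument: translating a minimal inconsistent set of the parity system into an explicit forbidden pseudo-path of the form $v_1v_2\ldots v_kv_2v_1$.

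The main obstacle I anticipate is this last step — showing that a minimal $\mathbb{F}_2$-dependency among the hyperedge constraints is realized by a bottle, and in particular that bottles of \emph{bounded-enough} structure (arbitrary $k\ge 4$, i.e. $\ifB$ with no size cap) already suffice to rule out all inconsistencies. One has to be careful that an abstract even-degree subhypergraph need not literally be a single pseudo-path; it may decompose into several closed ``tight walks'', and one must check the defect parities add up so that at least one component is an odd (hence genuinely obstructing) closed tight walk, and then that such a walk, read as a vertex sequence with the shared edge traversed twice in opposite senses, is exactly a bottle. I would handle this by an extremal/minimality argument: take a counterexample $\mH$ to the converse with the fewest vertices (or fewest edges), show the parity system is infeasible, extract a minimal infeasible subsystem, and then hand-build the pseudo-path realizing it, reusing the observation from the forward direction that propagating orientations around a bottle is exactly the source of the parity-$1$ defect. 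Alternatively — and this may be cleaner — I would phrase it via an auxiliary graph whose vertices are the ``oriented pairs'' and where consistency of a hyperedge means identifying certain pairs; the $\ifB$-freeness guarantees this identification has no conflict, and a greedy vertex-by-vertex assignment then produces $T$.
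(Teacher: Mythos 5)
Your forward direction is fine and is essentially the paper's: fixing $v_1\to v_2$ and propagating the cyclic-triangle condition along the tight path forces $v_{i-1}\to v_i$ for all $i\le k$, then $v_k\to v_2$ and finally $v_2\to v_1$, a contradiction. The problem is the converse, where your proposal has a genuine gap on two counts. First, the $\mathbb{F}_2$ encoding is wrong as stated: measuring each pair against a fixed reference order, the two cyclic orientations of a triple $x<y<z$ are $(e_{xy},e_{yz},e_{xz})=(0,0,1)$ and $(1,1,0)$, which have \emph{different} parities, so ``cyclic'' is not one parity equation per hyperedge; it is the coset $(0,0,1)+\langle(1,1,1)\rangle$, i.e.\ two linear equations per hyperedge. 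Consequently your dual characterization of infeasibility --- a set $S$ of hyperedges covering every pair an even number of times with odd total defect --- is not the correct statement, and the plan of massaging such an $S$ into a bottle is built on it. Second, even after repairing the linear algebra, the step you yourself identify as the ``combinatorial heart'' --- that a minimal inconsistent subsystem can be organized into a closed tight walk of the very specific form $v_1v_2\ldots v_kv_2v_1$ --- is precisely the content of the proposition and is not carried out; an abstract even-degree obstruction can split into several closed walks and can revisit vertices in patterns quite unlike the bottle pattern, and nothing in the proposal excludes this.

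For comparison, the paper's converse avoids linear algebra entirely: it declares two pairs \emph{tightly connected} if a pseudo-path starts at one and ends at the other, partitions the hyperedges into tight-connectivity classes, orients one pair in each class arbitrarily, and propagates the orientation along pseudo-paths. The only possible inconsistency is that some pair $\{c,d\}$ is reachable from the root pair $\{a,b\}$ by two pseudo-paths forcing opposite orientations, and in each of the four resulting cases concatenating one path with the reverse of the other yields literally a bottle, contradicting $\ifB$-freeness; consistency within a class then makes every hyperedge cyclic, and pairs in no class are oriented arbitrarily. Your closing one-sentence alternative (the auxiliary graph of ``oriented pairs'' where $\ifB$-freeness ``guarantees this identification has no conflict'') is in fact the germ of this argument, but as written it simply asserts the conflict-freeness that must be proved --- the missing piece is exactly the concatenation-of-two-pseudo-paths case analysis producing a bottle.
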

\begin{proof}
Assume that there is a bottle $v_1v_2\ldots v_k v_2v_1$ in $\mH$, where $k \ge 4$. If $\mH$ is orientable, then without loss of generality, assume that in the corresponding tournament $T$ we have $v_1 \to v_2$. Then, by the definition of the bottle, we have $v_{i-1} \to v_i$ for $1 < i \le k$ and then $v_k \to v_2$ and $v_2 \to v_1$, a contradiction.

Assume that $\mH$ is free of $\ifB$. We say that two pairs of vertices $\{a,b\}$ and $\{c,d\}$ are \emph{tightly connected} if there is a pseudo-path $u_1u_2\ldots u_{\l -1}u_{\l}$ such that $\{u_1,u_2\} = \{a,b\}$ and $\{u_{\l -1}, u_\l\} = \{c,d\}$. Note that the three pairs of vertices in every hyperedge are always tightly connected to each other. Hence, we can partition the hyperedges of $\mH$ into equivalence classes $\mH_1, \mH_2, \ldots, \mH_p$, where hyperedges $\{a,b,c\}$ and $\{x,y,z\}$ are in the same class if $\{a,b\}$ and $\{x,y\}$ are tightly connected.
Note that for every pair of vertices $\{x,y\}$, there can be at most one $i$ such that $\{x,y\}$ is contained in some hyperedges of $\mH_i$, so when trying to orient $\{x,y\}$, we only need to consider hyperedges of $\mH_i$ and omit all others. Let $P_i$ be the set of pairs of vertices contained in some hyperedges of $\mH_i$.

Construct a tournament $T$ on $V(\mH)$ as follows.
For  every $i$, where $1 \le i \le p$, do the following algorithm.
Choose an arbitrary pair $\{a,b\}$ in $P_i$ and orient $a \to b$. For every other pair $\{c,d\}$ in $P_i$, there are two cases.
\begin{enumerate}
    \item There is a pseudo-path $\F_{1} = abw_3 \ldots w_{k_1} cd$ or a pseudo-path $\F_{2} = ba x_3 \ldots x_{k_2} dc$.
    \item There is a pseudo-path $\F_{3} = aby_3 \ldots y_{k_3}dc$ or a pseudo-path $\F_{4} = baz_3 \ldots z_{k_4}cd$.
\end{enumerate}
We claim that exactly one case happens. It is clear that at least one of the cases happens, since $\{a,b\}$ and $\{c,d\}$ are tightly connected. If both cases happen, then we have pseudo-paths $\F_{s_1}$ and $\F_{s_2}$, where $1 \le s_1\le 2$ and $3 \le s_2 \le 4$.

\begin{itemize}
    \item If $s_1 = 1, s_2 = 3$, then we have a bottle $abw_3\ldots w_{k_1}cdy_{k_3} \ldots y_3ba$.
    \item If $s_1 = 1, s_2 = 4$, then we have a bottle $dcw_{k_1}\ldots w_3 ba z_3\ldots z_{k_4} cd$.
    \item If $s_1 = 2, s_2 = 3$, then we have a bottle $cdx_{k_2}\ldots x_3 aby_3\ldots y_{k_3} dc$.
    \item If $s_1 = 2, s_2 = 4$, then we have a bottle $bax_3\ldots x_{k_2}dc z_{k_4}\ldots z_3 ab$.
\end{itemize}
Therefore, exactly one case happens, as claimed.
Now, if the first case happens, we orient $c \to d$; otherwise we orient $d \to c$.
For a hyperedge $\{x,y,z\} \in \mH_i$, if we orient $x \to y$, which means that there is a pseudo-path $ab\ldots xy$ or a pseudo-path $ba \ldots yx$, then we also have $y \to z$ and $z \to x$, since there is a pseudo-path $ab \ldots xyzx$ or a pseudo-path $ba \ldots yxzy$. Therefore, every hyperedge in $\mH_i$ is a cyclic triangle in this orientation.

Finally, for pairs of vertices not in any $P_i$, orient them in an arbitrary way. Every hyperedge in $\mH$ is a cyclic triangle in $T$, so $\mH$ is orientable.
\end{proof}

For an orientable $3$-graph $\mH$, let $T(\mH)$ be a tournament on the same vertex set
such that every hyperedge in $\mH$ is a cyclic triangle in $T(\mH)$.
For a tournament $T$, let $\mH(T)$ be the $3$-graph on the same vertex set whose hyperedges are exactly the cyclic triangles in $T$. Note that $\mH \subseteq \mH(T(\mH))$ by definition, and strict containment can happen.

For a tournament $T$, let $t(T)$ be the number of cyclic triangles in $T$. The following lemma is a well-known upper bound for $t(T)$ by Kendall and Smith~\cite{kendall1940method}. We include its proof.

\begin{lemma} \label{lem::numCycTri}
For every tournament $T$ on $n$ vertices, we have
\begin{align*}
    t(T) \le
\left\{
\begin{array}{ll}
\frac{1}{24}(n^3 - n) & \textrm{if $n$ is odd,} \\
\frac{1}{24}(n^3 - 4n) & \textrm{if $n$ is even.}
\end{array}
\right.
\end{align*}
\end{lemma}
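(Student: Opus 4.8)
The plan is to count transitive (i.e. non-cyclic) triangles instead of cyclic ones, via a double count organized by the source vertex of each transitive triangle. Every $3$-subset of $V(T)$ spans either a cyclic triangle or a transitive one, so $t(T) = \binom{n}{3} - (\text{number of transitive triangles})$. In a transitive triangle there is a unique vertex that beats the other two, hence, summing over $v \in V(T)$ the number of pairs inside $N^+(v)$, every transitive triangle is counted exactly once (at its source) and every cyclic triangle is counted zero times. This yields the identity
$$
t(T) = \binom{n}{3} - \sum_{v \in V(T)} \binom{d^+(v)}{2}.
$$

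Next I would minimize $\sum_{v} \binom{d^+(v)}{2}$ over all possible out-degree sequences, using the single linear constraint $\sum_{v} d^+(v) = \binom{n}{2}$, which holds because each edge contributes exactly $1$ to the total out-degree. Since $\binom{a}{2} - \binom{a-1}{2} = a-1$ is increasing in $a$, a smoothing argument applies: if two out-degrees differ by at least $2$, shifting one unit of out-degree from the larger to the smaller strictly decreases $\sum_v \binom{d^+(v)}{2}$. Therefore a minimizing sequence has all $d^+(v)$ within $1$ of each other, and combined with $\sum_v d^+(v) = n(n-1)/2$ this pins the sequence down: when $n$ is odd, $d^+(v) = (n-1)/2$ for all $v$; when $n$ is even, exactly $n/2$ vertices have out-degree $n/2$ and the other $n/2$ have out-degree $n/2 - 1$.

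Finally I would substitute these extremal sequences back into the identity. For $n$ odd,
$$
\sum_{v}\binom{d^+(v)}{2} \ge n\binom{(n-1)/2}{2} = \frac{n(n-1)(n-3)}{8},
$$
so $t(T) \le \binom{n}{3} - \tfrac{n(n-1)(n-3)}{8} = \tfrac{1}{24}(n^3 - n)$ after collecting terms over denominator $24$. For $n$ even,
$$
\sum_{v}\binom{d^+(v)}{2} \ge \frac{n}{2}\left(\binom{n/2}{2} + \binom{n/2 - 1}{2}\right) = \frac{n(n-2)^2}{8},
$$
giving $t(T) \le \binom{n}{3} - \tfrac{n(n-2)^2}{8} = \tfrac{1}{24}(n^3 - 4n)$.

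There is no genuine obstacle here — this is the classical Kendall–Smith bound — and the only points demanding a little care are the discrete convexity (smoothing) step, which needs the strict-decrease inequality spelled out, and keeping the parity cases separate in the closing arithmetic. If one wanted to streamline the write-up, the last two displays could be replaced by a single appeal to the power-mean / Cauchy–Schwarz inequality applied to $\sum_v d^+(v)$, at the cost of a slightly weaker (non-integer) bound; to get the exact stated constants one does need the integral refinement above.
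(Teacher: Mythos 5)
Your proposal is correct and follows essentially the same route as the paper: both double-count the non-cyclic triangles through out-degree pairs to get $t(T)=\binom{n}{3}-\sum_v\binom{d^+(v)}{2}$ (the paper writes the symmetrized form with $\frac12\sum_v\bigl(\binom{d^+(v)}{2}+\binom{d^-(v)}{2}\bigr)$) and then apply a convexity bound to the degree sequence. The only cosmetic difference is that the paper bounds each vertex's contribution pointwise using $d^+(v)+d^-(v)=n-1$, whereas you smooth globally under the single constraint $\sum_v d^+(v)=\binom{n}{2}$; both yield exactly the stated constants.
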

\begin{proof}
For every non-cyclic triangle in $T$, it has exactly one vertex with two out-edges and exactly one vertex with two in-edges. Hence, we have
\begin{align}
t(T)
& = \b{n}{3} - \frac{1}{2}\sum_{v \in V(T)}\left( \b{d^+(v)}{2} + \b{d^-(v)}{2} \right) \notag \\
& = \b{n}{3} - \frac{1}{4} \sum_{v \in V(T)} \left( (d^+(v))^2 + (d^-(v))^2 - (n-1) \right) \label{equ::numCycTri}\\
& \le \b{n}{3} - \frac{1}{4} \sum_{v \in V(T)} \left( \left(\left\lceil \frac{n-1}{2} \right\rceil\right)^2 + \left(\left\lfloor \frac{n-1}{2} \right\rfloor\right)^2 - (n-1)
\right). \notag
\end{align}
The claim then follows via an easy calculation.
\end{proof}
Proposition~\ref{pro::ori} and Lemma~\ref{lem::numCycTri} show that $\pi(\ifB) \le 1/4$. Now we convert this bound to $\ifCm$.

\begin{lemma} \label{lem::freifCmOri}
If a $3$-graph $\mH$ is free of $\fCm$, then $\mH$ is free of $\fB[(\L + 2)]$. In particular, if $\mH$ is free of $\ifCm$, then $\mH$ is orientable.
\end{lemma}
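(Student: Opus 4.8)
The plan is to prove the contrapositive of the first assertion: assuming $\mH$ contains a bottle of size $\l_0$ for some $6 \le \l_0 \le \L+2$, I will produce inside $\mH$ a pseudo-cycle minus one hyperedge of size $\l$ with $4 \le \l \le \L$ and $3 \nmid \l$, i.e.\ a member of $\fCm$. Write this bottle as the image, under a surjective homomorphism, of the pattern $v_1 v_2 \cdots v_k v_2 v_1$ with $k = \l_0 - 2 \ge 4$; thus the following triples are all hyperedges of $\mH$: $e_i \ce \{v_i, v_{i+1}, v_{i+2}\}$ for $1 \le i \le k-2$, together with $e_{k-1} \ce \{v_{k-1}, v_k, v_2\}$ and $e_k \ce \{v_k, v_2, v_1\}$.

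The key step is to extract from this bottle pseudo-cycles minus one hyperedge of the two \emph{consecutive} sizes $k$ and $k-1$, both of which are at most $\L$. First, I claim that the cyclic vertex sequence $v_3 v_4 \cdots v_k v_2 v_1$ of length $k$ is a pseudo-cycle minus one hyperedge of size $k$: reading its consecutive triples in cyclic order, the $k-1$ triples that $\Cm[k]$ requires to be hyperedges are $e_3, e_4, \ldots, e_{k-1}, e_k, e_1$, each a hyperedge of the bottle, while the one triple it is allowed to omit is exactly $\{v_1, v_3, v_4\}$. Hence the map $\Cm[k] \to \mH$ following this sequence of vertices is a surjective homomorphism onto the subhypergraph it spans, so $\mH$ contains a pseudo-cycle minus one hyperedge of size $k = \l_0 - 2$. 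Second, when $k \ge 5$, the cyclic vertex sequence $v_2 v_3 \cdots v_k$ of length $k-1$ is a pseudo-cycle minus one hyperedge of size $k-1$, its required triples being $e_2, e_3, \ldots, e_{k-1}$; it embeds into $\mH$ in the same way. I expect the size-$(\l_0-2)$ copy to be the one slightly non-obvious ingredient: the naive choice $v_1 v_2 \cdots v_k v_2$ only yields size $\l_0 - 1$, which can exceed $\L$ in the boundary case $\l_0 = \L+2$, so it does not close the argument by itself. The triple-by-triple verification of these two identifications is the only genuine computation and is routine.

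To complete the first assertion, set $m \ce \l_0 - 2$, so that $4 \le m \le \L$. If $3 \nmid m$, the size-$m$ copy produced above lies in $\fCm$, a contradiction. If $3 \mid m$, then $m \ge 6$, hence $k = m \ge 5$ and the size-$(m-1)$ copy exists; since $3 \nmid (m-1)$ and $4 \le m-1 \le \L-1$, it too lies in $\fCm$, again a contradiction. Either way $\mH$ is not free of $\fCm$, so $\mH$ is free of $\fB[(\L+2)]$.

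For the last sentence, suppose $\mH$ is free of $\ifCm$. Then $\mH$ is free of $\fCm$ for every admissible value $\L \ge 4$ of the constant, hence free of $\fB[(\L+2)]$ for every such $\L$ by the first part; since every bottle has size at least $6$, this means $\mH$ is free of $\ifB$, and therefore $\mH$ is orientable by Proposition~\ref{pro::ori}.
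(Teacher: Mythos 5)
Your proof is correct and follows essentially the same strategy as the paper: from a bottle on $v_1v_2\ldots v_kv_2v_1$ you extract two pseudo-cycles minus one hyperedge of the consecutive sizes $k$ and $k-1$ (the paper uses $v_kv_1v_2\ldots v_{k-1}$ instead of your $v_3v_4\ldots v_kv_2v_1$, and treats $k=4$ separately via $\K_4^-$, but these are cosmetic differences), and then uses that one of two consecutive sizes in $[4,\L]$ is not divisible by $3$. The reduction of the orientability claim to Proposition~\ref{pro::ori} is also the same.
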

\begin{proof}
Assume for contradiction that $\mH$ contains a bottle $v_1v_2v_3\ldots v_k v_2v_1$, where $k \le \L$.
We have $k \neq 4$, since otherwise $\mH$ contains $\K_4^- = \Cm[4] \in \fCm$.
For every $k > 4$, $\mH$ contains two pseudo-cycles minus one hyperedge whose sizes differ by one: $v_kv_1v_2v_3\ldots v_{k-2}v_{k-1}$ and $v_2v_3\ldots v_{k-1}v_k$. The sizes of these two cycles are both at most $k \le \L$, and at least one of them is not divisible by $3$.
Thus, $\mH$ cannot be free of $\fCm$, a contradiction.
The second claim then follows from Proposition~\ref{pro::ori}.
\end{proof}

By Lemmas~\ref{lem::numCycTri} and~\ref{lem::freifCmOri}, we have $\pi(\ifCm) \le 1/4$.
In order to improve this to get Theorem~\ref{thm::main}, we need to study tournaments more carefully. The following lemmas about tournaments will be used in Section~\ref{sec::Sta} to prove our stability result, Proposition~\ref{pro::sta}.

\begin{lemma} \label{lem::badDegVer}
For every $\eps_1, \eps_2 > 0$, there exists $\delta \ge \eps_1\eps_2^2/2$ such that for every tournament $T$ on $n$ vertices with $t(T) > (1/24 - \delta)n^3$,
for
$$
V'(T) \ce \left\{
v\in V(T):
\frac{n-1}{2} - \eps_2 n
<
d^+(v), d^-(v)
<
\frac{n-1}{2} + \eps_2n
\right\},
$$
we have
$
\left| V'(T)  \right| > (1-\eps_1) n.
$
\end{lemma}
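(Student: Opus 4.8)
The plan is to read off everything from the exact identity that already appears inside the proof of Lemma~\ref{lem::numCycTri}. For each $v\in V(T)$ set $a_v \ce d^+(v) - \tfrac{n-1}{2}$; since $d^+(v)+d^-(v)=n-1$ we have $d^-(v) = \tfrac{n-1}{2} - a_v$, hence $(d^+(v))^2 + (d^-(v))^2 = \tfrac{(n-1)^2}{2} + 2a_v^2$. Substituting this into \eqref{equ::numCycTri}, the constant terms collapse exactly as in the odd case of Lemma~\ref{lem::numCycTri}, and one obtains the clean identity
$$
t(T) \;=\; \frac{n^3-n}{24} \;-\; \frac{1}{2}\sum_{v\in V(T)} a_v^2 .
$$
That is, the gap between $t(T)$ and the extremal value $\tfrac{1}{24}(n^3-n)$ is precisely half the ``second moment'' of the deviations of the out-degrees from $\tfrac{n-1}{2}$.

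Next I would feed in the hypothesis. From $t(T) > (\tfrac{1}{24}-\delta)n^3$ and the identity above, rearranging and discarding the favourable term $-\tfrac{n}{24}\le 0$ gives $\sum_{v} a_v^2 < 2\delta n^3$. The point now is that membership in $V'(T)$ is governed purely by $|a_v|$: since $d^-(v) = n-1-d^+(v)$, the condition $\tfrac{n-1}{2}-\eps_2 n < d^-(v) < \tfrac{n-1}{2}+\eps_2 n$ is equivalent to $\tfrac{n-1}{2}-\eps_2 n < d^+(v) < \tfrac{n-1}{2}+\eps_2 n$, so $v\in V'(T)$ if and only if $|a_v| < \eps_2 n$. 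Hence every $v \notin V'(T)$ contributes $a_v^2 \ge \eps_2^2 n^2$ to the sum, and a one-line Markov-type estimate yields
$$
|V(T)\sm V'(T)| \cdot \eps_2^2 n^2 \;\le\; \sum_{v\in V(T)} a_v^2 \;<\; 2\delta n^3 ,
$$
that is, $|V(T)\sm V'(T)| < 2\delta n/\eps_2^2$.

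Finally I would simply take $\delta \ce \eps_1\eps_2^2/2$, which trivially satisfies $\delta \ge \eps_1\eps_2^2/2$: the last display then reads $|V(T)\sm V'(T)| < \eps_1 n$, i.e.\ $|V'(T)| > (1-\eps_1)n$, as required. I do not expect any genuine obstacle here; the whole argument is an exact computation followed by a second-moment estimate. The only points demanding a little care are performing the algebra in the identity precisely, so that no stray lower-order term survives with the wrong sign, and recording the equivalence of the out-degree and in-degree conditions, so that the vertices outside $V'(T)$ are exactly those with large $|a_v|$.
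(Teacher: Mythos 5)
Your proof is correct and rests on the same foundation as the paper's: the identity \eqref{equ::numCycTri} together with the quadratic penalty that degree deviations from $(n-1)/2$ impose on $t(T)$, with the same choice $\delta=\eps_1\eps_2^2/2$. The paper argues by contradiction with a two-case lower bound on $(d^+(v))^2+(d^-(v))^2$, while you package the same computation as an exact identity $t(T)=\tfrac{n^3-n}{24}-\tfrac12\sum_v a_v^2$ followed by a Markov-type estimate — a cosmetic difference only.
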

\begin{proof}
Let $\delta = \eps_1\eps_2^2 / 2$.
If the claim is not true, then by \eqref{equ::numCycTri}, we have
\begin{align*}
t(T) < \b{n}{3}
&- \frac{1}{4} (\eps_1 n)
\left(
\left(\frac{n-1}{2} + \eps_2 n\right)^2 + \left(\frac{n-1}{2} - \eps_2 n\right)^2 - (n-1)
\right) \\
&- \frac{1}{4} (1-\eps_1)n
\left( 2 \cdot
\left(\frac{n-1}{2}\right)^2 - (n-1)
\right)
=\left(\frac{1}{24} - \frac{1}{2}\eps_1\eps_2^2\right)n^3 - \frac{1}{24}n
< t(T)
,
\end{align*}
a contradiction.
\end{proof}

\begin{lemma} \label{lem::degWithin}
For every $n$-vertex tournament $T$, where $n > 8$, we have
$$
\left| \left\{u \in V(T): d^+(u) \ge \frac{n}{4} \right\}\right| \ge \frac{n}{4} \;\; \textrm{and} \;\;
\left| \left\{u \in V(T): d^-(u) \ge \frac{n}{4} \right\}\right| \ge \frac{n}{4}.
$$
\end{lemma}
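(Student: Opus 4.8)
The plan is to prove the out-degree statement directly, and then obtain the in-degree statement for free by applying it to the tournament obtained from $T$ by reversing every edge, which interchanges $N^+$ and $N^-$. So it suffices to show $\bigl|\{u \in V(T): d^+(u) \ge n/4\}\bigr| \ge n/4$.

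First I would set $S \ce \{u \in V(T): d^+(u) < n/4\}$ and note that a global count of out-degrees is hopeless here: $\sum_{v \in V(T)} d^+(v) = \b{n}{2}$, but a single vertex of out-degree $n-1$ can ``pay for'' arbitrarily many vertices of $S$, so averaging over all of $V(T)$ yields nothing useful. The key point is to pass to the subtournament $T[S]$. For every $u \in S$ we have $d^+_{T[S]}(u) \le d^+_T(u) < n/4$, while $\sum_{u \in S} d^+_{T[S]}(u) = \b{|S|}{2}$, this being simply the number of edges of $T[S]$. Hence, provided $|S| \ge 1$, we get $\b{|S|}{2} < |S| \cdot \frac{n}{4}$, which rearranges to $|S| < \frac{n}{2} + 1$.

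The conclusion is then immediate: $\bigl|\{u \in V(T): d^+(u) \ge n/4\}\bigr| = n - |S| > n - \bigl(\frac n2 + 1\bigr) = \frac n2 - 1 \ge \frac n4$, the last inequality holding for all $n \ge 4$ and in particular for $n > 8$; the case $|S| = 0$ is trivial. Applying the same argument to the reverse tournament yields the bound on $\bigl|\{u \in V(T): d^-(u) \ge n/4\}\bigr|$.

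There is essentially no obstacle beyond spotting that one must count edges inside $T[S]$ rather than over all of $T$; after that the argument is a one-line rearrangement, and the hypothesis $n > 8$ is more generous than the $n \ge 4$ this argument actually needs (the lemma is in fact far from tight, since it gives the stronger count $> n/2 - 1$). The one place to be mildly careful is the strict-versus-weak inequalities arising from $n/4$ possibly being non-integral, but since the argument only ever uses the strict bound $d^+_{T[S]}(u) < n/4$, these never cause a problem.
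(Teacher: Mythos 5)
Your proof is correct, but it takes a genuinely different route from the paper's. The paper argues by a \emph{global} edge count: if the set of vertices with $d^+(u)\ge n/4$ had size less than $n/4$, the total number of directed edges would be at most $\frac{n}{4}\cdot n+\frac{3n}{4}\cdot\frac{n}{4}=\frac{7n^2}{16}<\binom{n}{2}$, a contradiction precisely when $n>8$ (this is where the hypothesis is used), and the in-degree claim is handled by the same symmetric count. You instead localize: letting $S$ be the low-out-degree set, you count the edges of the subtournament $T[S]$ to get $\binom{|S|}{2}<|S|\cdot\frac{n}{4}$, hence $|S|<\frac{n}{2}+1$, which yields the stronger conclusion that more than $\frac{n}{2}-1$ vertices have out-degree at least $n/4$, needs only $n\ge 4$, and is tight (take a set of $n/2$ vertices beating a near-regular tournament on the remaining $n/2$). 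So your argument buys a better bound with a weaker hypothesis, at no extra cost. One correction to your commentary rather than your proof: the claim that a global count over all of $V(T)$ is ``hopeless'' is mistaken --- the sum of out-degrees is pinned at exactly $\binom{n}{2}$, so a surplus vertex can contribute at most about $n/2$ above the average while each vertex of $S$ creates a deficit of about $n/4$, and this is exactly how the paper's (shorter) proof succeeds for $n>8$; your subtournament count is simply a sharper variant, not the only viable one. The reduction of the in-degree statement via edge reversal is fine and matches the paper's symmetric treatment.
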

\begin{proof}
If any of these two claims is false, then the number of directed edges in $T$ is at most
$$
\frac{n}{4} \cdot n + \frac{3n}{4} \cdot \frac{n}{4} = \frac{7n^2}{16} < \b{n}{2},
$$
a contradiction.
\end{proof}

\begin{lemma} \label{lem::badEdges}
For every $\eps_1,\eps_2$ such that $\eps_1 /24 > \eps_2 > 0$, there exist $\delta > \eps_1^2 ( \eps_1 - 24\eps_2) /24^3$ and $N$ such that for every tournament $T$ on $n>N$ vertices with $t(T) \ge (1/24 - \delta)n^3$,
for
$$
B_T(\eps_2) \ce \left\{\{u,v\}\in \b{V(T)}{2} : \{u,v\} \textrm{ is in at most $\eps_2 n$ cyclic triangles in $T$}\right\},
$$
we have
$
\left| B_T(\eps_2) \right| < \eps_1 n^2.
$
\end{lemma}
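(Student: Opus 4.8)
The plan is to prove the contrapositive: if $|B_T(\eps_2)| \ge \eps_1 n^2$, then $t(T) < (1/24 - \delta)n^3$, for a $\delta$ slightly larger than $\eps_1^2(\eps_1 - 24\eps_2)/24^3$ and all large $n$. The first step is to rewrite the exact count \eqref{equ::numCycTri} in a transparent form: put $x_v := d^+(v) - \tfrac{n-1}{2}$, so that $d^-(v) = \tfrac{n-1}{2} - x_v$ and $(d^+(v))^2 + (d^-(v))^2 = \tfrac{(n-1)^2}{2} + 2x_v^2$; substituting into \eqref{equ::numCycTri} gives
\[
t(T) \;=\; \frac{n^3 - n}{24} \;-\; \frac12 \sum_{v} x_v^2 .
\]
Hence the whole statement reduces to a degree-sequence assertion: a tournament with at least $\eps_1 n^2$ pairs each lying in at most $\eps_2 n$ cyclic triangles must satisfy $\sum_v x_v^2 > 2\delta n^3 - \tfrac{n}{12}$, i.e.\ its out-degrees cannot be too close to the regular value $\tfrac{n-1}{2}$.

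The bridge between bad pairs and this quantity is the neighbourhood identity that, for any pair $\{u,v\}$ with $u \to v$,
\[
|N^+(u) \triangle N^+(v)| \;=\; 2\,c(u,v) + \bigl( d^+(u) - d^+(v) \bigr),
\]
where $c(u,v)$ is the number of cyclic triangles through $\{u,v\}$; since the left side is nonnegative this gives $|N^+(u)\triangle N^+(v)| \ge c(u,v)$, so a small symmetric difference forces a bad pair, and conversely a bad pair forces $|N^+(u)\triangle N^+(v)| \le 2\eps_2 n + |x_u - x_v|$. Two further elementary facts are needed. If $|N^+(u)\triangle N^+(w)| \le D$ and $|N^+(v)\triangle N^+(w)| \le D$, then the triangle $\{u,v,w\}$ is transitive with $w$ as its middle vertex, unless we are in a bounded ``exceptional'' situation, namely $v$ lies in the ($\le D$-element) support of $N^+(u)\triangle N^+(w)$, or $u$ lies in that of $N^+(v)\triangle N^+(w)$. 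And, writing $t_w$ for the number of cyclic triangles through $w$, the quantity $d^+(w)d^-(w) - t_w$ equals the number of transitive triangles with middle vertex $w$, so that $\sum_w \bigl( d^+(w)d^-(w) - t_w \bigr) = \binom{n}{3} - t(T) = \tfrac{n(n-1)(n-3)}{8} + \tfrac12\sum_v x_v^2$.

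With these in hand I would split on the size of $\sum_v x_v^2$. If $\sum_v x_v^2 \ge 2\delta n^3$ we are done immediately, so assume it is smaller. By Markov's inequality only $O(\delta n/\mu^2)$ vertices are ``heavy'' (meaning $|x_v| > \mu n$), for a threshold $\mu$ to be tuned; hence all but $O(\delta n^2/\mu^2)$ of the bad pairs join two non-heavy vertices, and by the identity above such a pair joins vertices at $N^+$-Hamming distance at most $2(\eps_2 + \mu)n$. Let $H$ be the graph on $V(T)$ whose edges are the pairs at distance at most $2(\eps_2+\mu)n$; then $H$ has at least $\tfrac12\eps_1 n^2$ edges, so by convexity it has $\Omega(\eps_1^2 n^3)$ cherries $u - w - v$; each non-exceptional cherry gives the transitive triangle $\{u,v,w\}$ with middle vertex $w$, and distinct cherries give distinct triangles. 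Plugging this into the identity for $\binom n3 - t(T)$, bounding the number of exceptional cherries by $O(\eps_2 n |B|) + O\bigl(n\sum_v|x_v|\bigr)$ and using $\sum_v |x_v| \le \sqrt{n\sum_v x_v^2}$, and finally choosing $\mu \asymp \eps_1/24$ to balance the error terms, one would push $\tfrac12\sum_v x_v^2$ up to order $\eps_1^2(\eps_1-24\eps_2)n^3/24^3$, contradicting the assumed smallness.

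The step I expect to be the genuine obstacle is this last one — making the counting actually bite. The number of transitive triangles is unconditionally $\approx n^3/8$, which dwarfs the $\approx\eps_1^2 n^3$ cherries, so a direct pigeonhole comparison is worthless; the argument can only work if the inequalities are set up so that the surplus forced by the bad pairs is absorbed into the term $\tfrac12\sum_v x_v^2$ of $\binom n3 - t(T)$ rather than into the fixed part $\tfrac{n(n-1)(n-3)}{8}$. Carrying this out with the right constants — and thereby explaining the precise shape of $\delta$, in particular the cube and the factors of $24$ — is the delicate heart of the proof, and it may well require extracting the irregularity from $H$ more efficiently than via cherries alone (for instance by isolating a dense, small-diameter piece of $H$ and using that a near-transitive sub-tournament on $s$ vertices contributes $\Omega(s^3)$ to $\sum_v x_v^2$).
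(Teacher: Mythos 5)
Your reductions are correct as far as they go: the identity $t(T)=\frac{n^3-n}{24}-\frac12\sum_v x_v^2$, the identity $|N^+(u)\triangle N^+(v)|=2c(u,v)+(d^+(u)-d^+(v))$ for $u\to v$, and the observation that a cherry $u$--$w$--$v$ in the similarity graph $H$ yields a transitive triangle with middle vertex $w$ outside an ``exceptional'' set all check out. But the proof is not complete, and the gap is exactly where you locate it: the final counting step is never carried out, and along the route you propose it cannot be. The number of transitive triangles with middle vertex $w$ is $d^+(w)d^-(w)-t_w$, and summing the cherry lower bound over $w$ gives $3t(T)\le \sum_w d^+(w)d^-(w)-\Omega(\eps_1^2 n^3)+\textrm{(errors)}$; since $\sum_w d^+(w)d^-(w)\approx n^3/4$ while $3t(T)\approx n^3/8$, there is slack of order $n^3/8$ on the right, so subtracting $\Omega(\eps_1^2n^3)$ cherries produces no pressure whatsoever on $\sum_v x_v^2$. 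Your closing hope that the surplus can be ``absorbed into the $\frac12\sum_v x_v^2$ term'' with the right constants is not an argument, and the fallback you sketch rests on a false claim: a near-transitive subtournament on $s$ vertices need \emph{not} contribute $\Omega(s^3)$ to the global $\sum_v x_v^2$ (the regular ``carousel'' tournament, $i\to j$ iff $j-i \bmod n \in \{1,\dots,\frac{n-1}{2}\}$, contains a transitive subtournament on $(n+1)/2$ vertices while every $x_v=O(1)$). If you want to finish along your lines, the usable fact is different: some vertex $w$ has $\deg_H(w)=\Omega(\eps_1 n)$, and $S=N_H(w)$ has pairwise Hamming distance $O((\eps_2+\mu)n)$, hence near-balanced degrees \emph{inside} the subtournament on $S$, which by \eqref{equ::numCycTri} applied to $T[S]$ forces $\approx |S|^3/24$ internal cyclic triangles, contradicting that every pair of $S$ lies in $O((\eps_2+\mu)n)$ cyclic triangles; that closes the argument, but it is not what you wrote.

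For comparison, the paper's proof avoids any global triangle bookkeeping. It fixes, via Lemma~\ref{lem::badDegVer}, a vertex $v_0$ of near-balanced degree for which many bad pairs $\{u,v_0\}$ have $u\in N^-(v_0)$; the bad-pair condition gives $|N^+(u)\cap N^+(v_0)|\ge d^+(v_0)-\eps_2 n$ (your ``similar out-neighbourhoods'' fact, used one-sidedly), and Lemma~\ref{lem::degWithin} applied inside $N'(v_0)\subseteq N^-(v_0)$ produces at least $\eps_1 n/12$ vertices $u$ that in addition have $\ge\eps_1 n/12$ out-neighbours inside $N'(v_0)$, hence $d^+(u)>\frac{n-1}{2}+\eps_2'n$; having this many degree-irregular vertices contradicts Lemma~\ref{lem::badDegVer} directly. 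So the paper converts bad pairs into degree irregularity locally, at a single well-chosen vertex, rather than through a global count of transitive triangles; that is the step your write-up is missing.
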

\begin{proof}
Let $\delta > \eps_1^2 ( \eps_1 - 24\eps_2) /24^3$ be the one obtained from Lemma~\ref{lem::badDegVer} when applying it with $\eps_1' = \eps_1 /12$ and $\eps_2' = \eps_1/24 - \eps_2/2$. Let $N$ be sufficiently large such that $\eps_1 N > 24$. Let $T$ be a tournament on $n>N$ vertices with $t(T) \ge (1/24 - \delta)n^3$.

Assume for contradiction that $|B_T(\eps_2)| \ge \eps_1 n^2$.
For every vertex $v \in V(T)$, let $N'(v) \ce \{u \in N^-(v): \{u,v\} \in B_T(\eps_2) \}$
and $S \ce \{v \in V(T) : |N'(v)| \ge \eps_1n / 3\}$. If $|S| \le \eps_1 n /3$, then we have
$$
|B_T(\eps_2)| \le \frac{\eps_1n}{3} \cdot n + \left( 1 - \frac{\eps_1}{3}\right)n \cdot \frac{\eps_1n}{3} < \frac{2}{3} \eps_1 n^2 < \eps_1n^2,
$$
a contradiction. Therefore, $|S| > \eps_1 n/3$ and then by Lemma~\ref{lem::badDegVer}, there exists $v_0 \in S \cap V'(T)$.

Note that for every vertex $u \in N^-(v_0)$ and vertex $w \in N^+(v_0)$, if $w \to u$, then $\{u,v_0,w\}$ forms a cyclic triangle. Hence, for every $u \in N'(v_0)$, we have
$
|N^-(u) \cap N^+(v_0)| \le \eps_2 n,
$
and then,
$$
|N^+(u) \cap N^+(v_0)| \ge d^+(v_0) - \eps_2n.
$$
Since $|N'(v_0)| \ge \eps_1n/3$, we have, by Lemma~\ref{lem::degWithin}, that there are at least $\eps_1n/12$ vertices $u \in N'(v_0)$ such that $|N^+(u) \cap N'(v_0)| \ge \eps_1n /12$. However, for every such vertex $u$, we have
$$
d^+(u) \ge |N^+(u) \cap N'(v_0)| + 1 + |N^+(u) \cap N^+(v_0)| >
\frac{\eps_1}{12} n + d^+(v_0)- \eps_2n = d^+(v_0) + 2\eps'_2n,
$$
so $d^+(u) > (n-1)/2 + \eps'_2 n$ and hence $u \notin V'(T)$.
Thus, we have $|V'|\le (1-\eps_1/12)n = (1-\eps'_1) n$, a contradiction to Lemma~\ref{lem::badDegVer}.
\end{proof}

\section{Stability result} \label{sec::Sta}
In this section, we prove our stability result.
Let $D_5$ be the tournament with vertex set $\{1,2,3,4,5\}$ and directed edges $1 \rightarrow 2$, $1 \rightarrow 3$, $1 \leftarrow 4$, $1 \leftarrow 5$, $2\rightarrow 3$, $2\rightarrow 4$, $2 \rightarrow 5$, $3\rightarrow 4$, $3 \leftarrow 5$, $4\leftarrow 5$, see Figure~\ref{fig::D5}. Let $\fT_5$ be the set of the tournaments $D$ on $5$ vertices such that $\mH(D)$ contains a copy of $\Cm[5]$ as a subhypergraph.
We will apply Theorem~\ref{thm::remLem} with these tournaments as $\fT$, the tournaments to remove, for the proof of our stability result.
For a $3$-graph $\mH$ and a $3$-partition $\pi = (V_1,V_2,V_3)$ of $V(H)$,
we write $\mH^\pi_{bad}$ for $\bigcup_{1\le i \neq j \le 3} \mH[V_i, V_i, V_j]$.

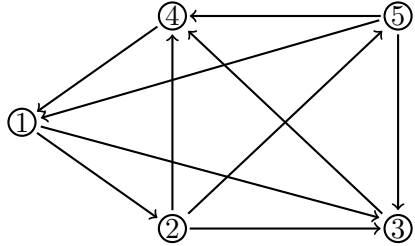
\begin{figure}[ht]
    \centering
    \begin{tikzpicture}
    \draw [thick](-2,0) circle [radius=0.18] node {$1$};
    \draw [thick](0,-1.414) circle [radius=0.18] node {$2$};
    \draw [thick](3,-1.414) circle [radius=0.18] node {$3$};
    \draw [thick](0,1.414) circle [radius=0.18] node {$4$};
    \draw [thick](3,1.414) circle [radius=0.18] node {$5$};
    \draw [thick, ->] (-1.8,-0.1414) -- (-0.2, -1.2726);
    \draw [thick, ->] (-1.75,-0.05656) -- (2.75, -1.27);
    \draw [thick, <-] (-1.8,0.1414) -- (-0.2, 1.2726);
    \draw [thick, <-] (-1.75, 0.05656) -- (2.75, 1.35744);
    \draw [thick, ->] (0, -1.17) -- (0, 1.17);
    \draw [thick, <-] (3, -1.17) -- (3, 1.17);
    \draw [thick, ->] (0.23, -1.414) -- (2.77, -1.414);
    \draw [thick, <-] (0.23,  1.414) -- (2.77,  1.414);
    \draw [thick, ->] (0.21, -1.21604) -- (2.79, 1.21604);
    \draw [thick, <-] (0.21, 1.21604) -- (2.79, -1.21604);
    \end{tikzpicture}
    \caption{The tournament $D_5$.}
    \label{fig::D5}
\end{figure}

\begin{proposition} [Stability result]
\label{pro::sta}
For every $\eps_1, \eps_2 > 0$, there exist $\delta > 0$ and $N$ such that for every $n > N$, the following is true.
For every $n$-vertex $\Cm[5]$-free orientable $3$-graph $\mH$, if $|\mH| > (1/24 - \delta) n^3$, then there exists a $3$-partition $\pi = (V_1,V_2,V_3)$ of $V(\mH)$ such that
$
(1/3 - \eps_2) n < |V_1|,|V_2|,|V_3| < (1/3 + \eps_2) n
$
and
$
|\mH^\pi_{bad}| < \eps_1 n^3.
$
\end{proposition}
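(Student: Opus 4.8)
The plan is to translate the statement into a question about tournaments, apply the removal lemma for tournaments, establish a stability statement there, and transfer it back. Since $\mH$ is orientable, fix $T := T(\mH)$, so every hyperedge of $\mH$ is a cyclic triangle of $T$ and $t(T) \ge |\mH| > (1/24-\delta)n^3$. By Lemma~\ref{lem::numCycTri}, $t(T) \le n^3/24$, hence the set $M$ of cyclic triangles of $T$ that are \emph{not} hyperedges of $\mH$ satisfies $|M| = t(T) - |\mH| < \delta n^3$. I would first use this to bound the number of copies of each $D \in \fT_5$ in $T$: for each $D \in \fT_5$ fix a sub-$3$-graph $F_D \subseteq \mH(D)$ isomorphic to $\Cm[5]$; given a copy of $D$ in $T$, the image of $F_D$ is a copy of $\Cm[5]$ in $\mH(T)$, so as $\mH$ is $\Cm[5]$-free at least one of the (four) hyperedges of this image lies in $M$. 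Since a fixed member of $M$ lies in the image of $F_D$ for at most $O(n^2)$ copies of $D$ (at most $4$ choices of hyperedge, $6$ of the bijection onto it, and $n^2$ for the remaining two vertices), $T$ has at most $O(\delta n^5)$ copies of each $D \in \fT_5$.

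Taking $\delta$ small enough, I apply Theorem~\ref{thm::remLem} with the family $\fT_5$ and a small parameter $\eps'>0$ to obtain a $\fT_5$-free tournament $T'$ on $V(\mH)$ obtained from $T$ by reorienting at most $\eps' n^2$ edges. Reorienting one edge changes the status of at most $n$ triples, so $|\mH(T)\,\triangle\,\mH(T')| \le \eps' n^3$; hence $t(T') = |\mH(T')| > (1/24-\delta-\eps')n^3$ and $|\mH \sm \mH(T')| \le |\mH(T)\sm\mH(T')| \le \eps' n^3$. Moreover $T'$ being $\fT_5$-free means no five vertices of $T'$ induce a tournament $D$ with $\Cm[5]\subseteq \mH(D)$, i.e.\ the orientable $3$-graph $\mathcal{G} := \mH(T')$ is $\Cm[5]$-free.

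The crux, and the step I expect to be the main obstacle, is the following statement about tournaments: \emph{for small $\eta>0$, every $n$-vertex tournament $T'$ with $t(T') > (1/24-\eta)n^3$ whose cyclic-triangle hypergraph $\mH(T')$ is $\Cm[5]$-free admits a partition $(V_1,V_2,V_3)$ of $V(T')$ with each $|V_i| \in ((1/3-\eps_2)n,(1/3+\eps_2)n)$ under which at most $\tfrac12\eps_1 n^3$ cyclic triangles have two vertices in one part and one vertex in another.} The intended extremal configuration is the iterated blow-up of a cyclic triangle: its cyclic-triangle hypergraph is exactly $\E_n$ of Construction~\ref{con::iteEdge}, and at its top level it consists of three equal parts with $V_1\to V_2\to V_3\to V_1$, a pattern admitting \emph{no} cyclic triangle with two vertices in one part and one in another. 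To prove the statement I would first discard, via Lemma~\ref{lem::badDegVer}, the $o(n)$ vertices whose out- and in-degrees are not within $o(n)$ of $(n-1)/2$, and record via Lemma~\ref{lem::badEdges} that all but $o(n^2)$ pairs lie in $\Omega(n)$ cyclic triangles; the work is then to use $\Cm[5]$-freeness to show that the near-regular part of $T'$ must, at the top level, be close to a blow-up of a cyclic triangle, so that the three ``colour classes'' $V_1,V_2,V_3$ are essentially forced, and that the resulting parts are nearly balanced and span only $o(n^3)$ two-in-one cyclic triangles. The last two points should follow from a counting argument contradicting $t(T')>(1/24-\eta)n^3$, in which Lemma~\ref{lem::pro} bounds the number of cyclic triangles an unbalanced top-level partition can support (with Lemma~\ref{lem::degWithin} controlling degenerate degree behaviour). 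Making ``$\Cm[5]$-freeness $\Rightarrow$ near-cyclic-blow-up at the top level'' quantitative, and controlling every error term, is the technically demanding part, and is where the tournament lemmas of Section~\ref{sec::Cyc} do their work.

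To conclude, apply this tournament statement to $T'$ with $\eta = \delta+\eps'$, obtaining $\pi=(V_1,V_2,V_3)$ with the stated balanced sizes. Each hyperedge of $\mH^\pi_{bad}$ is a hyperedge of $\mH\subseteq\mH(T)$ with two vertices in some $V_i$ and one in some $V_j$ with $j\ne i$, so it is either one of the at most $\tfrac12\eps_1 n^3$ such cyclic triangles of $T'$, or it lies in $\mH\sm\mH(T')\subseteq\mH(T)\sm\mH(T')$, a set of size at most $\eps' n^3$. Thus $|\mH^\pi_{bad}| \le \tfrac12\eps_1 n^3+\eps' n^3 \le \eps_1 n^3$ provided $\eps'\le\tfrac12\eps_1$. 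Choosing $\eps'$ small in terms of $\eps_1$ and the threshold $\eta$ required by the tournament statement, then $\delta$ small in terms of $\eps'$ and Theorem~\ref{thm::remLem}, and $N$ large enough, yields the proposition.
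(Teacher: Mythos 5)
Your framework (orient via $T(\mH)$, bound copies of members of $\fT_5$ using the $\le \delta n^3$ cyclic triangles outside $\mH$, apply Theorem~\ref{thm::remLem}, prove a stability statement for tournaments, and transfer it back at a cost of $O(\eps' n^3)$ hyperedges) is exactly the paper's reduction, and those bookkeeping steps are fine. But the statement you isolate as ``the crux'' --- that a tournament $T'$ with $t(T')>(1/24-\eta)n^3$ and $\mH(T')$ free of $\Cm[5]$ has an almost balanced partition spanning few two-in-one cyclic triangles --- is not proved in your proposal; you describe the intended extremal structure and name the lemmas you would invoke, but the actual argument forcing ``near blow-up of a cyclic triangle'' is precisely the content of the proposition, so as written this is a genuine gap rather than a proof.

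Moreover, your setup is missing an ingredient that the paper's structural argument cannot do without: the specific tournament $D_5$. Note $D_5\notin\fT_5$ (its cyclic-triangle hypergraph has only three hyperedges, so it contains no $\Cm[5]$), so removing $\fT_5$ alone does not exclude it, yet $D_5$-freeness is exactly what is used, together with $\Cm[5]$-freeness, to show that every component of a vertex's link graph in $\mH(T'')$ is complete bipartite (the case $w\gets y$, $x\gets z$ in the paper's Claim~\ref{cla::comBip} produces a copy of $D_5$, not of $\Cm[5]$). That complete-bipartite structure is what feeds Lemma~\ref{lem::pro} and yields the near-balanced tripartition. Bounding the number of induced copies of $D_5$ in $T$ so that the removal lemma can be applied with $\{D_5\}\cup\fT_5$ is itself a nontrivial counting argument (the paper's Claim~\ref{cla::numD5}, using Lemma~\ref{lem::badEdges} and the fact that the pair $\{4,5\}$ lies in no cyclic triangle of $D_5$); nothing in your sketch supplies this, and without it the ``$\Cm[5]$-freeness $\Rightarrow$ near-cyclic-blow-up'' step you defer has no visible route. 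So the proposal should be regarded as an outline of the paper's strategy with its two central steps (the $D_5$ count and the structural analysis of link graphs) left unestablished.
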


\begin{proof}
Given $\eps_1,\eps_2>0$, let $\beta \gg \gamma \gg \delta>0$ be sufficiently small, and let $N$ be sufficiently large. Let $T = T(\mH)$. By assumption, $t(T) \ge |\mH| > (1/24 - \delta) n^3$.

\begin{claim} \label{cla::numT5}
For every $D \in \fT_5$, the number of induced copies of $D$ in $T$ is at most $\delta n^5$.
\end{claim}
\begin{proof}
Since $\mH$ is free of $\Cm[5]$, whenever $T$ contains an induced copy of $D$, this copy must contain a cyclic triangle that is not a hyperedge in $\mH$. By Lemma~\ref{lem::numCycTri}, we have $t(T) < n^3/24$, so the number of such cyclic triangles is at most $\delta n^3$ and hence the number of induced copies of $D$ in $T$ is at most $\delta n^5$.
\end{proof}

\begin{claim} \label{cla::numD5}
The number of induced copies of $D_5$ in $T$ is at most
$\delta^{1/4} n^5$.
\end{claim}
\begin{proof}
Assume that the number of induced copies of $D_5$ is greater than $\delta^{1/4}n^5$. Define
\begin{multline*}
S \ce \{\{v_1,v_2,v_3,v_4,v_5, u\} \in \b{V(\mH)}{6} : v_4 \to u, u \to v_5, \textrm{ and }\\
v_i \to v_j \textrm{ in $T$ iff } i \to j \textrm{ in $D_5$, for } 1\le i < j\le 5\},
\end{multline*}
and define $A_T(\delta^{1/4})$ to be
$$
\left\{ \{v_4,v_5\}:
\left|\left\{\{v_1,v_2,v_3\}: v_i \to v_j \textrm{ in $T$ iff } i \to j \textrm{ in $D_5$, for } 1\le i < j\le 5 \right\}\right| \ge \delta^{1/4} n^3
\right\}.
$$
If $|A_T(\delta^{1/4})| < \delta^{1/4}n^2/4$, then the number of induced copies of $D_5$ in $T$ is at most
$$
\frac{1}{4}\delta^{\frac{1}{4}}n^2 \cdot \b{n}{3}
+ \b{n}{2} \cdot \frac{1}{4}\delta^{\frac{1}{4}}n^3
< \delta^{\frac{1}{4}} n^5,
$$
a contradiction to our assumption, so $|A_T(\delta^{1/4})| \ge \delta^{1/4}n^2/4$.
Since $t(T) \ge |\mH| > (1/24 -\delta)n^3$, by Lemma~\ref{lem::badEdges}, we have $|B_T(\delta^{1/3})| < 100\delta^{1/3}n^2$, so $|A_T(\delta^{1/4}) \sm B_T(\delta^{1/3})| \ge \delta^{1/4}n^2 /5$. Note that $\{4,5\}$ is not in any cyclic triangle in $D_5$, and $\{v_4,v_5,u\}$ forms a cyclic triangle, for every $\{v_1,v_2,v_3,v_4,v_5, u\} \in S$. Hence, every pair in $A_T(\delta^{1/4}) \sm B_T(\delta^{1/3})$ is in at least $\delta^{1/4}n^3 \cdot \delta^{1/3}n$ sets in $S$. Trivially, every set in $S$ contains at most $\b{6}{2} = 15$ pairs in $A_T(\delta^{1/4}) \sm B_T(\delta^{1/3})$.
Therefore,
we have
\begin{equation} \label{equ::lowerBoundS}
|S|
\ge \frac{1}{15} \cdot \delta^{\frac{1}{4}} n^3 \cdot \delta^{\frac{1}{3}}n \cdot |A_T(\delta^{1/4}) \sm B_T(\delta^{1/3})|
\ge \frac{1}{15} \cdot \delta^{\frac{1}{4}} n^3 \cdot \delta^{\frac{1}{3}}n \cdot \frac{1}{5} \delta^{\frac{1}{4}}n^2 = \frac{1}{75}\delta^{\frac{5}{6}}n^6.
\end{equation}

For every $F = \{v_1,v_2,v_3,v_4,v_5,u\} \in S$, consider the orientation of edges between $v_i$ and $u$, for $1\le i \le 3$. There are eight possibilities.
We claim that $F$ contains a copy of some $D \in \fT_5$ in every case.
\begin{itemize}
    \item If $u \gets v_1$, then $v_4uv_5v_1v_2$ forms a copy of $\Cm[5]$ in $\mH(T)$.
    \item If $u \to v_1$ and $u \gets v_3$, then $v_4v_5uv_3v_1$ forms a copy of $\Cm[5]$ in $\mH(T)$.
    \item If $u \to v_1$, $u \to v_2$, and $u \to v_3$, then $v_5v_1v_2v_4u$ forms a copy of $\Cm[5]$ in $\mH(T)$.
    \item If $u \to v_1$, $u \gets v_2$, and $u \to v_3$, then $uv_2v_1v_4v_3$ forms a copy of $\Cm[5]$ in $\mH(T)$.
\end{itemize}
For a fixed $D \in \fT_5$, by Claim~\ref{cla::numT5}, the number of copies of $D$ in $T$ is at most $\delta n^5$, and for every copy of $D$, it can be in at most $n$ sets in $S$. Therefore, we have
\begin{equation} \label{equ::upperBoundS}
|S| \le |\fT_5| \cdot \delta n^5 \cdot n.
\end{equation}
By~\eqref{equ::lowerBoundS} and~\eqref{equ::upperBoundS}, we have $|\fT_5| \ge \frac{1}{75}\delta^{-1/6} > 2^{\b{5}{2}}\ge |\fT_5|$, a contradiction.
\end{proof}
Applying Theorem~\ref{thm::remLem} to $T$ with $\fT = \{D_5\} \cup \fT_5$, we get a tournament $T'$ free of $D_5$, where $\mH(T')$ is free of $\Cm[5]$ and $T'$ and $T$ differ by at most $\gamma n^2/2$ edges. Since changing the orientation of one edge can remove at most $n$ cyclic triangles, we have $t(T') \ge t(T) - \gamma n^3/2 > (1/24 - \gamma) n^3$. Let $\mH' \ce \mH(T')$.

By Lemma~\ref{lem::badDegVer}, there exists $V'\subseteq V(T')= V(\mH)$ with $|V'| = m \ge (1- \beta/2)n$ such that $(1/2-\beta/5)n < d^+(v), d^-(v) < (1/2+\beta/5)n$ in $T'$ for every vertex $v \in V'$. Let $T''$ be the subtournament of $T'$ induced by $V'$. Let $\mH'' \ce \mH(T'')$. We have
\begin{equation} \label{equ::tTpp}
t(T'')
\ge t(T') - \frac{\beta}{2} n \cdot  n^2
> \left(\frac{1}{24} - \gamma - \frac{\beta}{2} \right) n^3
>\left(\frac{1}{24} - \beta \right) n^3
\ge \left(\frac{1}{24} - \beta \right) m^3
,
\end{equation}
and
\begin{equation} \label{equ::Tppdl}
d^+(v), d^-(v)
> \left(\frac{1}{2} - \frac{\beta}{5} \right) n - \frac{\beta}{2} n
> \left(\frac{1}{2} - \beta \right) n
\ge \left(\frac{1}{2} - \beta \right) m,
\end{equation}
\begin{equation} \label{equ::Tppdu}
d^+(v), d^-(v)
< \left( \frac{1}{2} + \frac{\beta}{5} \right) n
\le \left( \frac{\frac{1}{2} + \frac{\beta}{5}}{1- \frac{\beta}{2}} \right) m
< \left(\frac{1}{2} + \beta \right) m
\end{equation}
in $T''$ for every $v \in V'$.

\begin{claim} \label{cla::comBip}
For every vertex $v \in V'$, every component of $N^{\mH''}_{V'}(v)$ is a complete bipartite graph.
\end{claim}
\begin{proof}
For every vertex $v \in V'$, its link graph $N^{\mH''}_{V'}(v)$ is a bipartite graph with the two parts $N^+(v)$ and $N^-(v)$ in $T''$.
We first claim that if there are vertices $w,x,y,z$ such that $w,y \in N^+(v)$, $x,z \in N^-(v)$, and $w \to x$, $x \gets y$, $y \to z$ in $T''$, then we also have $w \to z$ in $T''$. Equivalently, this is to say that whenever $\{w,x\}, \{x,y\}, \{y,z\} \in N^{\mH''}_{V'}(v)$, we also have $\{w,z\} \in N^{\mH''}_{V'}(v)$. Assume for contradiction that $w \gets z$. If $w \to y$, then $wxvyz$ forms a copy of $\Cm[5]$ in $\mH''$. If $x \to z$, then $zyvxw$ forms a copy of $\Cm[5]$ in $\mH''$. Hence, the only possibility is $w \gets y$ and $x \gets z$, but then $\{v, y, w, x,z\}$ forms a copy of $D_5$ in $T''$, still a contradiction.

Now assume for contradiction that there exist vertices $a \in N^+(v)$ and $b \in N^-(v)$ in the same component of $N^{\mH''}_{V'}(v)$ but $\{a,b\} \notin N^{\mH''}_{V'}(v)$. Let $P$ be the shortest path between $a$ and $b$ in $N^{\mH''}_{V'}(v)$. Since $N^{\mH''}_{V'}(v)$ is bipartite and $\{a,b\} \notin N^{\mH''}_{V'}(v)$, we have that $P$ contains at least $4$ vertices, so we can assume that $P$ starts with $a, v_1, u_2,v_3$.
However, by the claim in the last paragraph, we have $\{a,v_3\} \in N^{\mH''}_{V'}(v)$, a contradiction to that $P$ is shortest.
\end{proof}

Let $v_0 \in V'$ be a vertex in maximum number of cyclic triangles in $T''$. Since $t(T'') \ge (1/24 - \beta )m^3$ by~\eqref{equ::tTpp}, we have that $v_0$ is in at least $(1/8 - 3\beta)m^2$ cyclic triangles and hence
\begin{equation} \label{equ::dmHppVpv0}
d^{\mH''}_{V'}(v_0) \ge (1/8 - 3\beta)m^2.
\end{equation}
Assume that $V'_2 \subseteq N^+(v_0), V'_3 \subseteq N^-(v_0)$ form the largest component in $N^{\mH''}_{V'}(v_0)$. Let $V'_{1a} \ce N^+(v_0) \sm V'_2$, $V'_{1b} \ce N^-(v_0) \sm V'_3$, and let $V'_1 \ce \{v_0\} \cup V'_{1a} \cup V'_{1b}$.
Consider the partition $\pi' \ce (V'_1,V'_2,V'_3)$ of $V'$.
By Claim~\ref{cla::comBip}, we have $V'_2 \to V'_3$, and by the definition of $V_2',V_3'$, we have $V'_2 \gets V'_{1b}$ and $V'_{1a} \gets V'_3$,
see Figure~\ref{fig::V'1V'2V'3}.

\begin{figure}[ht]
    \centering
    \begin{tikzpicture}
    \draw [thick](-1.7,0) circle [radius=0.23] node {$v_0$};
    \draw[very thick] (0,0.9) rectangle (3,2.3);
    \draw[very thick] (0,-2.3) rectangle (3,-0.9);
    \draw[very thick] (-2.2,-2.6) rectangle (1,2.6);

    \draw[thick, <-] (-1.5, 0.25 ) -- (-0.15, 1.6);
    \draw[thick, ->] (-1.5, -0.25 ) -- (-0.15, -1.6);
    \draw[thick, ->] (2, -0.8 ) -- (2, 0.8);
    \draw[thick, ->] (1.9, 0.8 ) -- (0.5, -0.8);
    \draw[thick, <-] (1.9, -0.8 ) -- (0.5, 0.8);

    \node at (1.9, 1.6) [fill=black!0,draw=black!0] {$V'_3$};
    \node at (1.9,-1.6) [fill=black!0,draw=black!0] {$V'_2$};
    \node at (0.45, 1.6) [fill=black!0,draw=black!0] {$V'_{1b}$};
    \node at (0.45,-1.6) [fill=black!0,draw=black!0] {$V'_{1a}$};
    \node at (-0.8,2) [fill=black!0,draw=black!0] {$V'_1$};
    \node at (4,  1.6) [fill=black!0,draw=black!0] {$N^-(v_0)$};
    \node at (4, -1.6) [fill=black!0,draw=black!0] {$N^+(v_0)$};
    \end{tikzpicture}
    \caption{The subsets $V'_1,V'_2,V'_3$ of $V'$.}
    \label{fig::V'1V'2V'3}
\end{figure}
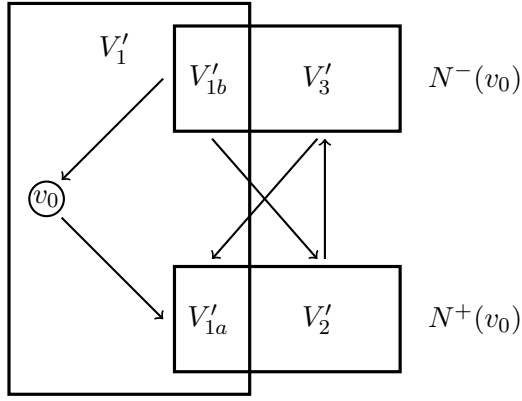

\begin{claim} \label{cla::sizeV'2V'3}
We have $|V'_2| + |V'_3| \ge (1/2 - 8\beta) m$.
\end{claim}

\begin{proof}
By Claim~\ref{cla::comBip}, we can assume that $N^{\mH''}_{V'}(v_0)$ is partitioned into complete bipartite graphs $G_1, \ldots, G_p$ and assume that $G_i$ has two parts with sizes $x_i$ and $y_i$, for $1 \le i \le k$.
Let $a = m-1$ and $b = |V'_2|+|V'_3|$. Then,
$\sum_{i=1}^p (x_i + y_i) = a$ and $x_j + y_j \le b$ for $1 \le j \le k$, by the definition of $V'_2, V'_3$. Applying Lemma~\ref{lem::pro}, we have
$$
d^{\mH''}_{V'}(v_0) = \sum_{i=1}^p x_iy_i
\le \left\lfloor \frac{a}{b} \right \rfloor \cdot \frac{b^2}{4} + \frac{1}{4}\left(a - b\left \lfloor \frac{a}{b} \right \rfloor \right)^2.
$$
If $b \le (m-1)/3$, then
\begin{align*}
d^{\mH''}_{V'}(v_0)
\le \frac{1}{4}a b + \frac{1}{4} b^2
\le \frac{m-1}{4} \cdot \frac{m-1}{3} + \frac{1}{4} \left(\frac{m-1}{3}\right)^2
= \frac{1}{9}(m-1)^2 < \left(\frac{1}{8} - 3 \beta \right)m^2.
\end{align*}
If $(m-1)/3 < b < (1/2 - 8\beta)m$, then $\lfloor a/b \rfloor =2$ and
\begin{align*}
d^{\mH''}_{V'}(v_0)
&\le 2 \cdot \frac{b^2}{4} + \frac{1}{4} \left( m-1 - 2b \right)^2
= \frac{3}{2} b^2 - (m-1)b + \frac{1}{4} (m-1)^2\\
&\le \frac{3}{2} \left(\left(\frac{1}{2} - 8\beta \right) m \right)^2 -(m-1)\cdot \left(\frac{1}{2} - 8\beta \right) m + \frac{1}{4}(m-1)^2 \\
&
= \left(\frac{1}{8} - 4\beta + 96\beta^2\right)m^2 - 8\beta m + \frac{1}{4}
< \left(\frac{1}{8} - 3 \beta \right)m^2.
\end{align*}
Both cases contradict~\eqref{equ::dmHppVpv0}.
Therefore, $|V'_2|+|V'_3| \ge (1/2 - 8\beta) m$.
\end{proof}

\begin{claim} \label{cla::V'1V'2V'3LowerBound}
We have $|V'_1|, |V'_2|, |V'_3| \ge 0.02m$.
\end{claim}
\begin{proof}
Assume for contradiction that $|V'_1| < 0.1m$ and hence $|V'_2| + |V'_3| > 0.9m$.
If $|V'_2| \ge |V'_3|$, then $|V'_2| \ge 0.45m$.
By~\eqref{equ::Tppdu}, we have $|V'_3| > 0.9m - (1/2 + \beta)m = (0.4 - \beta)m$.
 By Lemma~\ref{lem::degWithin}, there exists a vertex $v \in V'_3$ such that $|N^-(v) \cap V'_3| \ge |V'_3| / 4$. We have
$$
d^-(v) \ge |V'_2| +  |N^-(v) \cap V'_3|
\ge |V'_2| + \frac{|V'_3|}{4}
> 0.45m + \left(0.1 - \frac{\beta}{4} \right) m
> \left( \frac{1}{2} + \beta\right) m
,
$$
a contradiction to~\eqref{equ::Tppdu}. A similar argument holds if $|V'_2| < |V'_3|$ by considering the out-degree of some vertex $v \in V_2'$. Therefore, $|V'_1| \ge 0.1 m$.

Assume for contradiction that $|V_2'| < 0.02m$. Then, by~\eqref{equ::Tppdl}, we have $|V'_{1a}| \ge (0.48 -\beta) m$, and by Claim~\ref{cla::sizeV'2V'3}, we have $|V'_3| \ge (0.48-8\beta)m$. By Lemma~\ref{lem::degWithin}, there exists $v \in V'_{1a}$ such that $|N^-(v) \cap V'_{1a}| \ge (0.12 - \beta / 4) m$. Then, we $d^-(v) \ge (0.6- 9 \beta)m$, a contradiction to~\eqref{equ::Tppdu}. Therefore, $|V'_2| \ge 0.02m$. Similar arguments give $|V'_3| \ge 0.02m$. \qedhere
\end{proof}

We define
\begin{alignat*}{2}
E_{12} &\ce \{\,\{u,v\}: u \in V'_{1a}&&,\,v \in V'_2,\,u \gets v \textrm{ in $T''$}\,\}, \\
E_{13} &\ce \{\,\{u,v\}: u \in V'_{1b}&&,\,v \in V'_3,\,u \to v \textrm{ in $T''$}\,\}.
\end{alignat*}

\begin{claim} \label{cla::sizeE12E13}
We have
$|E_{12}|, |E_{13}| \le 100\beta m^2$.
\end{claim}
\begin{proof}
Let $S_1 \ce \{ u \in V'_{1a}: \{u\} \gets V'_{1b}\}$. For every $u \in S_1$, if $|N^-(u) \cap V'_2| > 2\beta m$, then using~\eqref{equ::Tppdl}, we have
$$
d^-(u) \ge |N^-(u) \cap V'_2| + |V'_3| + |V'_{1b}|
 > 2\beta m + d^-(v_0)
>\left(\frac{1}{2} + \beta \right) m,
$$
a contradiction to~\eqref{equ::Tppdu}. Hence $|N^-(u) \cap V'_2| \le 2 \beta m$ for every $ u \in S_1$.

Let $S_2 \ce \{u \in V'_{1a} \sm S_1: \textrm{there exists $x \in V'_2$ such that } x \to u\}$. We claim $|S_2| \le 60 \beta m$. Otherwise, by Lemma~\ref{lem::degWithin}, there exists $u \in S_2$ with $|N^-(u) \cap S_2| \ge 15\beta m$. By the definition of $S_2$, there exist $x \in V'_2$ with $x \to u$ and $z \in V'_{1b}$ with $u \to z$. Let $y$ be an arbitrary vertex in $V'_3$, which exists since $V'_3 \neq \es$ by Claim~\ref{cla::V'1V'2V'3LowerBound}. Now, we have $v_0 \to \{x,u\}$, $v_0 \gets \{y,z\}$, $x \to \{y,u\}$, $x \gets z$, $u \gets y$, and $u \to z$. If $y \to z$, then $v_0uzxy$ forms a copy of $\Cm[5]$ in $\mH''$, a contradiction. Hence, $y \gets z$. For every $x' \in V'_2 \sm \{x\}$, if $x' \gets u$, then $v_0x'yuz$ forms a copy of $\Cm[5]$ in $\mH''$, a contradiction, so we have $V'_2 \to \{u\}$. Then, using Claim~\ref{cla::sizeV'2V'3}, we have
$$
d^-(u) \ge |N^-(u) \cap S_2| + |V'_2| + |V'_3|
\ge 15 \beta m + \left( \frac{1}{2} - 8 \beta \right)m > \left( \frac{1}{2} +  \beta \right)m,
$$
a contradiction to~\eqref{equ::Tppdu}. Therefore, $|S_2| \le 60 \beta m$. Thus,
$$
|E_{12}| \le |S_1| \cdot 2\beta m  + |S_2| \cdot |V'_2|
\le m \cdot 2\beta m + 60 \beta m \cdot m \le 100\beta m^2.
$$
Similar arguments give the upper bound on $|E_{13}|$.
\end{proof}

\begin{claim} \label{cla::uppbadEdgesp}
We have $|\mH^{''\pi'}_{bad}| \le 200\beta m^3$.
\end{claim}
\begin{proof}
Recall that we have $V'_2 \to V'_3$. For vertices $x \in V'_1, y \in V'_2$, we have $x \to y$ unless $\{x,y\} \in E_{12}$. For vertices $x \in V'_1, z \in V'_3$, we have $x \gets z$ unless $\{x,z\} \in E_{13}$. Therefore, every hyperedge in $\mH^{''\pi'}_{bad}$ contains a pair in $E_{12} \cup E_{13}$, so $|\mH^{''\pi'}_{bad}| \le 200\beta m^3$ by Claim~\ref{cla::sizeE12E13}.
\end{proof}

\begin{claim} \label{cla::sizeVp}
We have $(1/3 - \eps_2/2)m < |V'_1|,|V'_2|,|V'_3| < (1/3 + \eps_2/4) m$.
\end{claim}
\begin{proof}

By Claim~\ref{cla::V'1V'2V'3LowerBound}, we can assume that $0.02m \le |V'_i| \le |V'_j| \le |V'_k|$, where $\{i,j,k\} = \{1,2,3\}$. Assume for contradiction that $|V'_k| \ge (1/3 + \eps_2/4)m$. Then, $|V'_i| \le (m - |V'_k|)/2 \le (1/3 - \eps_2/8)m$.
If $j \equiv i +1 \pmod{3}$, we consider the in-degree of vertices in $V'_j$. Let
$$
d^-(V'_j) \ce |\{(u,v): u\in V', v \in V'_j, u \to v \}|.
$$
By~\eqref{equ::Tppdl}, we have $d^-(V'_j) > |V'_j|(1/2 - \beta)m$. On the other hand, by the definition of $E_{12},E_{13}$ and Claim~\ref{cla::sizeE12E13}, we have
\begin{multline*}
d^-(V'_j) \le |V'_i||V'_j| + \b{|V'_j|}{2} + |E_{12}| + |E_{13}| \le \left( |V'_i| + \frac{|V'_j|}{2} \right)|V'_j| + 200 \beta m^2 \\
=
\left(\frac{m-|V'_k|}{2} + \frac{|V'_i|}{2} \right)|V'_j| + 200\beta m^2
\le |V'_j| \left(\frac{1}{2} - \frac{3\eps_2}{16}\right) m + 200 \beta m^2 <|V'_j|\left( \frac{1}{2} - \beta \right) m,
\end{multline*}
a contradiction, where the last inequality is because $|V_j'| \ge 0.02m$ and $\eps_2 \gg \beta$. If $j \equiv i - 1 \pmod{3}$, a similar argument holds by considering the out-degree of vertices in $V'_j$. Thus, we get $|V'_k| < (1/3 + \eps_2/4)m$ and hence $|V'_i| \ge m - |V'_j| - |V'_k| > (1/3 - \eps_2/2)m$.
\end{proof}

Finally, let $V_1 \ce V'_1 \cup \{V(\mH) \sm V'\}$, $V_2 \ce V_2'$, and $V_3 \ce V_3'$.
Consider the partition $\pi \ce (V_1,V_2,V_3)$ of $V(\mH)$.
Recall that $|V'| = m \ge (1-\beta /2)n$. By Claim~\ref{cla::sizeVp}, we have
$$
\left(\frac{1}{3} - \eps_2 \right)n < |V_1|,|V_2|,|V_3| < \left( \frac{1}{3} + \eps_2 \right)n.
$$
Recall that $T''$ is a subtournament of $T'$ induced by $V'$ and $\mH' = \mH(T')$. For every hyperedge $e$ in $\mH^{'\pi}_{bad}$, either $e \cap (V(\mH) \sm V') \neq \es$ or $e \in \mH^{''\pi'}_{bad}$. Hence, by Claim~\ref{cla::uppbadEdgesp}, we have
$$
|\mH^{'\pi}_{bad}|
\le |V(\mH) \sm V'| \cdot n^2 +  |\mH^{''\pi'}_{bad}|
\le \frac{\beta}{2} n^3 + 200 \beta m^3
\le \frac{\beta}{2} n^3 + 200 \beta n^3
\le 300 \beta n^3.
$$
Recall that $T$ and $T'$ differ by at most $\gamma n^2/2$ edges, so $\mH(T)$ and $\mH'$ differ by at most $\gamma n^3/2$ hyperedges. Also recall that $\mH \subseteq \mH(T)$. Thus,
\begin{equation*}
\left| \mH^\pi_{bad} \right|
\le |\mH(T)^\pi_{bad}|
\le |\mH^{'\pi}_{bad}| + \frac{\gamma}{2} n^3
\le 300 \beta n^3 + \frac{\gamma}{2} n^3
< \eps_1 n^3. \qedhere
\end{equation*}
\end{proof}

\section{Proof of Theorem~\ref{thm::main}} \label{sec::Pro}
In this section, we provide the proof for Theorem~\ref{thm::main}.
Recall that we defined in Section~\ref{sec::Cyc} that $\fCm$ is the set of all the pseudo-cycles minus one hyperedge of size $\l$, where $4 \le \l \le \L$ and $3 \nmid \l$, and $\ifCm = \bigcup_{\L \ge 4} \fCm$.
We will first prove that $\pi(\fCm) = 1/4$ for sufficiently large $\L$, see Proposition~\ref{pro::main}, and then convert it to Theorem~\ref{thm::main} using Theorem~\ref{thm::turanBlowup}.

If we could prove that for some $\L \ge 5$, the maximum $\fCm$-free $3$-graphs can be made free of $\ifCm$ by removing $o(n^3)$ hyperedges, then we get $\pi(\fCm) \le 1/4$ immediately by Lemmas~\ref{lem::numCycTri} and~\ref{lem::freifCmOri}. We are not able to show this. The following lemma is what we can achieve. Its proof is parallel to Propositions 6.3 and 6.4 in~\cite{kamvcev2022turan}.

\begin{lemma} \label{lem::rem}
If a $3$-graph $\mH$ is free of $\fCm$, where $\L \ge 27$, then we can delete at most $\frac{1}{2}\sqrt{\frac{21}{\L-26}}n^3$ hyperedges from $\mH$ to make it free of $\ifCm$.
\end{lemma}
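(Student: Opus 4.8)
The plan is to show that any $\fCm$-free $3$-graph $\mH$ on $n$ vertices contains relatively few copies of pseudo-cycles minus one hyperedge of \emph{any} forbidden length, so that deleting one hyperedge from each such copy with a long length removes only $o(n^3)$ edges while destroying all of $\ifCm$. More precisely, I would first observe that since $\mH$ is $\fCm$-free, it is in particular $\fB[(\L+2)]$-free by Lemma~\ref{lem::freifCmOri}, hence contains no bottle of size at most $\L+2$. The key point is that a pseudo-cycle minus one hyperedge of size $\l$ with $\l > \L$ that is \emph{not} itself a bottle, and more generally any long pseudo-structure, must either repeat a hyperedge many times or contain a short sub-pseudo-cycle-minus-one-hyperedge. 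The strategy, following Kam{\v c}ev--Letzter--Pokrovskiy, is: for each hyperedge $e \in \mH$, count how many ``short'' pseudo-paths of a controlled length start from a fixed pair inside $e$ and return in a way that would close up into a forbidden configuration; bounding this count lets one greedily delete a bounded-size ``responsible'' hyperedge for each long forbidden copy.

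Concretely, the steps I would carry out are as follows. First, set up the deletion target: it suffices to delete a set $E_0$ of hyperedges so that $\mH - E_0$ contains no pseudo-cycle minus one hyperedge of size $\l$ for \emph{any} $\l$ with $3 \nmid \l$ (not just $\l \le \L$); since $\mH$ is already $\fCm$-free we only worry about $\l > \L$. Second, show that if $\mH - E_0$ still contained such a long copy, then by a ``shortening'' argument one could extract from it a pseudo-cycle minus one hyperedge of size between, say, $\L - 26$ and $\L$ that avoids any small fixed hyperedge set — this is where the numerology $\L \ge 27$ and the $\L - 26$ in the bound enters, presumably because one walks along the long copy in blocks and uses that $3 \nmid \l$ can be arranged for one of two consecutive lengths, exactly as in the proof of Lemma~\ref{lem::freifCmOri}. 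Third, count: fix any pair $\{a,b\}$ of vertices; the number of pseudo-paths $ab u_3 u_4 \dots u_k$ of a given length $k \le \L$ in $\mH$ is at most $n^{k-2}$, but more usefully, the number of \emph{short} pseudo-cycles minus one hyperedge through a fixed hyperedge is controlled, and by an averaging/convexity argument the total number of forbidden copies of length in $[\L-26,\L]$ is at most roughly $\tfrac{21}{\L-26}\cdot\binom{n}{3}^2/\text{(something)}$ — the quantitative heart that yields the factor $\sqrt{21/(\L-26)}$ after taking a square root (a Cauchy--Schwarz step balancing ``edges deleted'' against ``copies destroyed''). Fourth, perform the greedy deletion: repeatedly pick a forbidden copy of controlled length, delete from it a single hyperedge lying in the ``dense'' part, and charge it; the square-root bound on the number of copies versus the $n$-fold multiplicity of each deleted edge gives that $|E_0| \le \tfrac12\sqrt{21/(\L-26)}\,n^3$.

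I expect the main obstacle to be the \textbf{shortening lemma} — proving that a \emph{long} pseudo-cycle minus one hyperedge, possibly with many repeated vertices and hyperedges, can always be cut down to a forbidden copy whose length lies in a prescribed window $[\L-26,\L]$ and which moreover avoids a prescribed small set of hyperedges (so that the greedy deletion actually makes progress). Handling repeated vertices is delicate: a surjective homomorphism from $\C_\l$ may fold the cycle badly, and one must argue that either the image already contains a short pseudo-cycle-minus-one-hyperedge (giving a contradiction with $\fCm$-freeness directly, modulo the $3 \nmid \l$ parity juggling) or else the walk is ``long and essentially embedded'', in which case consecutive length-$3$ windows along it behave like genuine tight-path segments and one can excise a sub-walk of the right residue class mod $3$. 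This parity bookkeeping — ensuring the excised length is not a multiple of $3$, which costs at most one or two in the length and hence the ``$-26$'' slack — together with the bookkeeping to avoid a fixed small hyperedge set is where I anticipate all the real work; the counting and greedy-deletion steps are then routine applications of convexity and Cauchy--Schwarz. Since the excerpt explicitly says the proof is parallel to Propositions 6.3 and 6.4 of~\cite{kamvcev2022turan}, I would mirror their block-decomposition of the walk and their charging scheme, adapting the constants to the ``minus one hyperedge'' setting.
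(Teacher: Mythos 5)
Your plan has a genuine gap at its central step, and it also misplaces where the bound $\sqrt{21/(\L-26)}$ comes from. The ``shortening lemma'' you rely on --- that in an $\fCm$-free graph any long member of $\ifCm$ can be cut down to one whose size lies in a window like $[\L-26,\L]$ --- is false for the untouched graph $\mH$. Take $\mH=\Cm[\l_1]$ itself with $\l_1>\L$ and $3\nmid\l_1$: a tight walk using at most $\L-1$ hyperedges only meets windows lying in a short subinterval of positions (the window $\{\l_1-1,0,1\}$ is absent and consecutive windows of a tight walk shift position by at most one), and on such a subinterval the positions give a rainbow $3$-colouring, which forces every closed tight walk of length at most $\L$ to have length divisible by $3$. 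So this $\mH$ is $\fCm$-free, is itself a member of $\ifCm$, yet contains no forbidden copy of size in $[\L-26,\L]$; no amount of shortening inside the unmodified graph can succeed. Relatedly, your counting step is vacuous (an $\fCm$-free graph contains \emph{zero} copies of the sizes in $[\L-26,\L]$ not divisible by $3$), and the greedy ``delete one edge per copy and charge it'' scheme never produces a bound on the number of deletions: nothing prevents each deleted hyperedge from destroying only one long copy, so no estimate of the form $O(n^3/\sqrt{\L})$ follows from what you wrote.

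The paper proceeds in the opposite order: it first \emph{cleans} and only then shortens, and the entire deletion budget is paid by the cleaning, not by a per-copy argument. With $\delta=\sqrt{21/(\L-26)}$, one repeatedly deletes, for any pair of vertices whose codegree lies in $(0,\delta n)$, all hyperedges through that pair; each pair is treated at most once, so at most $\binom{n}{2}\cdot\delta n\le\tfrac12\delta n^3$ hyperedges are removed, and in the resulting $\mHp$ every codegree is $0$ or at least $\delta n$. The square root has nothing to do with Cauchy--Schwarz over copies: along a shortest pseudo-path in $\mHp$ between two fixed pairs, the shortcut sets $M(v_i,v_{i+1})$ each have size at least $\delta^2n^2$ (two codegree steps) and are pairwise disjoint for indices divisible by $7$ (otherwise the path could be shortened), so the shortest pseudo-path has length less than $7/\delta^2+6=(\L-8)/3$. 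This shortening, applied to a hypothetical shortest long member of $\ifCm$ in $\mHp$, produces a shorter pseudo-cycle minus one hyperedge (hence of size divisible by $3$, by $\fCm$-freeness and minimality) and an auxiliary genuine pseudo-cycle of size $k+2s<\L$ (also divisible by $3$), and subtracting gives $3\mid\l$, a contradiction; thus $\mHp$ is free of all of $\ifCm$ with no further deletions. Your sketch anticipates the mod-$3$ bookkeeping, but it misses the codegree dichotomy, which is exactly the idea that makes shortening legitimate and simultaneously yields the stated deletion bound.
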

\begin{proof}
Let $\delta = \sqrt{\frac{21}{\L-26}}$. Do the following algorithm for $\mH$. Check whether there is a pair of vertices $\{u,v\}$ with codegree in $(0,\delta n)$. If so, delete all the hyperedges containing $\{u,v\}$ and check again; otherwise end the algorithm. Call the remaining hypergraph $\mHp$. In $\mHp$, every pair of vertices has codegree either $0$ or at least $\delta n$. Note that $\mHp$ is $\fCm$-free and $|\mHp| \ge |\mH| -\delta n^3/2$, since for every pair of vertices $\{u,v\}$, the algorithm does the removal at most once.
We will prove that $\mHp$ is free of $\ifCm$.

We first show that whenever $\mHp$ contains a pseudo-path $abu_1\ldots u_t cd$, for some $t \ge 0$, then $\mHp$ also contains a pseudo-path $abv_1\ldots v_k cd$, where $k < (\L-8)/3$. Let $\mP = abv_1\ldots v_k cd$ be the shortest pseudo-path starting from $a, b$ and ending with $c,d$.
For vertices $v,v'$, let $M(v,v') \ce \{(u,u') : \{v,v',u\}, \{v',u,u'\} \in \mHp\}$.
For $1 \le i < j \le k$, where both $i,j$ are divisible by $7$, we claim that $M(v_i, v_{i+1}) \cap M(v_j, v_{j+1}) = \es$. Otherwise, let $(w,w') \in M(v_i, v_{i+1}) \cap M(v_j, v_{j+1})$. Then,
$$abv_1 \ldots v_{i-1}v_iv_{i+1}ww'v_{j+1}wv_jv_{j+1}v_{j+2}\ldots v_k cd$$
is a shorter pseudo-path between $a,b$ and $c,d$ than $\mP$, a contradiction. By the codegree condition of $\mHp$, we have $|M(v_i,v_{i+1})| \ge \delta^2n^2$, for every $1\le i < k$. Therefore,
$$
n^2
> \left|\bigcup_{i\,:\,1\le i< k} M(v_i,v_{i+1})\right|
\ge \sum_{i\,:\,1\le i< k,\,7\,\mid\,i} \left|M(v_i,v_{i+1})\right|
\ge
\left\lfloor \frac{k}{7} \right\rfloor \cdot \delta^2 n^2
\ge \left(\frac{k}{7} - \frac{6}{7}\right) \cdot \delta^2 n^2,
$$
implying $k < 7/\delta^2 + 6 = (\L-8)/3$, as desired.

Now, assume for contradiction that $u_1u_2 \ldots u_{\l - 1} u_\l$ is the shortest pseudo-cycle minus one hyperedge in $\mHp$ with size $\l$ not divisible by $3$. Note that $\l > \L$.
Let $s = \lfloor (\L+1)/3 \rfloor + 1$. Since there is a pseudo-path $u_1u_2\ldots u_s u_{s+1}$ in $\mHp$, by the claim in the last paragraph, there is a pseudo-path $\mP = u_1u_2v_1v_2 \ldots v_k u_s u_{s+1}$, where $k < (L-8)/3$. Then, $\mP u_{s+2}\ldots u_{\l - 1} u_\l$ is a pseudo-cycle minus one hyperedge of size $k+4 + (\l -s -1) < \l$ in $\mHp$,
so $3\mid k+4 + (\l - s-1)$ and hence $3 \mid k+2s+\l$. Let $\mP'$ be the pseudo-path $u_s u_{s+1} u_{s-1} u_s u_{s-2} u_{s-1} \ldots u_4u_2u_3u_1u_2$. Then, $v_1 v_2 \ldots v_k \mP'$ is a pseudo-cycle of size $k+2s < \L$ in $\mHp$, so $3\mid k+2s$. However, we get $3 \mid \l$, a contradiction. Thus, $\mHp$ is free of $\ifCm$.
\end{proof}

\begin{lemma} \label{lem::weakTuranNum}
For every integer $\L \ge 27$, we have
$$
\ex(n, \fCm) \le \left(\frac{1}{24} + \frac{1}{2} \sqrt{\frac{21}{\L-26}} \right)n^3.
$$
\end{lemma}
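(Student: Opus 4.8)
The plan is to simply chain together the three ingredients already assembled: the removal step (Lemma~\ref{lem::rem}), the orientability criterion (Lemma~\ref{lem::freifCmOri}), and the Kendall--Smith bound on cyclic triangles (Lemma~\ref{lem::numCycTri}). Concretely, fix $\L \ge 27$, let $n$ be arbitrary, and let $\mH$ be any $n$-vertex $\fCm$-free $3$-graph; it suffices to bound $|\mH|$.

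First I would apply Lemma~\ref{lem::rem} to $\mH$: since $\mH$ is $\fCm$-free with $\L \ge 27$, we may delete at most $\tfrac{1}{2}\sqrt{\tfrac{21}{\L-26}}\,n^3$ hyperedges to obtain a subhypergraph $\mHp \subseteq \mH$ that is free of $\ifCm$. Next, since $\mHp$ is free of $\ifCm$, Lemma~\ref{lem::freifCmOri} tells us that $\mHp$ is orientable, so there is a tournament $T(\mHp)$ on $V(\mHp) = V(\mH)$ in which every hyperedge of $\mHp$ is a cyclic triangle; in particular $|\mHp| \le t(T(\mHp))$. Then Lemma~\ref{lem::numCycTri} gives $t(T(\mHp)) \le \tfrac{1}{24}(n^3-n) \le \tfrac{1}{24}n^3$ (the even case bound is even smaller, so $\tfrac{1}{24}n^3$ works uniformly). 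Combining,
\[
|\mH| \;\le\; |\mHp| + \frac{1}{2}\sqrt{\frac{21}{\L-26}}\,n^3 \;\le\; \left(\frac{1}{24} + \frac{1}{2}\sqrt{\frac{21}{\L-26}}\right)n^3,
\]
and since $\mH$ was an arbitrary $\fCm$-free $3$-graph on $n$ vertices, this is the claimed bound on $\ex(n,\fCm)$.

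There is essentially no obstacle left at this stage: all the substantive work has already been done in Lemma~\ref{lem::rem} (the cleaning argument bounding the number of hyperedges that must be removed) and in the combinatorial identity behind Lemma~\ref{lem::numCycTri}. The only points to be careful about are bookkeeping ones — checking that ``free of $\ifCm$'' is exactly the hypothesis needed to invoke Lemma~\ref{lem::freifCmOri}, and that deleting hyperedges preserves $\ifCm$-freeness (trivially true, since $\ifCm$-freeness is monotone under taking subhypergraphs). So I expect this proof to be a few lines, with the inequality above as its core.
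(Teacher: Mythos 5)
Your proposal is correct and follows exactly the paper's route: the paper proves this lemma by immediately combining Lemma~\ref{lem::rem}, Lemma~\ref{lem::freifCmOri}, and Lemma~\ref{lem::numCycTri}, precisely the chain you spell out. Your write-up just makes the bookkeeping explicit.
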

\begin{proof}
This follows immediately from Lemmas~\ref{lem::numCycTri},~\ref{lem::freifCmOri}, and~\ref{lem::rem}.
\end{proof}

We use a standard symmetrization method in the following lemmas to bound the degrees in a maximum $\fCm$-free $3$-graph.
For a $3$-graph $\mH$, a vertex set $S \subseteq V(\mH)$, and a vertex $v \in V(\mH)\sm S$, let $T_{S,v} = \{v_1,\ldots, v_{|S|}\}$ be a set such that $T_{S,v} \cap V(\mH) = \es$, and let $\mH_{S,v}$ be the $3$-graph on vertex set $(V(\mH) \sm S) \cup T_{S,v}$ with hyperedges
$$\left(\mH \sm \{e \in \mH: e \cap S \neq \es\}\right) \bigcup
\left( \bigcup_{i=1}^{|S|} \{\{v_i, x, y\}: \{v, x, y\} \in \mH, \{x,y\} \cap S = \es\}  \right)
.
$$
We write $\mH_{u,v}$ for $\mH_{\{u\},v}$.

\begin{lemma} \label{lem::sym}
If a $3$-graph $\mH$ is $\fCm$-free for some $\L \ge 4$, then $\mH_{S,v}$ is also $\fCm$-free, for every vertex set $S \subseteq V(\mH)$ and vertex $v \in V(\mH) \sm S$.
\end{lemma}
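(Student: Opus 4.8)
The plan is to fold the clones back onto $v$. Define $\phi \colon V(\mH_{S,v}) \to V(\mH)$ by setting $\phi(v_i) = v$ for every clone $v_i \in T_{S,v}$ and $\phi(x) = x$ for $x \in V(\mH)\setminus S$, and then show that $\phi$ sends any copy of a member of $\fCm$ sitting inside $\mH_{S,v}$ to a copy of the \emph{same} member inside $\mH$. Since $\mH$ is $\fCm$-free, this forces $\mH_{S,v}$ to be $\fCm$-free as well. (When $S = \es$ we have $\mH_{S,v} = \mH$ and there is nothing to prove, so assume $S \neq \es$.)

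The one structural fact I would isolate first is: every hyperedge of $\mH_{S,v}$ contains at most one clone, and no hyperedge of $\mH_{S,v}$ contains a clone together with $v$ itself. Indeed, by the definition of $\mH_{S,v}$ a hyperedge is either an old edge $e \in \mH$ with $e \cap S = \es$, which contains no clone at all, or an edge of the form $\{v_i,x,y\}$ arising from some $\{v,x,y\} \in \mH$ with $\{x,y\}\cap S = \es$; in the latter case $x$ and $y$ are distinct, both different from $v$, and both lie in $V(\mH)\setminus S$. It follows that for every hyperedge $e$ of $\mH_{S,v}$ the image $\phi(e)$ is again a set of size three and $\phi(e) \in \mH$: either $\phi(e) = e \in \mH$, or $e = \{v_i,x,y\}$ and $\phi(e) = \{v,x,y\} \in \mH$.

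With this in hand, suppose for contradiction that $\mH_{S,v}$ contains, as a subhypergraph, some $\mathcal{K} \in \fCm$, say a pseudo-cycle minus one hyperedge of size $\ell$ with $4 \le \ell \le \L$ and $3 \nmid \ell$, and fix a surjective homomorphism $g \colon \Cm \to \mathcal{K}$. Put $w_j = g(j)$ and $w_j' = \phi(w_j)$ for $0 \le j \le \ell-1$, and let $\mathcal{K}'$ be the $3$-graph on vertex set $\{w_0',\dots,w_{\ell-1}'\}$ whose edges are the triples $\{w_i', w_{i+1}', w_{i+2}'\}$, indices taken mod $\ell$, for $0 \le i \le \ell-2$. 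Each such triple is the $\phi$-image of the hyperedge $g(\{i,i+1,i+2\}) \in \mathcal{K} \subseteq \mH_{S,v}$, hence by the previous paragraph it is a genuine hyperedge of $\mH$; thus $\mathcal{K}' \subseteq \mH$. Moreover $j \mapsto w_j'$ is by construction a surjective homomorphism from $\Cm$ onto $\mathcal{K}'$, so $\mathcal{K}'$ is a pseudo-cycle minus one hyperedge of size $\ell$ and hence $\mathcal{K}' \in \fCm$, contradicting that $\mH$ is $\fCm$-free.

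The crux — and the only place anything could go wrong — is the claim that $\phi$ never collapses a hyperedge of $\mH_{S,v}$ to a pair; if that failed, some triple $\{w_i',w_{i+1}',w_{i+2}'\}$ would not be a $3$-edge of $\mH$ and the image configuration need not be an honest copy of a member of $\fCm$. But this is exactly the assertion that no edge of $\mH_{S,v}$ carries two clones, or a clone together with $v$, which is transparent from the construction of $\mH_{S,v}$; the rest of the argument is just unwinding the definition of surjective homomorphism.
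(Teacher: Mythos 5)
Your proof is correct and is essentially the paper's own argument: the paper likewise notes that in $\mH_{S,v}$ the codegree of $v$ with any clone, and of any two clones in $T_{S,v}$, is zero (your structural fact), and then replaces all clones by $v$ to obtain a pseudo-cycle minus one hyperedge of the same size in $\mH$, contradicting $\fCm$-freeness. Your only extra care, spelling out that the collapsed image is again a (possibly further identified) member of $\fCm$ via the composed homomorphism, is exactly the intended justification.
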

\begin{proof}
Assume for contradiction that $\mH_{S,v}$ is not $\fCm$-free. Let $v_1v_2 \ldots v_\l$ be a copy of some $\C^- \in \fCm$ in $\mH_{S,v}$.
Note that the codegree of $v,u'$ and the codegree of $u',u''$ are zero in $\mH_{S,v}$, for every $u' \neq u'' \in T_{S,v}$. Replacing all appearances of the new vertices in $T_{S,v}$ with $v$ in $v_1v_2\ldots v_\l$, we get a copy of $\C^- \in \fCm$ in $\mH$, a contradiction to that $\mH$ is $\fCm$-free.
\end{proof}

\begin{lemma} \label{lem::degl}
There exists a constant $N> 0$ such that the following is true for every $\L \ge 4$ and $n > N$. Let $\mH$ be a maximum $n$-vertex $\fCm$-free $3$-graph. For every vertex $v \in V(\mH)$, we have $d(v) \ge n^2/8 - 2n$.
\end{lemma}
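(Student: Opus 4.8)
The plan is to run the standard symmetrization argument against maximality. First I would record a lower bound on $|\mH|$: since $\mH$ is a maximum $\fCm$-free $3$-graph on $n$ vertices and the iterated blow-up $\E_n$ of Construction~\ref{con::iteEdge} is $\ifCm$-free, and hence in particular $\fCm$-free, we have $|\mH| \ge |\E_n| \ge n^3/24 - Cn\log n$, where $C>0$ is the absolute constant mentioned after Construction~\ref{con::iteEdge}. Counting each hyperedge three times in $\sum_{w \in V(\mH)} d(w) = 3|\mH|$ and averaging, a vertex $u$ of maximum degree satisfies
$$
d(u) \ge \frac{3|\mH|}{n} \ge \frac{n^2}{8} - 3C\log n.
$$

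Next, fix an arbitrary vertex $v \in V(\mH)$ and consider the symmetrized $3$-graph $\mH_{v,u} = \mH_{\{v\},u}$ in the notation introduced before Lemma~\ref{lem::sym}; this deletes $v$ and introduces a single fresh vertex $v'$ that is a clone of $u$ through the part of the link of $u$ avoiding $v$. It has $n$ vertices, and by Lemma~\ref{lem::sym} it is $\fCm$-free, so maximality of $\mH$ forces $|\mH_{v,u}| \le |\mH|$. On the other hand, a direct count gives $|\mH_{v,u}| = |\mH| - d(v) + \bigl(d(u) - d(u,v)\bigr)$: we delete the $d(v)$ hyperedges meeting $v$ and add exactly the hyperedges $\{v',x,y\}$ arising from hyperedges $\{u,x,y\} \in \mH$ with $v \notin \{x,y\}$, of which there are $d(u) - d(u,v)$, and these are all new since each contains the fresh vertex $v'$. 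Combining the two facts,
$$
d(v) \ge d(u) - d(u,v) \ge d(u) - (n-2).
$$

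Plugging in the bound on $d(u)$ yields $d(v) \ge n^2/8 - 3C\log n - (n-2) \ge n^2/8 - 2n$ as soon as $n$ is large enough that $3C\log n \le n+2$; this determines the constant $N$, which depends only on $C$ and in particular not on $\L$.

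There is no serious obstacle; the only points needing a little care are (i) checking that $\mH_{v,u}$ is precisely an instance of the operation covered by Lemma~\ref{lem::sym} (so that $\fCm$-freeness is preserved), (ii) the elementary but slightly fiddly count of $|\mH_{v,u}|$, in particular that the added hyperedges are disjoint from the surviving ones because they all contain $v'$, and (iii) choosing $N$ large enough to absorb the $Cn\log n$ error term coming from the bound on $|\E_n|$.
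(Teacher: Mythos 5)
Your proof is correct and takes essentially the same route as the paper: average the degree bound coming from Construction~\ref{con::iteEdge} to find a vertex $u$ with $d(u) \ge n^2/8 - 3C\log n$, then use the symmetrization $\mH_{v,u}$, Lemma~\ref{lem::sym}, and maximality to conclude $d(v) \ge d(u) - d(u,v)$; the paper merely phrases this as a contradiction with a slightly cruder count. The only point to add is the trivial case $v = u$, where the conclusion holds directly without symmetrizing.
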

\begin{proof}
Assume for contradiction that there exists $v \in V(H)$ with $d(v) < n^2/8 - 2n$. By Construction~\ref{con::iteEdge}, $|\mH| \ge n^3/24 - Cn \log n$ for some constant $C> 0$. Hence, there is $x \in V(\mH)$ such that $d(x) \ge n^2 / 8 - 3C \log n$. By Lemma~\ref{lem::sym}, $\mH_{v,x}$ is also $\fCm$-free.
However, we have
$$
|\mH_{v,x}| > |\mH| - (n^2/8 -2n) + (n^2/8 - 3C \log n) - n > |\mH|,
$$
a contradiction to the maximality of $\mH$.
\end{proof}

\begin{lemma} \label{lem::degu}
For every $\eps > 0$, there exist $\delta \ge \eps^2/100$, $\L_0$, and $N$ such that the following is true for every $\L > \L_0$ and $n > N$. Let $\mH$ be an $n$-vertex $\fCm$-free $3$-graph with $|\mH| \ge (1/24 - \delta) n^3$. For every vertex $v \in V(\mH)$, we have $d(v) \le (1/8+\eps)n^2$.
\end{lemma}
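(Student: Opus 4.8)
The plan is to argue by contradiction: suppose some vertex $v$ has $d(v) > (1/8 + \eps)n^2$. Since $|\mH| \ge (1/24 - \delta)n^3$ with $\delta$ small, Lemma~\ref{lem::degl} (together with a symmetrization step as in Lemma~\ref{lem::sym}) lets me assume that \emph{every} vertex of $\mH$ has degree close to $n^2/8$ from below; more precisely, after passing to a maximum $\fCm$-free hypergraph one has $d(u) \ge n^2/8 - 2n$ for all $u$, and by the symmetrization operation $\mH_{v,x}$ I may even assume $\mH$ is "degree-regularized" so that if one vertex had degree $(1/8+\eps)n^2$ I could clone it, pushing the total edge count above $(1/24 + \text{something})n^3$. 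The key point is that Lemma~\ref{lem::weakTuranNum} gives the ceiling: for $\L$ large, $\ex(n,\fCm) \le (1/24 + o_\L(1))n^3$, so if cloning the high-degree vertex enough times would create an $n$-vertex hypergraph (or one can rescale to $n$ vertices) with edge density exceeding $1/24 + \tfrac12\sqrt{21/(\L-26)}$, we reach a contradiction once $\L_0$ is chosen large enough relative to $\eps$.

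Concretely, the first step is: starting from $\mH$ with $d(v) > (1/8+\eps)n^2$, repeatedly apply the symmetrization $\mH \mapsto \mH_{u,v}$ to replace a constant fraction $cn$ of the vertices $u$ (those of low degree, or just arbitrary vertices) by clones of $v$. By Lemma~\ref{lem::sym} each such operation preserves $\fCm$-freeness. I need to track how the edge count changes: cloning $v$ into a set $S$ of size $cn$ produces roughly $c \cdot d(v)$ extra edges per clone from the link of $v$ (minus lower-order terms for edges meeting $S$), while we lose the edges incident to the replaced vertices. Since we may choose to replace vertices of degree at most the average $\approx n^2/8$, the net change per swap is at least $\approx (1/8 + \eps)n^2 - (1/8)n^2 = \eps n^2$ up to $O(n)$ corrections. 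After $cn$ swaps we gain $\gtrsim c\eps n^3$ edges, landing at $|\mH'| \ge (1/24 - \delta + c\eps)n^3$ on an $n$-vertex $\fCm$-free hypergraph.

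The second step is to choose the parameters so this contradicts Lemma~\ref{lem::weakTuranNum}: pick $c$ an absolute constant (say $c = 1/2$, replacing the $n/2$ vertices of smallest degree), then we need $-\delta + \tfrac12\eps > \tfrac12\sqrt{21/(\L-26)}$, which holds once $\delta \le \eps^2/100 < \eps/4$ and $\L_0$ is large enough that $\tfrac12\sqrt{21/(\L_0 - 26)} < \eps/4$. This forces $\ex(n,\fCm) > (1/24 + \tfrac12\sqrt{21/(\L-26)})n^3$, contradicting Lemma~\ref{lem::weakTuranNum}, and we are done. The constant $\delta \ge \eps^2/100$ in the statement is compatible with the only real constraint $\delta < \eps/4$, with room to spare.

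The main obstacle I anticipate is the bookkeeping in the symmetrization step: when we clone $v$ onto a large set $S$ of linear size, the hyperedges of $\mH$ that are contained in $S$ or meet $S$ in two vertices are \emph{deleted} and not replaced (the construction of $\mH_{S,v}$ only re-adds edges $\{v_i,x,y\}$ with $\{x,y\}\cap S = \es$), so the naive gain of $c \cdot d(v) \cdot n$ is reduced. One must verify that this loss is only $O(|S|^2 n) = O(c^2 n^3)$, which is acceptable if $c$ is chosen small enough relative to $\eps$ — so in fact one should take $c \asymp \eps$, not $c = 1/2$, giving a net gain of order $\eps^2 n^3$, and this is exactly why the quantitative bound $\delta \ge \eps^2/100$ appears. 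A cleaner alternative that sidesteps the large-$S$ subtlety is to do the swaps one vertex at a time, as in the proof of Lemma~\ref{lem::degl}, re-selecting the minimum-degree vertex each time; then each swap changes the edge count in a controlled way by $O(n)$-error terms, and after $\Theta(\eps n)$ swaps the density has risen by $\Theta(\eps^2)$. I would present the one-at-a-time version to keep the estimates transparent.
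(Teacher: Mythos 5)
Your proposal is essentially the paper's own proof: you clone the high-degree vertex onto a set of $\Theta(\eps)n$ below-average-degree vertices via the symmetrization of Lemma~\ref{lem::sym} (the paper does this in a single shot with a set $S$ of size $\lfloor\sqrt{\delta}\,n\rfloor$ drawn from the vertices of degree at most $(1/8+3\sqrt{\delta})n^2$, rather than one swap at a time) and then contradict Lemma~\ref{lem::weakTuranNum} after taking $\L_0$ so large that $\tfrac12\sqrt{21/(\L-26)}\le \eps^2/100$, and your final-paragraph correction $c\asymp\eps$, giving a net gain of order $\eps^2 n^3$, is exactly the paper's bookkeeping. The detour in your first paragraph through maximality and Lemma~\ref{lem::degl} is unnecessary and should be dropped (the lemma must apply to hypergraphs that are not maximum, e.g.\ induced subhypergraphs later in the paper), but your actual counting argument never relies on it.
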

\begin{proof}
Without loss of generality, we can assume $\eps$ is sufficiently small.
Let $\delta = \eps^2/100$, and let $\L_0 > 21\cdot 50^2\cdot (1/\eps)^4 + 26$ and $N$ be sufficiently large. Let $\L > \L_0$.
Note that for every $\fCm$-free $3$-graph $\F$, by Lemma~\ref{lem::weakTuranNum}, we have $|\F| \le (1/24 + \delta)n^3$.

Let $\mH$ be an $n$-vertex $\fCm$-free $3$-graph with $|\mH| \ge (1/24 - \delta) n^3$.
Assume for contradiction that a vertex $v\in V(\mH)$ has $d(v) > (1/8+\eps)n^2$.
Let $S_0 = \{u \in V(\mH): d(u) \le (1/8+3\sqrt{\delta}) n^2\}$. We have
$$
|V(\mH) \sm S_0| \cdot \left(\frac{1}{8} + 3\sqrt{\delta} \right) n^2 \cdot \frac{1}{3} \le |\mH|  < \left( \frac{1}{24} + \delta \right)n^3,
$$
so
$$
|S_0| > \frac{\sqrt{\delta} - \delta}{\frac{1}{24} + \sqrt{\delta}} n > \sqrt{\delta} n.
$$
Let $S$ be a subset of $S_0$ with size $\lfloor\sqrt{\delta} n \rfloor$. Note that $v \notin S_0 \supseteq S$.
By Lemma~\ref{lem::sym}, $\mH_{S,v}$ is $\fCm$-free. However, we have
\begin{align*}
|\mH_{S,v}| &> |\mH| - |S|\cdot \left(\frac{1}{8} + 3\sqrt{\delta}\right) n^2 + |S| \cdot \left(\left( \frac{1}{8} + \eps \right)n^2 - |S|\cdot n \right) \\
&\ge \left(\frac{1}{24} - \delta \right) n^3 + |S|\left(\eps - 4\sqrt{\delta}\right)n^2
 \ge \left(\frac{1}{24} - \delta \right) n^3 + \left(\sqrt{\delta} n - 1\right)\left(\eps - 4\sqrt{\delta}\right)n^2
 \\
& \ge \left(\frac{1}{24} - \delta + \sqrt{\delta}\left(\eps - 4\sqrt{\delta}\right)\right) n^3 - \eps n^2
> \left( \frac{1}{24} + \delta \right)n^3
,
\end{align*}
a contradiction. Therefore, $d(v) \le (1/8+\eps)n^2$ for every vertex $v \in V(\mH)$.
\end{proof}

\begin{proposition} \label{pro::main}
For every sufficiently large $\L$, we have $\pi(\fCm) = 1/4$.
\end{proposition}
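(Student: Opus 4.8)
The lower bound $\pi(\fCm)\ge 1/4$ is immediate from Construction~\ref{con::iteEdge}: the iterated blow-up $\E_n$ is $\ifCm$-free, hence $\fCm$-free, and has at least $n^3/24-Cn\log n$ hyperedges, so $\ex(n,\fCm)/\binom{n}{3}\to 1/4$ from below; it therefore suffices to prove $\ex(n,\fCm)\le n^3/24+o(n^3)$. Throughout, fix $\L$ large enough that Lemmas~\ref{lem::weakTuranNum},~\ref{lem::degl},~\ref{lem::degu},~\ref{lem::rem} and Proposition~\ref{pro::sta} apply with the small parameters chosen below, and in particular so that $\tfrac{1}{2}\sqrt{21/(\L-26)}$ is small compared with the $\delta$ coming from Proposition~\ref{pro::sta}. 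The plan is: pass to an orientable subhypergraph, use stability to find an almost-balanced $3$-partition with few bad hyperedges, then show by a cleaning argument that a \emph{maximum} $\fCm$-free hypergraph has \emph{no} bad hyperedge with respect to that partition, and finally close by induction on $n$.

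Let $\mH$ be a maximum $n$-vertex $\fCm$-free $3$-graph with $n$ large. Since $|\mH|\ge|\E_n|\ge n^3/24-Cn\log n$, Lemmas~\ref{lem::degl} and~\ref{lem::degu} give $n^2/8-2n\le d(v)\le(1/8+\eps)n^2$ for every $v\in V(\mH)$, with $\eps$ a small constant of our choice. By Lemma~\ref{lem::rem} we may delete at most $\tfrac{1}{2}\sqrt{21/(\L-26)}\,n^3$ hyperedges of $\mH$ to obtain $\mHp\subseteq\mH$ that is free of $\ifCm$, hence orientable (Lemma~\ref{lem::freifCmOri}) and $\Cm[5]$-free, with $|\mHp|\ge(1/24-\delta)n^3$ for $n$ large. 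Proposition~\ref{pro::sta}, applied to $\mHp$, gives a $3$-partition $\pi=(V_1,V_2,V_3)$ of $V(\mH)$ with $(1/3-\eps_2)n<|V_1|,|V_2|,|V_3|<(1/3+\eps_2)n$ and fewer than $\eps_1n^3$ bad hyperedges of $\mHp$ with respect to $\pi$; since $\mHp\subseteq\mH$, it follows that $\mH$ has at most $(\eps_1+\tfrac{1}{2}\sqrt{21/(\L-26)})\,n^3$ bad hyperedges with respect to $\pi$. Combining this with $|\mH|\ge n^3/24-o(n^3)$, with $|\mH[V_i]|\le\ex(|V_i|,\fCm)\le(\tfrac{1}{24}+\tfrac{1}{2}\sqrt{21/(\L-26)})|V_i|^3$ (Lemma~\ref{lem::weakTuranNum}), and with $|V_1||V_2||V_3|\le(n/3)^3$, a short computation shows that the cross part $\mH[V_1,V_2,V_3]$ is almost complete tripartite: its complement inside $[V_1,V_2,V_3]$ has at most $\alpha n^3$ hyperedges for some small $\alpha=\alpha(\eps_1,\eps_2,\L)$.

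The crux — and the step I expect to be the main obstacle — is the cleaning argument: the maximum $\mH$ has \emph{no} bad hyperedge with respect to $\pi$. Suppose $\{a,b,c\}\in\mH$ is bad, say $a,b\in V_1$ and $c\in V_2$. The idea is to attach four vertices $z_1\in V_3$, $z_2\in V_1$, $z_3\in V_2$, $z_4\in V_3$ so that the sequence $a,b,c,z_1,z_2,z_3,z_4$ spells out a pseudo-cycle minus one hyperedge of size $7$: apart from $\{a,b,c\}$, its hyperedges are $\{b,c,z_1\}$, $\{c,z_1,z_2\}$, $\{z_1,z_2,z_3\}$, $\{z_2,z_3,z_4\}$, $\{z_3,z_4,a\}$, each of which meets all three parts and hence belongs to the almost-complete-tripartite cross part. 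As $7\not\equiv 0\pmod 3$ and $7\le\L$, this copy lies in $\fCm$, contradicting $\fCm$-freeness of $\mH$. The difficulty is that a prescribed pair such as $\{b,c\}$ may have small or even zero codegree into the part we must enter next, even though the cross part misses only $\alpha n^3$ hyperedges in total; to handle this I would use the degree lower bound of Lemma~\ref{lem::degl} together with the density of the cross part to show that in each $V_i$ all but a vanishing fraction of the vertices, and all but a vanishing fraction of the pairs, are ``typical'', and then route the attachment around the exceptions, exploiting the freedom in the three cyclic rotations of $\{a,b,c\}$ and, if necessary, a longer cycle whose length is still at most $\L$ and not divisible by $3$.

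Granting that the maximum $\mH$ has no bad hyperedge with respect to $\pi$, we finish by induction on $n$: we claim $\ex(n,\fCm)\le n^3/24+Kn^2$ for a suitable constant $K$. For $n$ below the threshold of Proposition~\ref{pro::sta} this holds because $\ex(n,\fCm)\le\binom{n}{3}$, provided $K$ is large. For larger $n$, each $\mH[V_i]$ is $\fCm$-free, so
\[
\ex(n,\fCm)=|\mH|=|\mH[V_1,V_2,V_3]|+\sum_{i=1}^{3}|\mH[V_i]|\le|V_1||V_2||V_3|+\sum_{i=1}^{3}\ex(|V_i|,\fCm).
\]
The identity $x(y-z)^2+y(z-x)^2+z(x-y)^2=(x^2y+x^2z+y^2x+y^2z+z^2x+z^2y)-6xyz$ shows that $xyz+\tfrac{1}{24}(x^3+y^3+z^3)\le\tfrac{1}{24}(x+y+z)^3$ for all $x,y,z\ge 0$, hence $|V_1||V_2||V_3|+\tfrac{1}{24}\sum_i|V_i|^3\le n^3/24$. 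Plugging in the inductive hypothesis $\ex(|V_i|,\fCm)\le|V_i|^3/24+K|V_i|^2$ and using $\sum_i|V_i|^2\le n^2$ yields $\ex(n,\fCm)\le n^3/24+Kn^2$, completing the induction. Therefore $\ex(n,\fCm)\le n^3/24+O(n^2)$, so $\pi(\fCm)\le 1/4$, and with the lower bound $\pi(\fCm)=1/4$.
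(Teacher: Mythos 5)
Your overall skeleton matches the paper's: lower bound from Construction~\ref{con::iteEdge}, pass to an $\ifCm$-free (hence orientable) subhypergraph via Lemma~\ref{lem::rem}, apply Proposition~\ref{pro::sta}, show a maximum $\fCm$-free $\mH$ has no bad hyperedge with respect to a suitable $3$-partition, and close by induction (your global inequality $xyz+\tfrac{1}{24}(x^3+y^3+z^3)\le\tfrac{1}{24}(x+y+z)^3$ is a clean way to finish that last step, and the $Kn^2$ error term works just as well as the paper's $M^2n$). However, there is a genuine gap exactly where you flag it: the cleaning step is not proved, and the idea you sketch does not survive the hard case. Your plan is to embed a $\Cm[7]$ through a putative bad hyperedge $\{a,b,c\}$ using five crossing hyperedges, handling failures by ``routing around'' atypical vertices and using the cyclic rotations of $\{a,b,c\}$. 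But the obstruction is not that the attachment vertices $z_1,\dots,z_4$ might be atypical (those you can indeed route around); it is that the bad hyperedge itself may consist entirely of atypical vertices. The stability partition only controls the total number of missing cross triples, so up to $\Theta(\sqrt{\eps}\,n)$ vertices of $V_1$ may have essentially empty link into $V_2\times V_3$ (for instance, vertices that ``belong'' to another part), and all bad hyperedges of $\mH$ may be concentrated on such vertices; then every pair among $\{a,b\},\{b,c\},\{a,c\}$ can have zero codegree into the part your cycle must enter next, for every rotation and every admissible length $\le\L$, so no direct embedding argument of this shape can start.

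The paper's proof spends most of Section~\ref{sec::Pro} overcoming precisely this: it first uses maximality and the observation that replacing $\mH^{\pi}_{bad}$ by $\bar{\mH}_\pi$ keeps the graph $\fCm$-free to force $|\bar{\mH}_\pi|\le|\mH^\pi_{bad}|$; it then \emph{re-partitions} the atypical vertices, proving (via Lemma~\ref{lem::degu} applied to $\mH[V_i\cup\{v\}]$, the Erd\H{o}s--Faudree--Pach--Spencer stability Theorem~\ref{thm::manSta} for the triangle-free link graphs, Lemma~\ref{lem::weakTuranNum}, and Claims~\ref{cla::OneLargeOneSmall}--\ref{cla::mHVi}) that every exceptional vertex has a nearly complete cross-link into exactly one pair of parts, and moves it there; and only then does it kill the bad hyperedges of the refined partition $\pi'$ --- not by building a long cycle through them, but by a counting/exchange argument: $\K_4^-$-freeness plus Claim~\ref{cla::allGood} show each bad hyperedge generates many missing cross triples with low multiplicity, so $|\mH^{\pi'}_{bad}|\le\tfrac12|\bar{\mH}_{\pi'}|$, and swapping would strictly increase $|\mH|$ unless $\mH^{\pi'}_{bad}=\es$. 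Without some version of this re-partitioning plus maximality-exchange mechanism (or another way to handle bad hyperedges supported on atypical vertices), your argument does not go through; everything before and after that step in your write-up is fine.
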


\begin{proof}
For every integer $\L \ge 4$, by Construction~\ref{con::iteEdge}, we have $\ex(n,\fCm) \ge n^3/24 -C n \log n$ for some absolute constant $C >0$.

For the upper bound, fix $\eps$ to be $10^{-10000}$ and let $\L,M$ be sufficiently large integers. We will prove that for every $n \ge 1$, we have
\begin{equation} \label{equ::ex}
\ex(n,\fCm) \le n^3/24 + M^2n.
\end{equation}
When $n \le M$,~\eqref{equ::ex} is trivial, since $M^2n > \b{n}{3}$. Now assume that $n > M$ and~\eqref{equ::ex} is true for every positive integer less than $n$.
Recall that for a $3$-graph $\mH$ and a partition $\pi = (V_1,V_2,V_3)$ of $V(\mH)$, we defined $\mH_\pi \ce \mH[V_1,V_2,V_3]$ and $\bar{\mH}_\pi \ce [V_1,V_2,V_3] \sm \mH_\pi$ in Section~\ref{sec::Pre} and $\mH^\pi_{bad} \ce \bigcup_{1\le i \neq j \le 3} \mH[V_i, V_i, V_j]$ in Section~\ref{sec::Sta}.

Let $\mH$ be a maximum $n$-vertex $\fCm$-free $3$-graph.
By Construction~\ref{con::iteEdge}, $|\mH| \ge n^3/24 -C n \log n$.
By Lemmas~\ref{lem::degl} and~\ref{lem::degu}, we have for every vertex $v \in V(\mH)$,
\begin{equation} \label{equ::deg}
\left(1/8 - \eps \right) n^2 < d(v) < \left(1/8 + \eps \right) n^2.
\end{equation}
By Lemma~\ref{lem::rem}, there exists a $\ifCm$-free subhypergraph $\mHp \subseteq \mH$ with $|\mH \sm \mHp| \le \frac{1}{2}\sqrt{\frac{21}{\L-26}} n^3$, so $|\mHp| \ge \left(\frac{1}{24} - \frac{1}{2}\sqrt{\frac{21}{\L-26}}\right)n^3 -C n \log n$.
By Lemma~\ref{lem::freifCmOri}, $\mHp$ is orientable.
Hence, we can apply Proposition~\ref{pro::sta} to $\mHp$ and get a $3$-partition $\pi = (V_1,V_2,V_3)$ of $V(\mHp) = V(\mH)$ such that
\begin{equation} \label{equ::sizeV}
\left(1/3 - \eps\right)n < |V_1|,|V_2|,|V_3| < \left(1/3 + \eps\right)n
\end{equation}
and $|\mH^{'\pi}_{bad}| < \eps n^3/2$. Then, $|\mH^\pi_{bad}| < \eps n^3/2 + \frac{1}{2}\sqrt{\frac{21}{\L-26}} n^3 < \eps n^3$.
Note that $(\mH \sm \mH^\pi_{bad}) \cup \bar{\mH}_\pi$ is still $\fCm$-free.
By the maximality of $\mH$, we have
\begin{equation} \label{equ::sizeBarBad}
|\bar{\mH}_\pi| \le |\mH^\pi_{bad}| < \eps n^3.
\end{equation}
For $\{i,j,k\} = \{1,2,3\}$, we define
\begin{alignat*}{2}
&A_i &&\ce \{x \in V_i\;|\; d_{V_j,V_k}(x) \ge |V_j||V_k| - \eps^{1/2} n^2\}, \quad B_i \ce V_i \sm A_i,\\
&A   &&\ce A_1 \cup A_2 \cup A_3, \quad \textrm{and} \quad B \ce B_1 \cup B_2 \cup B_3.
\end{alignat*}

\begin{claim} \label{cla::sizeB}
We have $|B_i| \le \eps^{1/2} n$ for $i \in [3]$.
\end{claim}
\begin{proof}
If $|B_i|> \eps^{1/2} n$ for some $i \in [3]$, then $|\bar{\mH}_\pi| \ge |B_i| \cdot \eps^{1/2} n^2 >  \eps n^3$, contradicting~\eqref{equ::sizeBarBad}.
\end{proof}

\begin{claim} \label{cla::OneLargeOneSmall}
Let $\{i,j,k\} = [3]$.
For vertices $v_1, v_2 \in V(\mH)$ with $d_{V_j,V_k}(v_1) \ge 2\eps^{1/4}n^2$, we have
\begin{equation} \label{equ::mainCleaningClaim}
d_{V_i}(v_1,v_2)
\le
\frac{3n}{d_{V_j,V_k}(v_1)} \left( |V_i||V_k| - d_{V_i,V_k}(v_2)  \right) + 2 \eps^{1/2}n
.
\end{equation}
\end{claim}
\begin{proof}
Assume for contradiction that~\eqref{equ::mainCleaningClaim} is false for some $v_1,v_2 \in V(\mH)$. We will prove that $\mH$ contains a copy of $\Cm[5]$.
Let $Z = \{z \in V_k: d_{V_j}(v_1,z) \ge d_{V_j,V_k}(v_1) / n\}$. Then, by the upper bounds on $|V_j|,|V_k|$ in~\eqref{equ::sizeV}, we have
$$
d_{V_j,V_k}(v_1) \le |Z| \cdot |V_j| + |V_k \sm Z| \cdot \frac{d_{V_j,V_k}(v_1)}{n} \le |Z| \cdot \left(\frac{1}{3} + \eps\right) n + \left(\frac{1}{3} + \eps \right) d_{V_j,V_k}(v_1),
$$
so
\begin{equation} \label{equ::lowerBoundZ}
|Z|
\ge \frac{\frac{2}{3}-\eps}{\left(\frac{1}{3} + \eps \right) n} \cdot d_{V_j,V_k}(v_1)
\ge \frac{3}{2n}d_{V_j,V_k}(v_1).
\end{equation}
Let $X = \{x \in N_{V_i}(v_1,v_2): d_{Z}(x,v_2) \ge |Z|/2 \}$. If $|X| \le d_{V_i}(v_1,v_2)/2$, then by~\eqref{equ::lowerBoundZ} and our assumptions that~\eqref{equ::mainCleaningClaim} is false and $d_{V_j,V_k}(v_1) \ge 2\eps^{1/4}n^2$, we have
\begin{align*}
|V_i||V_k| - d_{V_i,V_k}(v_2)
&\ge |N_{V_i}(v_1,v_2) \sm X| \cdot \left( |Z| - \frac{|Z|}{2} \right)
\ge \frac{1}{2} d_{V_i}(v_1,v_2) \cdot \frac{3}{4n}d_{V_j,V_k}(v_1) \\
&\ge \frac{3}{8n} d_{V_j,V_k}(v_1) \left(\frac{3n}{d_{V_j,V_k}(v_1)} \left( |V_i||V_k| - d_{V_i,V_k}(v_2)  \right) + 2 \eps^{1/2}n\right) \\
d_{V_j,V_k}(v_1)
&\ge \frac{9}{8}\left(|V_i||V_k| - d_{V_i,V_k}(v_2)\right) + \frac{3}{2}\eps^{3/4}n^2
> |V_i||V_k| - d_{V_i,V_k}(v_2)
,
\end{align*}
a contradiction, so
$
|X| > d_{V_i}(v_1,v_2)/2 \ge \eps^{1/2}n^2.
$
By Claim~\ref{cla::sizeB}, $|B_i| \le \eps^{1/2}n$. Hence, we are able to choose and fix a vertex $x \in X \sm B_i \subseteq A_i$. We have $\{v_1,v_2,x\} \in \mH$.

Let $Z' = N_Z(x,v_2)$. By the definition of $X$, we have $|Z'| \ge |Z| /2$. Then, by the definition of $Z$ and~\eqref{equ::lowerBoundZ}, we have
$$
d_{V_j, Z'}(v_1) \ge \frac{d_{V_j,V_k}(v_1)}{n} \cdot |Z'| \ge \frac{d_{V_j,V_k}(v_1)}{n} \cdot \frac{|Z|}{2} \ge \frac{3}{4n^2} d^2_{V_j,V_k}(v_1)
$$
and hence, by the assumption that $d_{V_j,V_k}(v_1) \ge 2\eps^{1/4}n^2$, we have
\begin{equation} \label{equ::dVjsmv_2Z'v1}
d_{V_j\sm\{v_2\}, Z'}(v_1) \ge \frac{3}{4n^2} d^2_{V_j,V_k}(v_1) - |Z'|
\ge \frac{3}{4n^2} \cdot 4 \eps^{1/2} n^4 - n
= 3\eps^{1/2}n^2 - n
> 2\eps^{1/2} n^2.
\end{equation}
We can choose and fix $ y \in V_j \sm \{v_2\}$ and $z \in Z'$ such that $\{y,z\} \in N(v_1) \cap N(x)$, since otherwise, by~\eqref{equ::dVjsmv_2Z'v1}, we have
$$
d_{V_j,V_k}(x) \le |V_j||V_k| - d_{V_j\sm\{v_2\}, Z'}(v_1)
< |V_j||V_k| - 2\eps^{1/2} n^2
,
$$
a contradiction to that $x \in A_i$. Note that $y \neq v_2$. We have $\{x,v_2,z\} \in \mH$ by the definition of $Z'$. We also have $\{v_1,y,z\}, \{x,y,z\} \in \mH$.

Now, we have hyperedges $\{v_1,v_2,x\}$, $\{x,v_2,z\}$, $\{x,y,z\}$, and $\{v_1,y,z\}$, so $v_1v_2xzy$ is a copy of $\Cm[5]$ in $\mH$, a contradiction. Thus, we have~\eqref{equ::mainCleaningClaim}.
\end{proof}

\begin{claim} \label{cla::noij}
Let $\{i,j,k\} = [3]$.
For every vertex $v \in V(\mH)$ with $d_{V_j,V_k}(v) \ge 2\eps^{1/4}n^2$, we have $d_{V_i,V_j}(v) \le \eps^{1/4} n^2$.
\end{claim}
\begin{proof}
For every vertex $y \in A_j$, by Claim~\ref{cla::OneLargeOneSmall}, we have
$$
d_{V_i} (v,y) \le \frac{3n}{2\eps^{1/4}n^2} \cdot \eps^{1/2} n^2 + 2\eps^{1/2} n \le 2 \eps^{1/4} n.
$$
Therefore, by~\eqref{equ::sizeV} and Claim~\ref{cla::sizeB}, we have
\begin{equation*}
d_{V_i,V_j}(v) \le |A_j| \cdot 2\eps^{1/4} n + |B_j| \cdot |V_i| \le \left(\frac{1}{3} + \eps \right)n \cdot \left(2\eps^{1/4}n + \eps^{1/2} n\right) \le  \eps^{1/4} n^2. \qedhere
\end{equation*}
\end{proof}

\begin{claim} \label{cla::mHVi}
We have $|\mH[V_i]| \ge (1/24 - 27\eps^{1/4}) (n/3)^3$ for $i \in [3]$.
\end{claim}
\begin{proof}
Let $\{i,j,k\} = [3]$.
Let $v$ be an arbitrary vertex in $A_i$.
By the definition of $A_i$ and Claim~\ref{cla::noij}, we have $d_{V_i,V_j}(v), d_{V_i,V_k}(v) \le \eps^{1/4} n^2$.
Since $\K_4^- \in \fCm$, the link graph of $v$ is triangle-free. By Theorem~\ref{thm::Mantel} and~\eqref{equ::sizeV}, we have
$$
d_{V_j\cup V_k, V_j \cup V_k}(v) \le \frac{1}{4}\left(|V_j|+|V_k|\right)^2
\le \frac{1}{4}\left( 2 \cdot \left(\frac{1}{3} + \eps\right)n \right)^2
\le \left(\frac{1}{9} + \eps\right)n^2
.
$$
Then, by the bound $d(v) \ge (1/8 -\eps)n^2$ in~\eqref{equ::deg}, we have
$$
d_{V_i,V_i}(v) = d(v) - d_{V_i,V_j}(v)- d_{V_i,V_k}(v) - d_{V_j\cup V_k, V_j \cup V_k}(v)
\ge \left(\frac{1}{72} - 2\eps^{1/4} - 2\eps \right) n^2.
$$
Thus, by~\eqref{equ::sizeV} and Claim~\ref{cla::sizeB}, we have
\begin{align*}
|\mH[V_i]| &\ge \frac{1}{3} \sum_{v \in A_i} d_{V_i,V_i}(v)
\ge \frac{1}{3} \cdot (|V_i| - |B_i|) \cdot  \left(\frac{1}{72} - 2\eps^{1/4}-2\eps\right) n^2
\\
&\ge \frac{1}{3} \cdot \left( \frac{1}{3} -\eps - \eps^{1/2} \right)n \cdot \left(\frac{1}{72} - 3\eps^{1/4}\right) n^2
\ge \left(\frac{1}{24} - 27\eps^{1/4}\right) \left(\frac{n}{3}\right)^3.
\qedhere
\end{align*}

\end{proof}

\begin{claim}
For every vertex $v \in B$, there exists a unique pair $\{j,k\} \subset [3]$ such that $d_{V_j,V_k}(v) \ge |V_j||V_k| - 5\eps^{1/9} n^2$.
\end{claim}
\begin{proof}
For every $i \in [3]$, by Claim~\ref{cla::mHVi} and~\eqref{equ::sizeV}, we have
\begin{align*}
|\mH[V_i \cup \{v\}] |
&\ge |\mH[V_i]|
\ge \left(\frac{1}{24} - 27\eps^{1/4}\right) \left(\frac{n}{3}\right)^3 \\
&\ge \left(\frac{1}{24} - 36\eps^{1/4}\right) \left(\left(\frac{1}{3} + \eps \right)n + 1 \right)^3
\ge  \left(\frac{1}{24} - 36\eps^{1/4}\right) |V_i \cup \{v\}|^3.
\end{align*}
Note that, by~\eqref{equ::sizeV}, we have $|V_i \cup \{v\}| \ge |V_i| \ge (1/3 - \eps)n$, so $|V_i \cup \{v\}|$ is sufficiently large. We can apply Lemma~\ref{lem::degu} to $\mH[V_i \cup \{v\}]$ and get
\begin{equation} \label{equ::dViVi}
d_{V_i, V_i}(v) \le \left( \frac{1}{8} + 60\eps^{1/8}\right) |V_i \cup \{v\}|^2
\le \left( \frac{1}{8} + 60\eps^{1/8} \right) \left( \left( \frac{1}{3} + \eps \right)n + 1 \right)^2
\le \left(\frac{1}{72} + 8\eps^{1/8} \right)n^2.
\end{equation}
If $d_{V_1,V_2}(v), d_{V_1,V_3}(v), d_{V_2,V_3}(v) < 2 \eps^{1/4}n$, then we have
$$
d(v) \le \sum_{i=1}^3 d_{V_i,V_i}(v) + \sum_{1 \le i< j \le 3} d_{V_i,V_j} (v)
\le 3 \cdot \left(\frac{1}{72} + 8\eps^{1/8} \right)n^2 + 3 \cdot 2\eps^{1/4}n^2
< \frac{n^2}{23},
$$
a contradiction to~\eqref{equ::deg}. Hence, we can fix $\{j,k\} \subset [3]$ such that $d_{V_j,V_k}(v) \ge 2 \eps^{1/4} n^2$. Let $\{i\} = [3] \sm \{j,k\}$. By Claim~\ref{cla::noij}, we have
\begin{equation} \label{equ::dViVj}
d_{V_i,V_j}(v), d_{V_i,V_k}(v) \le \eps^{1/4}n^2.
\end{equation}
It remains to prove that $d_{V_j,V_k}(v) \ge |V_j||V_k| - 5 \eps^{1/9} n^2$.

Let $G = N_{V_j \cup V_k, V_j \cup V_k} (v)$. We view $G$ as a graph on vertex set $(V_j \cup V_k) \sm \{v\}$.
Note that $G$ is triangle-free, since $\mH$ is $\K_4^-$-free.
By~\eqref{equ::sizeV}, we have
\begin{equation} \label{equ::numVerVG}
|V(G)| \le |V_j| + |V_k| \le \left(\frac{2}{3}+2\eps\right)n
.
\end{equation}
By~\eqref{equ::deg},~\eqref{equ::dViVi}, and~\eqref{equ::dViVj}, we have
\begin{multline} \label{equ::dVjVjVjVkVkVk}
|G| = d_{V_j,V_j}(v) + d_{V_j,V_k}(v) + d_{V_k,V_k}(v)
= d(v) - d_{V_i,V_i}(x) - d_{V_i,V_j}(x) - d_{V_i,V_k}(x)
\\ \ge
\left( \frac{1}{8} - \eps \right) n^2 - \left(\frac{1}{72} + 8\eps^{1/8} \right)n^2 - 2\cdot \eps^{1/4} n^2
\ge \left(\frac{1}{9} - 9\eps^{1/8}\right) n^2.
\end{multline}

For a bipartition $(S_1,S_2)$ of $V_j \cup V_k$, let $G[S_1,S_2]$ be the set of edges in $G$ between $S_1$ and $S_2$. Fix $(S_1,S_2)$ to be the bipartition of $V_j\cup V_k$ which minimizes $|G\sm G[S_1,S_2]|$. By~\eqref{equ::numVerVG},~\eqref{equ::dVjVjVjVkVkVk}, and Theorem~\ref{thm::manSta}, we have
\begin{multline} \label{equ::GsmGS1S2}
    |G \sm G[S_1,S_2]| \le |G| - \frac{4|G|^2}{|V(G)|^2}
    \le |G| - \frac{4|G|^2}{\left( \frac{4}{9} + \frac{8}{3}\eps + 4\eps^2 \right)n^2} \\
    \le \left(\frac{1}{9} - 9\eps^{1/8}\right) n^2 - \frac{4 \left(\left(\frac{1}{9} - 9\eps^{1/8}\right) n^2\right)^2}{\left( \frac{4}{9} + \frac{8}{3}\eps + 4\eps^2 \right)n^2}
    \le \left(\frac{1}{9} - 9\eps^{1/8}\right) n^2 - \left(\frac{1}{9} -20\eps^{1/8}\right)n^2
    = 11 \eps^{1/8} n^2.
\end{multline}
By~\eqref{equ::dVjVjVjVkVkVk} and~\eqref{equ::GsmGS1S2}, We have $|G[S_1,S_2]| \ge (1/9 - 20 \eps^{1/8})n^2$. Then, by~\eqref{equ::sizeV},
\begin{align} \label{equ::S1S2minusGS1S2}
|[S_1, S_2] \sm G[S_1,S_2]|
&= |S_1||S_2| - |G[S_1,S_2]|
\le \left(\frac{|V_j| + |V_k|}{2} \right)^2 - |G[S_1,S_2]|
\notag \\
&\le
\left(\left(\frac{1}{3} + \eps\right) n \right)^2 -
\left(\frac{1}{9} - 20 \eps^{1/8}\right)n^2
\le 21\eps^{1/8}n^2.
\end{align}
Note that if $\{u,w_1\},\{u,w_2\} \in G$, then $\{u,w_1,w_2\} \notin \mH$, since otherwise, $vw_1uw_2$ forms a copy of $\K_4^-$ in $\mH$. Hence, every hyperedge in $\mH[S_1,S_1,S_2] \cup \mH[S_2,S_2,S_1]$ contains at least one pair of vertices in $[S_1, S_2] \sm G[S_1,S_2]$. Therefore, by~\eqref{equ::S1S2minusGS1S2},
\begin{equation} \label{equ::sizeS1S1S2S2S2S1}
|\mH[S_1,S_1,S_2]| + |\mH[S_2,S_2,S_1]|
\le
21\eps^{1/8}n^3.
\end{equation}

Now, assume for contradiction that $d_{V_j,V_k}(v) < |V_j||V_k| - 5 \eps^{1/9} n^2$. By~\eqref{equ::dVjVjVjVkVkVk} and~\eqref{equ::sizeV},
$$
d_{V_j,V_j}(v) + d_{V_k,V_k}(v) \ge \left(\frac{1}{9} - \eps^{1/8} \right) n^2 - \left( |V_j||V_k| - 5 \eps^{1/9} n^2 \right)
\ge 4 \eps^{1/9} n^2
,
$$
so we can assume without loss of generality that $d_{V_j,V_j}(v) \ge 2\eps^{1/9}n^2$. Let $V_{j1} = V_j \cap S_1$ and $V_{j2} = V_j \cap S_2$. Then, by~\eqref{equ::GsmGS1S2}, we have
\begin{align*}
2\eps^{1/9} n^2 \le
d_{V_j,V_j}(v)
&= d_{V_{j1},V_{j1}}(v) + d_{V_{j2},V_{j2}}(v) + d_{V_{j1},V_{j2}}(v)
\\
&\le |G \sm G[S_1,S_2]| + |V_{j1}||V_{j2}|
\le 11\eps^{1/8}n^2 + |V_{j_1}||V_{j_2}|
,
\end{align*}
so
\begin{equation} \label{equ::sizeVj1Vj2}
|V_{j1}|,|V_{j2}| \ge \eps^{1/9}n.
\end{equation}
By Lemma~\ref{lem::weakTuranNum}, we have
\begin{equation} \label{equ::Vj1Vj2}
|\mH[V_{ja}]|
\le  \left(\frac{1}{24} + \frac{1}{2} \sqrt{\frac{21}{\L-26}} \right) |V_{ja}|^3
\le \left(\frac{1}{24} + \eps \right) |V_{ja}|^3
,
\quad \textrm{for $a = 1,2$}
.
\end{equation}
Then, by~\eqref{equ::Vj1Vj2},~\eqref{equ::sizeS1S1S2S2S2S1},~\eqref{equ::sizeV}, and~\eqref{equ::sizeVj1Vj2},
we have
\begin{align*}
|\mH[V_j]|
&\le |\mH[V_{j1}]| + |\mH[V_{j2}]| + |\mH[S_1,S_1,S_2]| + |\mH[S_2,S_2,S_1]|\\
&\le \left(\frac{1}{24} + \eps\right) \left(|V_{j1}|^3+ |V_{j2}|^3\right) + 21\eps^{1/8}n^3
\le \left(\frac{1}{24} + \eps\right) \left(|V_{j1}|^3+ \left(|V_j|- |V_{j1}|\right)^3\right) + 21\eps^{1/8}n^3\\
&\le \left(\frac{1}{24} + \eps\right) \left(\left(\eps^{1/9}n\right)^3+ \left(\left(\frac{1}{3} + \eps\right)n- \eps^{1/9}n\right)^3\right) + 21\eps^{1/8}n^3
\le \left(\frac{1}{24} - \frac{1}{10} \eps^{1/9}\right) \left(\frac{n}{3}\right)^3,
\end{align*}
a contradiction to Claim~\ref{cla::mHVi},
where the second-to-last inequality is due to the fact that the function $f_c(x) = x^3 + (c-x)^3$, where $c$ is a positive constant, is decreasing on $[0, c/2]$ and increasing on $[c/2,c]$.
Thus, we have $d_{V_j,V_k}(v) \ge |V_j||V_k| - 5 \eps^{1/9} n^2$.
\end{proof}

For $\{i,j,k\} = [3]$,
let $B'_i \ce \{v \in B: d_{V_j,V_k}(v) \ge |V_j||V_k| - 5\eps^{1/9} n^2\}$ and $V_i' \ce A_i \cup B'_i$.
Let $\pi' \ce (V'_1,V'_2,V'_3)$ be a new partition of $V(\mH)$. By~\eqref{equ::sizeV} and Claim~\ref{cla::sizeB}, we have
\begin{equation} \label{equ::sizeV'}
    \left(\frac{1}{3} - 2\eps^{1/2}\right)n < |V'_1|,|V'_2|,|V'_3| < \left( \frac{1}{3} + 3\eps^{1/2}\right)n.
\end{equation}

\begin{claim} \label{cla::allGood}
For $\{i,j,k\} = [3]$ and vertices $x \in V'_i$, $y \in V'_j$, we have $d_{V'_i} (x,y) \le \eps^{1/10}n$.
\end{claim}
\begin{proof}
By Claim~\ref{cla::OneLargeOneSmall} and~\eqref{equ::sizeV}, we have
$$
d_{A_i}(x,y) \le d_{V_i}(x,y) \le \frac{3n}{|V_j||V_k| - 5\eps^{1/9} n^2} \cdot 5\eps^{1/9} n^2 + 2\eps^{1/2} n
\le \frac{1}{2}\eps^{1/10}n
.
$$
By Claim~\ref{cla::sizeB}, we have $d_{B'_i}(x,y)\le |B'_i| \le |B| \le 3\eps^{1/2}n$. Therefore, $d_{V'_i} (x,y) \le \eps^{1/10}n$.
\end{proof}

\begin{claim}
We have $\mH^{\pi'}_{bad} = \es$.
\begin{proof}
For $\{i,j,k\} = \{3\}$, let
$$
Q_{ij} \ce \{\,(\{x_1,x_2,y\},z): \{x_1,x_2,y\} \in \mH[V'_i,V'_i,V'_j],\,x_1,x_2 \in V'_i,\, y\in V'_j,\, z \in V'_k \,\}
$$
and $Q \ce \bigcup_{1\le i \neq j \le 3} Q_{ij}$. Note that for every $(\{x_1,x_2,y\},z) \in Q_{ij}$, we have $\{x_1,y,z\} \in \bar{\mH}_{\pi'}$ or $\{x_2,y,z\} \in \bar{\mH}_{\pi'}$, since otherwise $x_1zyx_2$ forms a copy of $\K_4^-$.
For every $\{x,y,z\} \in
\bar{\mH}_{\pi'}$,
by Claim~\ref{cla::allGood}, there can be at most $\eps^{1/10}n$ such $x' \in V_i'$ that $\{x,x',y\} \in \mH[V'_i,V'_i,V'_j]$, so we have
\begin{equation} \label{equ::QbarmH}
|Q| = \sum_{1 \le i \neq j \le 3} |Q_{ij}| \le 6 \cdot \eps^{1/10} n \cdot |\bar{\mH}_{\pi'}|.
\end{equation}
On the other hand,
fix $i\neq j \in [3]$ which maximize $|\mH[V'_i,V'_i,V'_j]|$. Let $k = [3] \sm \{i,j\}$.
By~\eqref{equ::sizeV'},
\begin{equation} \label{equ::QmHV'iV'iV'j}
|Q| \ge |Q_{ij}| = |\mH[V'_i,V'_i,V'_j]| \cdot |V'_k|
\ge |\mH[V'_i,V'_i,V'_j]| \left(\frac{1}{3} - 2\eps^{1/2}\right)n
.
\end{equation}
Combining~\eqref{equ::QbarmH} and~\eqref{equ::QmHV'iV'iV'j}, we have $|\mH[V'_i,V'_i,V'_j]| \le 0.01|\bar{\mH}_{\pi'}|$, so
$$
|\mH^{\pi'}_{bad}| \le 6 |\mH[V'_i,V'_i,V'_j]| \le \frac{1}{2}|\bar{\mH}_{\pi'}|.
$$
If $|\mH^{\pi'}_{bad}| > 0$, then
$(\mH \sm \mH^{\pi'}_{bad}) \cup \bar{\mH}_{\pi'}$ is $\fCm$-free and has strictly more hyperedges than $\mH$, which is a contradiction to the maximality of $\mH$. Therefore, $\mH^{\pi'}_{bad} = \es$.
\end{proof}
\end{claim}

Note that by~\eqref{equ::sizeV'}, we have $|V_i'| < n$ for $i \in [3]$.
By induction, we now have
\begin{align*}
\ex(n,\fCm) &= |\mH| = |\mH_{\pi'}| + \sum_{i=1}^3 |\mH[V_i']| \le |V_1'||V_2'||V_3'| + \sum_{i=1}^3 \left(\frac{1}{24}|V_i'|^3 + M^2\cdot |V_i'| \right) \\
&= |V_1'||V_2'||V_3'| + \frac{1}{24}\sum_{i=1}^3 |V_i'|^3 + M^2 n \le \frac{1}{24}n^3 + M^2 n
,
\end{align*}
where for the last inequality, we use~\eqref{equ::sizeV'} and the fact that the function $g(x_1,x_2,x_3) = x_1x_2x_3 + \sum_{i = 1}^3 x_i^3 /24$ defined on the domain $\{(x_1,x_2,x_3)\in [0.32, 0.34]^3: x_1+x_2+x_3 = 1\}$ has maximum value $1/24$.
This proves~\eqref{equ::ex}.

Combining the lower bound and the upper bound, we have $\pi(\fCm) = 1/4$. \qedhere
\end{proof}

Finally, we are able to present the proof of Theorem~\ref{thm::main}.

\begin{proof}[Proof of Theorem~\ref{thm::main}]
For every $\l \ge 4$ not divisible by $3$,
by Construction~\ref{con::iteEdge}, we have $\pi(\Cm) \ge 1/4$. For the upper bound, we have the following claims. Recall that for a $3$-graph $\mH$, we write $\mH[t]$ for its $t$-blow-up.
\begin{claim} \label{cla::CmBlow}
For integers $\l_1, \l_2 \ge 4$, where $\l_1 \ge 2\l_2-3$ and $3\nmid \l_2$, there exists a positive integer $t$ such that $\Cm[\l_1] \subseteq \Cm[\l_2][t]$.
\end{claim}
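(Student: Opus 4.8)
The plan is to translate the containment $\Cm[\l_1]\subseteq\Cm[\l_2][t]$ into the existence of a cyclic walk of the right length in $\Cm[\l_2]$, and then to build such walks by hand. Identify the vertex set of $\Cm[\l]$ with $\Z/\l$, so that the hyperedges of $\Cm[\l_2]$ are exactly the $3$-arcs $\{j,j+1,j+2\}$ of $\Z/\l_2$ with $0\le j\le\l_2-2$ (that is, all $3$-arcs except $\{\l_2-1,0,1\}$). I claim it suffices to produce a sequence $p_0,p_1,\dots,p_{\l_1-1}\in\Z/\l_2$ such that, reading indices modulo $\l_1$, the set $\{p_i,p_{i+1},p_{i+2}\}$ is a hyperedge of $\Cm[\l_2]$ for every $0\le i\le\l_1-2$. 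Indeed, given such a sequence, the assignment $i\mapsto(p_i,i)$ embeds $\Cm[\l_1]$ (with vertex set $\Z/\l_1$) into $\Cm[\l_2][\l_1]$: it is injective because of the second coordinate, and each hyperedge $\{i,i+1,i+2\}$ of $\Cm[\l_1]$ is sent to $\{(p_i,i),(p_{i+1},i+1),(p_{i+2},i+2)\}$, whose first coordinates form the hyperedge $\{p_i,p_{i+1},p_{i+2}\}$ of $\Cm[\l_2]$ (in particular they are distinct, so the image really is a $3$-set). Thus $t=\l_1$ will do. For any $m\ge 3$ call a sequence $p_0,\dots,p_{m-1}$ with the analogous property (indices modulo $m$, triples for $0\le i\le m-2$) a \emph{good cycle of length $m$}.

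The next step is a lengthening move: if a good cycle $q_0,\dots,q_{m-1}$ of length $m\ge 3$ exists, then so does one of length $m+3$, namely $q_0,q_1,q_2,q_0,q_1,q_2,q_3,q_4,\dots,q_{m-1}$, obtained by inserting a second copy of the first three entries right after the third. A direct check shows that every consecutive triple of the new sequence is either $\{q_0,q_1,q_2\}$ or a consecutive triple of the old one, and that the unique omitted triple is unchanged; hence the set of lengths admitting a good cycle is closed under adding $3$.

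It remains to exhibit, for each residue class modulo $3$, a short good cycle in that class, and this is where the hypotheses enter. For the class of $0$ take the length-$3$ cycle $(0,1,2)$, for which both required triples equal the hyperedge $\{0,1,2\}$. For the class of $\l_2\bmod 3$ take the length-$\l_2$ cycle $(0,1,2,\dots,\l_2-1)$: the triples with $0\le i\le\l_2-3$ are consecutive $3$-arcs, the triple $i=\l_2-2$ is the ``bridge'' hyperedge $\{\l_2-2,\l_2-1,0\}$, and the omitted triple $i=\l_2-1$ is exactly the forbidden arc $\{\l_2-1,0,1\}$. For the class of $-\l_2\bmod 3$ take the length-$(2\l_2-3)$ cycle given by $p_{2j}=j$ for $0\le j\le\l_2-2$ and $p_{2j+1}=j+2$ for $0\le j\le\l_2-3$, that is $(0,2,1,3,2,4,3,\dots,\l_2-3,\l_2-1,\l_2-2)$: every consecutive triple equals a $3$-arc $\{j,j+1,j+2\}$ with $j\le\l_2-3$, except the closing triple $\{p_{2\l_2-5},p_{2\l_2-4},p_0\}=\{\l_2-1,\l_2-2,0\}$, which is again the bridge hyperedge. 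Since $3\nmid\l_2$, the numbers $0$, $\l_2$, $-\l_2$ are pairwise incongruent modulo $3$, so these three cycles realize all of $\Z/3$; and each has length at most $2\l_2-3$ since $\l_2\ge4$. Combining with the lengthening move, a good cycle of length $m$ exists for every $m\ge 2\l_2-3$, in particular for $m=\l_1$, which completes the proof.

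The part I expect to take the most thought is the third base cycle, together with the recognition that $3\nmid\l_2$ and the bound $\l_1\ge2\l_2-3$ are forced by the same phenomenon. A good cycle using only the ``interior'' hyperedges $\{j,j+1,j+2\}$ with $j\le\l_2-3$ has vertices that are necessarily periodic with period $3$ modulo $3$ along the cycle, which pins down the closing triple and forces $3\mid m$. Breaking out of this requires routing through the bridge hyperedge $\{\l_2-2,\l_2-1,0\}$, whose vertices fail to carry three distinct residues mod $3$ precisely because $3\nmid\l_2$; the ``doubled'' cycle above is the cheapest way to traverse the bridge transversally, and bookkeeping its length yields exactly $2\l_2-3$. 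Once the three base cycles are written out, everything else is a routine verification of triples.
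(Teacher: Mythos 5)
Your proposal is correct and takes essentially the same route as the paper: the paper also reduces the containment to exhibiting a suitable closed tight walk of length $\l_1$ in $\Cm[\l_2]$ realized in the blow-up, pads the length in steps of $3$ by repeating a single hyperedge, and uses the same three base walks (the triangle of length $3$, the full traversal of length $\l_2$, and the doubled traversal of length $2\l_2-3$) according to $\l_1 \bmod 3$. Your ``good cycle'' formulation with the explicit choice $t=\l_1$ is just a cleaner packaging of the paper's explicit blow-up sequences.
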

\begin{proof}
Let $v_1v_2\ldots v_{\l_2}$ be a copy of $\Cm[\l_2]$.
Let $t$ be sufficiently large, and let $\{v_i^1,\ldots, v_i^t\}$ be the copies of vertex $v_i$ in $\Cm[\l_2][t]$.
For $1\le k \le t-2$,
we write $(v_{i_1}v_{i_2}\ldots v_{i_{j-1}} v_{i_j})^k$ for the sequence
$$
v_{i_1}^3v_{i_2}^3\ldots v_{i_{j-1}}^3 v_{i_j}^3v_{i_1}^4v_{i_2}^4\ldots
v_{i_{j-1}}^4 v_{i_j}^4\ldots v_{i_1}^{k+2}v_{i_2}^{k+2}\ldots v_{i_{j-1}}^{k+2}v_{i_j}^{k+2}.
$$
\begin{itemize}
    \item If $\l_1 \equiv 0 \pmod{3}$, then
    $(v_1v_2v_3)^{\frac{\l_1}{3}}$
    forms a copy of $\C_{\l_1} \supseteq \Cm[\l_1]$ in $\Cm[\l_2][t]$.

    \item If $\l_1 \equiv \l_2 \pmod{3}$, then
    $
    (v_1v_2v_3)^{\frac{\l_1-\l_2}{3}} v^{1}_1 v^{1}_2 v^{1}_3 v^{1}_4 \ldots v_{\l_2-1}^{1}
    v_{\l_2}^{1}
    $
    forms a copy of $\Cm[\l_1]$ in $\Cm[\l_2][t]$.

    \item If $\l_1 \equiv 2\l_2 \pmod{3}$, then
    $$
    (v_1v_3v_2)^{\frac{\l_1-2\l_2+3}{3}}
    v_1^1v_3^1v_2^1v_4^1v_3^2v_5^1 \ldots v_{\l_2-2}^1v_{\l_2-3}^2v_{\l_2-1}^1v_{\l_2-2}^2v_{\l_2}^1v_{\l_2-1}^2
    $$
    forms a copy of $\Cm[\l_1]$ in $\Cm[\l_2][t]$.
    \qedhere
\end{itemize}
\end{proof}

\begin{claim} \label{cla::fCmBlow}
For integers $\l, \L \ge 4$, where $\l \ge 2\L -3$, there exists a positive integer $t$ such that $\Cm \subseteq \F[t]$ for every $\F \in \fCm$.
\end{claim}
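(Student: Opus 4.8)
The plan is to obtain the copies of $\Cm$ from Claim~\ref{cla::CmBlow} and then push them forward along the surjective homomorphisms witnessing that each $\F\in\fCm$ is a pseudo-cycle minus one hyperedge.

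First I would dispose of the ``genuine'' cycles: for every integer $\l'$ with $4\le\l'\le\L$ and $3\nmid\l'$ we have $\l\ge 2\L-3\ge 2\l'-3$, so Claim~\ref{cla::CmBlow} provides a positive integer $t_{\l'}$ with $\Cm\subseteq\Cm[\l'][t_{\l'}]$; since blow-ups are monotone (that is, $\mathcal{G}[s]\subseteq\mathcal{G}[s']$ for $s\le s'$), putting $t_0\ce\max\{t_{\l'}:4\le\l'\le\L,\ 3\nmid\l'\}$ gives $\Cm\subseteq\Cm[\l'][t_0]$ for all admissible $\l'$.

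The key step is to show that $\Cm[\l']\subseteq\F[\l']$ whenever $\F$ is a pseudo-cycle minus one hyperedge of size $\l'$. Let $g\colon V(\Cm[\l'])\to V(\F)$ be a surjective homomorphism. Note that $g$ is injective on every hyperedge of $\Cm[\l']$: if $\{u_1,u_2,u_3\}$ is a hyperedge then $\{g(u_1),g(u_2),g(u_3)\}\in\F$, which is a set of size $3$ by definition. Now enumerate the (distinct) vertices of $\Cm[\l']$ as $u_1,\dots,u_{\l'}$ and map $u_a\mapsto(g(u_a),a)\in V(\F)\times[\l']=V(\F[\l'])$. This map is injective because of the second coordinate, and it sends a hyperedge $\{u_a,u_b,u_c\}$ to $\{(g(u_a),a),(g(u_b),b),(g(u_c),c)\}$, which is a hyperedge of $\F[\l']$ since $\{g(u_a),g(u_b),g(u_c)\}\in\F$ and $a,b,c\in[\l']$. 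Hence $\Cm[\l']\subseteq\F[\l']$.

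Combining the two steps, blowing up the inclusion $\Cm[\l']\subseteq\F[\l']$ by a factor of $t_0$, and using $\big(\F[\l']\big)[t_0]=\F[\l't_0]$ together with $\l'\le\L$, we get
$$
\Cm\ \subseteq\ \Cm[\l'][t_0]\ \subseteq\ \big(\F[\l']\big)[t_0]\ =\ \F[\l't_0]\ \subseteq\ \F[\L t_0],
$$
so $t\ce\L t_0$ works uniformly, as $t_0$ depends only on $\l$ and $\L$. The one genuine difficulty is that $g$ need not be injective --- a pseudo-cycle minus one hyperedge may have strictly fewer than $\l'$ vertices --- and the ``layering'' over the $\l'$ new coordinates is exactly what repairs this; the fact that it simultaneously preserves all hyperedges hinges on $g$ being injective on each hyperedge. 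The remaining ingredients --- monotonicity of blow-ups and the identity $(\mathcal{G}[a])[b]=\mathcal{G}[ab]$ --- are routine.
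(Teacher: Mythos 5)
Your proposal is correct and follows essentially the same route as the paper: combine Claim~\ref{cla::CmBlow} with the observation that $\Cm[\l'] \subseteq \F[\l']$ for any pseudo-cycle minus one hyperedge $\F$ of size $\l'$, then compose blow-ups and use $\l'\le\L$ to get a uniform $t$. The only difference is that you spell out the embedding $u_a\mapsto(g(u_a),a)$ behind the paper's ``its $\l'$-blow-up contains a copy of $\Cm[\l']$'' and the identity $(\mathcal{G}[a])[b]=\mathcal{G}[ab]$, which the paper treats as immediate.
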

\begin{proof}
    Let $t$ be sufficiently large.
    Note that for every pseudo-cycle minus one hyperedge of size $\l'$, its $\l'$-blow-up contains a copy of $\Cm[\l']$. Hence, for every pseudo-cycle minus one hyperedge $\F \in \fCm$ with size $\l' \le \L$, by Claim~\ref{cla::CmBlow}, we have $\Cm \subseteq \Cm[\l'] [t/\L]
    \subseteq \F [t].$
\end{proof}

Let $\l$ and $\L$ be sufficiently large integers, where $\l \ge 2\L - 3$. Let $t$ be sufficiently large.
By Theorem~\ref{thm::turanBlowup} and Proposition~\ref{pro::main}, we have $\pi(\fCm[\le \L][t]) = \pi(\fCm) = 1/4$.
By Claim~\ref{cla::fCmBlow}, we have $\pi(\Cm) \le \pi(\fCm[\le \L][t]) = 1/4$.

Thus, we have $\pi(\Cm) = 1/4$, for every sufficiently large $\l$ not divisible by $3$.
\end{proof}

\section{Maximum number of almost similar triangles in the plane} \label{sec::tri}
In this section, we provide our new proof for Theorem~\ref{thm::triangle}.

For a triangle $\tri$, a real number $\eps > 0$, and a finite set of points $P \in \R^2$, let $\mH(P,\tri,\eps)$
be the $3$-graph on vertex set $P$, where $\{a,b,c\}$ is a hyperedge if $a,b,c$ form a triangle in $\R^2$ that is $\eps$-similar to $\tri$, and let $\mH(P,\tri)$
be the $3$-graph on vertex set $P$, where $\{a,b,c\}$ is a hyperedge if $a,b,c$ form a triangle in $\R^2$ that is similar to $\tri$.

\begin{definition} \label{def::fFtri}
Fix $C_{tri}$ to be an absolute constant.
    The \emph{forbidden family} $\fF_{tri}$ is the collection of $3$-graphs $\F$ with at most $C_{tri}$ vertices, where for almost all triangles $\tri$, there exists $\eps= \eps(\tri) >0$ such that $\mH(P,\tri, \eps)$ is $\F$-free, for every point set $P \subseteq \R^2$.
\end{definition}
The $C_{tri}$ in Definition~\ref{def::fFtri} can be chosen arbitrarily and is to make $\fF_{tri}$ a finite set to avoid some technical problems about infinity.
We let $C_{tri} \ce \L + 3$, where $\L$ is the constant in Theorem~\ref{thm::main}.
By definition, we have $h(n,\tri,\eps) = \max_{P \subseteq \R^2, |P| = n} |\mH(P,\tri,\eps)|$. Hence, $\ex(n,\fF_{tri}) \ge h(n,\tri,\eps)$ for almost all triangles $\tri$ and small enough $\eps = \eps(\tri)$.
B\'ar\'any and F\"uredi~\cite{barany2018almost} proved that several $3$-graphs, including $\K_4^-$ and $\Cm[5]$, are in $\fF_{tri}$ and then gave an upper bound for $\ex(n,\fF_{tri})$ using flag algebra. Balogh, Clemen, and Lidick\'y~\cite{balogh2022maximum} provided more members of $\fF_{tri}$ and then used a combination of flag algebra and stability method to obtain Theorem~\ref{thm::triangle}. We will prove that every tight cycle minus one hyperedge of size $4\le \l \le C_{tri}$ not divisible by $3$ is in $\fF_{tri}$, and then Theorem~\ref{thm::triangle} follows immediately from Theorem~\ref{thm::main}.

\begin{proposition} \label{pro::CminfFtri}
    For every integer $\l$, where $4 \le \l \le C_{tri}$ and $3\nmid \l$, we have $\Cm \in \fF_{tri}$.
\end{proposition}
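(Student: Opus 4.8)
The plan is to extend the angle-counting idea of B\'ar\'any and F\"uredi, reducing the non-embeddability of $\Cm$ into $\mH(P,\tri,\eps)$ to the statement that a finite list of real-analytic functions of $\tri$ do not vanish identically; the hypothesis $3\nmid\l$ will enter precisely in this last step. Note first that $\Cm$ has $\l\le C_{tri}$ vertices, so it is eligible to belong to $\fF_{tri}$; it remains to produce, for almost every $\tri$, some $\eps=\eps(\tri)>0$ with $\mH(P,\tri,\eps)$ free of $\Cm$ for every finite $P\subseteq\R^2$.

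Suppose $\mH(P,\tri,\eps)$ contains a copy of $\Cm$ on points $p_0,p_1,\dots,p_{\l-1}$, so that for $0\le i\le\l-2$ the triple $\{p_i,p_{i+1\bmod\l},p_{i+2\bmod\l}\}$ spans a triangle $\eps$-similar to $\tri$. Regarding the $p_i$ as complex numbers, set $w_i\ce p_{i+1\bmod\l}-p_i$ for $0\le i\le\l-1$. Traversing the closed polygon $p_0p_1\cdots p_{\l-1}p_0$ gives $\sum_{i=0}^{\l-1}w_i=0$, and each $w_i\neq0$ since $p_i,p_{i+1\bmod\l}$ lie in a common hyperedge. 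For a triangle $\tri$ with distinct, well-separated angles $a_1,a_2,a_3$ and $\eps$ small, each hyperedge $\{p_i,p_{i+1},p_{i+2}\}$ determines a permutation $\sigma_i$ of $\{1,2,3\}$ recording which vertex carries which angle, and an orientation sign $\epsilon_i\in\{+1,-1\}$; by the law of sines together with an elementary angle chase one gets $w_{i+1}/w_i=\rho_i$, where, using $e^{\pm\mathbf{i}(\pi-a)}=-e^{\mp\mathbf{i}a}$, the ratio $\rho_i$ lies within $O_\tri(\eps)$ of the model ratio $\rho_i^{*}\ce-\dfrac{\sin a_{\sigma_i(1)}}{\sin a_{\sigma_i(3)}}\,e^{-\epsilon_i\mathbf{i}\,a_{\sigma_i(2)}}$ (here $\mathbf{i}=\sqrt{-1}$). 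Writing $R_i\ce\rho_0\rho_1\cdots\rho_{i-1}$ (so $R_0=1$), we get $w_i=R_iw_0$, and the polygon identity becomes $w_0\sum_{i=0}^{\l-1}R_i=0$, hence $\sum_{i=0}^{\l-1}R_i=0$.

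Now I isolate the finitely many ``model sums''. For each choice of permutations $\bm{\sigma}=(\sigma_0,\dots,\sigma_{\l-2})$ and signs $\bm{\epsilon}=(\epsilon_0,\dots,\epsilon_{\l-2})$, define
$$
F_{\bm{\sigma},\bm{\epsilon}}(\tri)\ce\sum_{i=0}^{\l-1}\ \prod_{j=0}^{i-1}\left(-\frac{\sin a_{\sigma_j(1)}}{\sin a_{\sigma_j(3)}}\,e^{-\epsilon_j\mathbf{i}\,a_{\sigma_j(2)}}\right),
$$
a real-analytic function on the (connected) open region $\{(a_1,a_2):a_1,a_2>0,\ a_1+a_2<\pi\}$. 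The crux is: \emph{if $3\nmid\l$, then every $F_{\bm{\sigma},\bm{\epsilon}}$ is not identically zero.} Granting this, each zero set $\{F_{\bm{\sigma},\bm{\epsilon}}=0\}$ is Lebesgue-null, and, there being only finitely many of them, their union $Z$ (together with the null set where two angles coincide) is null. Fix $\tri\notin Z$ and put $\eta\ce\min_{\bm{\sigma},\bm{\epsilon}}|F_{\bm{\sigma},\bm{\epsilon}}(\tri)|>0$. Since $\sin$, reciprocation and $\exp$ are Lipschitz on the compact set determined by $\tri$ (bounded away from the degeneracies of $\tri$), in any realization one has $|\rho_j-\rho_j^{*}|\le C(\tri)\eps$ for a suitable model sequence $(\bm{\sigma},\bm{\epsilon})$, whence $\bigl|\sum_iR_i-F_{\bm{\sigma},\bm{\epsilon}}(\tri)\bigr|\le C'(\tri,\l)\eps$. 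As $\sum_iR_i=0$, this forces $|F_{\bm{\sigma},\bm{\epsilon}}(\tri)|\le C'\eps$, contradicting $|F_{\bm{\sigma},\bm{\epsilon}}(\tri)|\ge\eta$ as soon as $\eps<\eta/C'$. Hence for every $\tri\notin Z$ there is $\eps(\tri)>0$ with $\mH(P,\tri,\eps)$ free of $\Cm$, as desired.

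The main obstacle is the crux. The dangerous model sequences are those imitating a single, consistently oriented triangle traversed repeatedly: here $(\sigma_j,\epsilon_j)$ is $3$-periodic and the three model ratios $\alpha,\beta,\gamma$ satisfy $\alpha\beta\gamma=1$ and $1+\alpha+\alpha\beta=0$ (exactly the relations obeyed by the ratios $w_1/w_0,\ w_2/w_1,\ w_0/w_2$ of a genuine triangle), so $R_i^{*}$ depends only on $i\bmod 3$, equalling $1,\alpha,\alpha\beta$ for $i\equiv0,1,2$; thus $F_{\bm{\sigma},\bm{\epsilon}}=m_0+m_1\alpha+m_2\alpha\beta$, where $(m_0,m_1,m_2)$ counts the residues of $\{0,\dots,\l-1\}$ modulo $3$. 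When $3\mid\l$ these counts coincide and the combination telescopes to $0$ — this is exactly why the conclusion fails for $3\mid\l$ — but when $\l\equiv1\pmod3$ it equals $1$, and when $\l\equiv2\pmod3$ it equals $-\alpha\beta$, neither of which is identically zero. For a general model sequence I would exclude $F_{\bm{\sigma},\bm{\epsilon}}\equiv0$ by a leading-order analysis as $a_1\to0^{+}$: then $|R_i^{*}|\asymp a_1^{d_i}$ for an explicit integer exponent $d_i$ read off from $\bm{\sigma}$, and the terms of minimal exponent cannot all cancel in the limit (for generic $a_2$, distinct combinations of $a_2$ and $\pi$ give distinct arguments), except precisely in the periodic-triangle regime handled above. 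Carrying out this case analysis over the finitely many model sequences — and verifying that planarity imposes no further orientation or permutation consistency that would alter the picture — is where the real work lies; everything else is bookkeeping, after which $\Cm\in\fF_{tri}$ follows.
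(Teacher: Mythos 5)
Your reduction to the ``model sums'' $F_{\bm{\sigma},\bm{\epsilon}}$ is a reasonable framework, but the proof has a genuine gap at exactly the point you flag yourself: the crux, that $F_{\bm{\sigma},\bm{\epsilon}}\not\equiv 0$ for \emph{every} admissible model sequence when $3\nmid\l$, is never established. You verify it only for the $3$-periodic sequences satisfying the ``genuine triangle'' identities $1+\alpha+\alpha\beta=0$, $\alpha\beta\gamma=1$, and for everything else you offer a leading-order sketch as $a_1\to 0^{+}$ whose key assertion --- that the minimal-exponent terms ``cannot all cancel \ldots except precisely in the periodic-triangle regime'' --- is unsubstantiated. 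It is also not obviously true as stated: model sequences with, say, $\rho_{i}^{*}\rho_{i+1}^{*}\equiv 1$ (a permutation-reversed, orientation-flipped consecutive pair, geometrically a ``fold-back'') make partial products $R_i^{*}$ repeat and create cancellation patterns outside your periodic case, and even within the $3$-periodic family one must rule out identities of the form $m_0+m_1\alpha+m_2\alpha\beta\equiv 0$ without assuming the genuine-triangle relations. Since the whole point of the proposition is precisely this non-vanishing (it is where $3\nmid\l$ enters), the argument as written does not prove the statement; the surrounding $\eps$-perturbation and measure-zero bookkeeping is fine but is not the mathematical content.

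For comparison, the paper avoids this case analysis entirely. By the observation of B\'ar\'any and F\"uredi (their Lemma~9.2) that the exceptional set of triangles is contained in a proper algebraic subset, it suffices to exhibit a \emph{single} triangle $\tri$ for which the exact-similarity hypergraph $\mH(P,\tri)$ is $\Cm$-free for all $P$; taking $\tri$ equilateral, any tight walk of unit equilateral triangles lives in the triangular lattice, which admits a $3$-coloring ($x+2y \bmod 3$) making every unit equilateral triangle rainbow, so colors advance by $1 \bmod 3$ along the walk and the closing hyperedge of $\Cm$ is impossible when $3\nmid\l$. If you want to salvage your approach, the cleanest fix is to import that same reduction: you only need \emph{one} triangle at which all your finitely many $F_{\bm{\sigma},\bm{\epsilon}}$ are nonzero (or, better, at which no exact realization exists), rather than proving non-vanishing as functions on the whole angle space.
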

\begin{proof}
Using the fact that every algebraic set, which is not the whole space, has measure $0$, B\'ar\'any and F\"uredi~\cite{barany2018almost} showed that in order to prove that a $3$-graph $\mH$ is in $\fF_{tri}$, we only need to prove that there exists \emph{one} triangle $\tri$ such that $\mH(P, \tri)$ is $\mH$-free, for every point set $P \subseteq \R^2$, see their proof of Lemma 9.2. See also the proof of Lemma 2.3 in~\cite{balogh2022maximum}. Therefore, denoting by $\tri_0$ the equilateral triangle, we only need to prove that $\mH(P,\tri_0)$ is $\Cm$-free, for every integer $\l$, where $4 \le \l \le C_{tri}$ and $\l$ is not divisible by $3$, and point set $P \in \R^2$. This follows from the following simple coloring argument.

Assume for contradiction that there are a point set $P \subseteq \R^2$ and an integer $\l \ge 4$ not divisible by $3$ such that $\mH(P, \tri_0)$ contains
$v_0v_1\ldots v_{\l-1}$ as a copy of $\Cm$. Without loss of generality, we can assume that $v_0 = (0,0)$, $v_1 = (1,0)$, and $v_2 = (1/2, \sqrt{3}/2)$. Let $P_0$ be the point set $\{x v_1 + y v_2: x,y\in \Z\}$. Color $P_0$ with colors in $\{0,1,2\}$ as follows. For every point $xv_1+yv_2 \in P_0$, color it with color $c \in \{0,1,2\}$, where $c \equiv x + 2y \pmod{3}$.
Note that every equilateral triangle with side length one formed by points in $P_0$ is a rainbow, i.e., its vertices have all three colors.
Now, $v_0 = (0,0)$ has color $0$ and $v_1 = (1,0)$ has color $1$.
Since $\{v_i, v_{i+1}, v_{i+2}\} \in \mH(P, \tri_0)$ for $0\le i \le \l-3$, we have, by induction, $v_i \in P_0$, $v_i, v_{i+1}, v_{i+2}$ form an equilateral triangle with side length one,
and $v_i$ has color $c$, where $c \equiv i \pmod{3}$.
Then, $v_{\l-2}, v_{\l-1}, v_0$ also need to form an equilateral triangle with side length one, so it is a rainbow. However, since $3\nmid \l$, we have that one of $v_{\l-2}, v_{\l-1}$ has color $0$, the same color as vertex $v_0$, a contradiction. \qedhere
\end{proof}

\section{Concluding remarks}
The constant $\L$ in Theorem~\ref{thm::main} given by the current proof can be large, due to the following two reasons.

For the stability result in Section~\ref{sec::Sta}, we use a regularity lemma, Theorem~\ref{thm::remLem}, which can make the dependence between $\eps_1,\eps_2$ and $\delta$ very poor in Proposition~\ref{pro::sta}. We remark that using the regularity lemma is not really necessary: we can instead use a similar averaging argument as in the proof of Claim~\ref{cla::numT5}. This would make the proof of  Proposition~\ref{pro::sta} much longer and more technical, and we still cannot make $\L$ close to $5$ (due to the reason in the next paragraph).

As mentioned at the beginning of Section~\ref{sec::Pro}, the bottleneck in our proof is about the following problem.
\begin{problem}
For a maximum $n$-vertex $\fCm$-free $3$-graph $\mH$, how many hyperedges do we need to remove to make $\mH$ free of $\ifCm$?
\end{problem}
In Lemma~\ref{lem::rem}, our bound is $O(  n^3/\sqrt{\L})$. Any improvement to this can lead to a significant improvement for the constant $\L$ in Theorem~\ref{thm::main}. We note that Lemma~\ref{lem::rem} is the only place where we need $\L$ to be large; for all other proofs, we actually only use that the forbidden family includes $\K_4^-$ and $\Cm[5]$. If it can be shown that at most $cn^3$ hyperedges are needed to be removed from every maximum $\Cm[5]$-free $3$-graph $\mH$ to make $\mH$ free of $\ifCm$, where $c$ is small enough, then the same proof gives $\pi(\Cm[5]) = 1/4$.

\section*{Acknowledgment}
The authors are grateful to Bernard Lidick\'y and Zolt\'{a}n F\"{u}redi for helpful communication. We would also like to thank the anonymous
referees for their helpful comments.


\end{document}